\documentclass[a4paper,twoside]{article}
\usepackage{a4}
\usepackage{amssymb}
\usepackage{amsmath}
\usepackage{upref}
\usepackage[active]{srcltx}
\usepackage[pagebackref,colorlinks,citecolor=blue,linkcolor=blue,urlcolor=blue]{hyperref}
\usepackage[dvipsnames]{color}
\allowdisplaybreaks[2] 
%
%
%
\newcount\minutes \newcount\hours
\hours=\time
\divide\hours 60
\minutes=\hours
\multiply\minutes -60
\advance\minutes \time
\newcommand{\klockan}{\the\hours:{\ifnum\minutes<10 0\fi}\the\minutes}
\newcommand{\tid}{\today\ \klockan}
\newcommand{\prtid}{\smash{\raise 10mm \hbox{\LaTeX ed \tid}}}
\renewcommand{\prtid}{}
%
%
\makeatletter
\pagestyle{headings}
\headheight 10pt
\def\sectionmark#1{} 
\def\subsectionmark#1{}
\newcommand{\sectnr}{\ifnum \c@secnumdepth >\z@
                 \thesection.\hskip 1em\relax \fi}
\def\@evenhead{\footnotesize\rm\thepage\hfil\leftmark\hfil\llap{\prtid}}
\def\@oddhead{\footnotesize\rm\rlap{\prtid}\hfil\rightmark\hfil\thepage}
\def\tableofcontents{\section*{Contents} 
 \@starttoc{toc}}
\makeatother
%
%
\makeatletter
\def\@biblabel#1{#1.}
\makeatother
%
%
%
\makeatletter
\let\Thebibliography=\thebibliography
\renewcommand{\thebibliography}[1]{\def\@mkboth##1##2{}\Thebibliography{#1}
\addcontentsline{toc}{section}{References}
\frenchspacing 
\setlength{\@topsep}{0pt}
\setlength{\itemsep}{0pt}%
\setlength{\parskip}{0pt plus 2pt}%
}
\makeatother
%
%
\makeatletter
\def\mdots@{\mathinner.\nonscript\!.%
 \ifx\next,.\else\ifx\next;.\else\ifx\next..\else
 \nonscript\!\mathinner.\fi\fi\fi}
\let\ldots\mdots@
\let\cdots\mdots@
\let\dotso\mdots@
\let\dotsb\mdots@
\let\dotsm\mdots@
\let\dotsc\mdots@
\def\vdots{\vbox{\baselineskip2.8\p@ \lineskiplimit\z@
    \kern6\p@\hbox{.}\hbox{.}\hbox{.}\kern3\p@}}
\def\ddots{\mathinner{\mkern1mu\raise8.6\p@\vbox{\kern7\p@\hbox{.}}%
    \raise5.8\p@\hbox{.}\raise3\p@\hbox{.}\mkern1mu}}
\makeatother
%
%
\makeatletter
\let\Enumerate=\enumerate
\renewcommand{\enumerate}{\Enumerate%
\setlength{\@topsep}{0pt}
\setlength{\itemsep}{0pt}%
\setlength{\parskip}{0pt plus 1pt}%
\renewcommand{\theenumi}{\textup{(\alph{enumi})}}%
\renewcommand{\labelenumi}{\theenumi}%
}
\let\endEnumerate=\endenumerate
\renewcommand{\endenumerate}{\endEnumerate\unskip}
\makeatother
%
%
\makeatletter
\def\@seccntformat#1{\csname the#1\endcsname.\quad}
\makeatother
%
%
\newcommand{\authortitle}[2]{\author{#1}\title{#2}\markboth{#1}{#2}}
%
%
\newcommand{\auth}[2]{{#1, #2.}}
\newcommand{\art}[6]{{\sc #1, \rm #2, \it #3\/ \bf #4 \rm (#5), \mbox{#6}.}}
\newcommand{\artprep}[3]{{\sc #1, \rm #2, \rm #3.}}
\newcommand{\arttoappear}[3]{{\sc #1, \rm #2, to appear in \it #3}}
\newcommand{\book}[3]{{\sc #1, \it #2, \rm #3.}}
\newcommand{\AND}{{\rm and }}
%
%
\RequirePackage{amsthm}
\newtheoremstyle{descriptive}%
  {\topsep}   
  {\topsep}   
  {\rmfamily} 
  {}          
  {\bfseries} 
  {.}         
  { }         
  {}          
\newtheoremstyle{propositional}%
  {\topsep}   
  {\topsep}   
  {\itshape}  
  {}          
  {\bfseries} 
  {.}         
  { }         
  {}          
\newtheoremstyle{remarkstyle}%
  {\topsep}   
  {\topsep}   
  {\rmfamily}  
  {}          
  {\itshape} 
  {.}         
  { }         
  {}          
\theoremstyle{propositional}
\newtheorem{thm}{Theorem}[section]
\newtheorem{prop}[thm]{Proposition}
\newtheorem{lem}[thm]{Lemma}
\newtheorem{cor}[thm]{Corollary}
\theoremstyle{descriptive}
\newtheorem{deff}[thm]{Definition}
\newtheorem{example}[thm]{Example}
\newtheorem{remark}[thm]{Remark}
%
%
%
%
%
\makeatletter
\renewenvironment{proof}[1][\proofname]{\par
  \pushQED{\qed}%
  \normalfont
  \trivlist
  \item[\hskip\labelsep
        \itshape
    #1\@addpunct{.}]\ignorespaces
}{%
  \popQED\endtrivlist\@endpefalse
}
\makeatother
%
%
%
%
%
\def\vint{\mathop{\mathchoice%
          {\setbox0\hbox{$\displaystyle\intop$}\kern 0.22\wd0%
           \vcenter{\hrule width 0.6\wd0}\kern -0.82\wd0}%
          {\setbox0\hbox{$\textstyle\intop$}\kern 0.2\wd0%
           \vcenter{\hrule width 0.6\wd0}\kern -0.8\wd0}%
          {\setbox0\hbox{$\scriptstyle\intop$}\kern 0.2\wd0%
           \vcenter{\hrule width 0.6\wd0}\kern -0.8\wd0}%
          {\setbox0\hbox{$\scriptscriptstyle\intop$}\kern 0.2\wd0%
           \vcenter{\hrule width 0.6\wd0}\kern -0.8\wd0}}%
          \mathopen{}\int}
%
%
%
%
%
\RequirePackage{graphicx}\relax
\newcommand{\limplus}{{\mathchoice{\vcenter{\hbox{$\scriptstyle +$}}}
  {\vcenter{\hbox{$\scriptstyle +$}}}
  {\vcenter{\hbox{$\scriptscriptstyle +$}}}
  {\vcenter{\hbox{\scalebox{0.8}{$\scriptscriptstyle +$}}}}
}}
\newcommand{\limminus}{{\mathchoice{\vcenter{\hbox{$\scriptstyle -$}}}
  {\vcenter{\hbox{$\scriptstyle -$}}}
  {\vcenter{\hbox{$\scriptscriptstyle -$}}}
  {\vcenter{\hbox{\scalebox{0.8}{$\scriptscriptstyle -$}}}}
}}
\newcommand{\limpm}{{\mathchoice{\vcenter{\hbox{$\scriptstyle \pm$}}}
  {\vcenter{\hbox{$\scriptstyle \pm$}}}
  {\vcenter{\hbox{$\scriptscriptstyle \pm$}}}
  {\vcenter{\hbox{\scalebox{0.8}{$\scriptscriptstyle \pm$}}}}
}}
\newcommand{\limmp}{{\mathchoice{\vcenter{\hbox{$\scriptstyle \mp$}}}
  {\vcenter{\hbox{$\scriptstyle \mp$}}}
  {\vcenter{\hbox{$\scriptscriptstyle \mp$}}}
  {\vcenter{\hbox{\scalebox{0.8}{$\scriptscriptstyle \mp$}}}}
}}
{\catcode`p =12 \catcode`t =12 \gdef\eeaa#1pt{#1}}      
\def\accentadjtext#1{\setbox0\hbox{$#1$}\kern   
                \expandafter\eeaa\the\fontdimen1\textfont1 \ht0 }
\def\accentadjscript#1{\setbox0\hbox{$#1$}\kern 
                \expandafter\eeaa\the\fontdimen1\scriptfont1 \ht0 }
\def\accentadjscriptscript#1{\setbox0\hbox{$#1$}\kern   
                \expandafter\eeaa\the\fontdimen1\scriptscriptfont1 \ht0 }
\def\accentadjtextback#1{\setbox0\hbox{$#1$}\kern       
                -\expandafter\eeaa\the\fontdimen1\textfont1 \ht0 }
\def\accentadjscriptback#1{\setbox0\hbox{$#1$}\kern     
                -\expandafter\eeaa\the\fontdimen1\scriptfont1 \ht0 }
\def\accentadjscriptscriptback#1{\setbox0\hbox{$#1$}\kern 
                -\expandafter\eeaa\the\fontdimen1\scriptscriptfont1 \ht0 }
\def\itoverline#1{{\mathsurround0pt\mathchoice
        {\rlap{$\accentadjtext{\displaystyle #1}
                \accentadjtext{\vrule height1.593pt}
                \overline{\phantom{\displaystyle #1}
                \accentadjtextback{\displaystyle #1}}$}{#1}}
        {\rlap{$\accentadjtext{\textstyle #1}
                \accentadjtext{\vrule height1.593pt}
                \overline{\phantom{\textstyle #1}
                \accentadjtextback{\textstyle #1}}$}{#1}}
        {\rlap{$\accentadjscript{\scriptstyle #1}
                \accentadjscript{\vrule height1.593pt}
                \overline{\phantom{\scriptstyle #1}
                \accentadjscriptback{\scriptstyle #1}}$}{#1}}
        {\rlap{$\accentadjscriptscript{\scriptscriptstyle #1}
                \accentadjscriptscript{\vrule height1.593pt}
                \overline{\phantom{\scriptscriptstyle #1}
                \accentadjscriptscriptback{\scriptscriptstyle #1}}$}{#1}}}}
%
%
\newcommand{\setm}{\setminus}
\renewcommand{\emptyset}{\varnothing}
%
%
\DeclareMathOperator{\diam}{diam}
\DeclareMathOperator{\capp}{cap}
\newcommand{\cpXbt}{\capp_{p,\mu}^{\Xbt}}
\newcommand{\cpX}{\capp_{p,\mu}^{X}}
\newcommand{\cpXhat}{\capp_{p,\muhat}^{\Xhat}}
\newcommand{\cpG}{\capp_{p,\muhat}^{G}}
\newcommand{\ctXhat}{\capp_{t,\muhat}^{\Xhat}}
\newcommand{\cpRn}{\capp_{p,\mu}^{\R^n}}
\newcommand{\cpRt}{\capp_{p,\mu}^{\XRtp}}
\newcommand{\cpRnw}{\capp_{p,w}^{\R^n}}
\newcommand{\cpRneps}{\capp_{p,w_\eps}^{\R^n}}
\newcommand{\coneRn}{\capp_{1,\mu}^{\R^n}}
\newcommand{\cpXplus}{\capp_{p,\mu}^{\Xplus}}
\newcommand{\cpXbtp}{\capp_{p,\mu}^{\Xbtp}}
\newcommand{\cpXbtptwo}{\capp_{p,\mu}^{\XRtp}}
\newcommand{\cpXpm}{\capp_{p,\mu}^{\Xpm}}
\newcommand{\cpXminus}{\capp_{p,\mu}^{\Xminus}}

\DeclareMathOperator{\dist}{dist}
\DeclareMathOperator{\spt}{supp}
\newcommand{\supp}{\spt}
\DeclareMathOperator*{\essinf}{ess\,inf}
\newcommand{\simge}{\gtrsim}
\newcommand{\simle}{\lesssim}
%
%
\newcommand{\al}{\alpha}
\newcommand{\alp}{\alpha}
\newcommand{\be}{\beta}
\newcommand{\ga}{\gamma}
\newcommand{\eps}{\varepsilon}
\newcommand{\la}{\lambda}
\newcommand{\La}{\Lambda}
\newcommand{\om}{\omega}
\newcommand{\Om}{\Omega}
\newcommand{\clB}{\itoverline{B}}
\newcommand{\p}{{$p\mspace{1mu}$}}
\newcommand{\R}{\mathbf{R}}
\newcommand{\Xplus}{X_\limplus}  
\newcommand{\Xminus}{X_\limminus} 
\newcommand{\dplus}{d_\limplus}  
\newcommand{\dminus}{d_\limminus}  
\newcommand{\Bplus}{B_\limplus}  
\newcommand{\Bminus}{B_\limminus}  
\newcommand{\Bpm}{B_\limpm}  
\newcommand{\Bpplus}{B'_\limplus}  
\newcommand{\Bpminus}{B'_\limminus}  
\newcommand{\Br}{B_r}  
\newcommand{\Brplus}{B_{r}^{\limplus}}  
\newcommand{\Brminus}{B_{r}^{\limminus}}  
\newcommand{\Brpm}{B_{r}^{\limpm}}  
\newcommand{\Xpm}{X_\limpm} 
\newcommand{\Xbt}{X_{\R^n}}
\newcommand{\Xbtp}{X_{\R^n}^{\limplus}}
\newcommand{\Xbtm}{X_{\R^n}^{\limminus}}
\newcommand{\Xbtpm}{X_{\R^n}^{\limpm}}
\newcommand{\xpm}{x_\limpm} 
\newcommand{\xplus}{x_\limplus} 
\newcommand{\xminus}{x_\limminus} 
\newcommand{\gat}{\widetilde{\ga}}
\newcommand{\uh}{\hat{u}}
\newcommand{\gh}{\hat{g}}
\newcommand{\qmusymb}{Q}
\newcommand{\qmu}{\qmusymb_{x_0,R_0}^{X,\mu}}
\newcommand{\qmuo}{\qmusymb_{0,R_0}^{\XRtp,\mu}}
\newcommand{\qmuoRn}{\qmusymb_{0,R_0}^{\R^n,\mu}}
\newcommand{\qmuoXbtp}{\qmusymb_{0,R_0}^{\Xbtp,\mu}}
\newcommand{\XRtp}{X_{\R^2}^{\limplus}}
\newcommand{\qmup}{\qmusymb_{x_0,R_0}^{\Xplus,\mu}}
\newcommand{\qmum}{\qmusymb_{x_0,R_0}^{\Xminus,\mu}}
\newcommand{\qmupm}{\qmusymb_{x_0,R_0}^{\Xpm,\mu}}
\newcommand{\qmuhat}{\qmusymb_{x_0,R_0}^{\Xhat,\muhat}}
%
%
\newcommand{\Np}{N^{1,p}}
\newcommand{\gt}{\tilde{g}}
\newcommand{\Bt}{\widetilde{B}}
\newcommand{\xt}{\tilde{x}}
\newcommand{\Xhat}{\widehat{X}}
\newcommand{\wt}{\widetilde{w}}
\newcommand{\what}{\widehat{w}}
\newcommand{\ro}{{\rho}}
\newcommand{\muhat}{\hat{\mu}}
\newcommand{\Bhatr}{{\widehat{B}_r}}
\newcommand{\Bhat}{{\widehat{B}}}

\newcounter{saveenumi}
%
%
\numberwithin{equation}{section}
\newcommand{\eqv}{\ensuremath{\mathchoice{\quad \Longleftrightarrow \quad}
    {\Leftrightarrow}{\Leftrightarrow}{\Leftrightarrow}} }
\newcommand{\imp}{\ensuremath{\mathchoice{\quad \Longrightarrow \quad}
    {\Rightarrow}{\Rightarrow}{\Rightarrow}} }
\newenvironment{ack}{\medskip{\it Acknowledgement.}}{}

\begin{document}

\authortitle{Anders Bj\"orn, Jana Bj\"orn
    and Andreas Christensen}
{Poincar\'e inequalities and $A_p$ weights on bow-ties}

\author{
Anders Bj\"orn \\
\it\small Department of Mathematics, Link\"oping University, SE-581 83 Link\"oping, Sweden\\
\it \small anders.bjorn@liu.se, ORCID\/\textup{:} 0000-0002-9677-8321
\\
\\
Jana Bj\"orn \\
\it\small Department of Mathematics, Link\"oping University, SE-581 83 Link\"oping, Sweden\\
\it \small jana.bjorn@liu.se, ORCID\/\textup{:} 0000-0002-1238-6751
\\
\\
Andreas Christensen \\
\it\small Department of Mathematics, Link\"oping University, SE-581 83 Link\"oping, Sweden\\
\it \small andreas.christensen@liu.se, ORCID\/\textup{:} 0000-0002-0087-2488
\\
}

\date{Preliminary date, \today}

\maketitle

\noindent{\small
 {\bf Abstract}. 
A metric space $X$ is called a \emph{bow-tie} if it can be written as
$X=\Xplus \cup \Xminus$,
where $\Xplus \cap \Xminus=\{x_0\}$ and  $\Xpm \ne \{x_0\}$ are closed subsets of $X$.
We show that a doubling measure $\mu$ 
on $X$ supports a 
$(q,p)$--Poincar\'e inequality on $X$
if and only if
$X$ satisfies a quasiconvexity-type condition,
$\mu$ supports a $(q,p)$-Poincar\'e inequality on both $\Xplus$ and
$\Xminus$, and a variational \p-capacity condition holds.
This capacity condition is in turn characterized by a sharp measure decay
condition at $x_0$.

In particular, we
study the bow-tie $\Xbt$ consisting of the positive and negative hyperquadrants
in $\R^n$ equipped with a radial doubling weight and characterize the validity of
the \p-Poincar\'e inequality on $\Xbt$ in several ways.
For such weights, we also give a general formula for the capacity of annuli
around the origin.
}

\medskip

\noindent {\small \emph{Key words and phrases}:
Bow-tie, 
capacity,  
doubling measure,  
metric space, 
Muckenhoupt $A_p$-weight,
\p-admissible weight, 
Poincar\'e inequality,
radial weight, 
variational capacity.
}

\medskip

\noindent {\small \emph{Mathematics Subject Classification} (2020):
Primary:
26D10; 
 Secondary:
 30L15, 
 31C15, 
31C45, 
31E05,  
46E36. 
}

\section{Introduction}

The simplest example of a bow-tie is the union
of the first and third closed quadrants in $\R^2$,
which (when intersected with the diamond
$\{(x_1,x_2):|x_1|+|x_2|\le1\}$) resembles the shape
of a bow-tie.

In the generality of metric spaces, we say that $X=(X,d)$ is a \emph{bow-tie}
if there is a point $x_0 \in X$ which splits it in 
the form $X=\Xplus \cup \Xminus$,
where $\Xplus \cap \Xminus=\{x_0\}$ 
and $\Xpm \ne \{x_0\}$ are closed subsets of $X$.
We do not require $X$ to be complete.
$X$ is equipped with a positive complete  Borel  measure $\mu$ 
such that 
\begin{equation} \label{eq-mu-finite}
  0<\mu(B)<\infty
\quad \text{for all balls } B \subset X.
\end{equation}
We also assume that $1 \le p,q < \infty$.

Our aim is to characterize when Poincar\'e inequalities hold
on $X$ in terms of their validity on $\Xpm$.
It turns out that there are two extra conditions involved.
Here $B_r=B(x_0,r)$ is the open ball centred at $x_0$ with radius $r$,
and $\Brpm=\Br \cap \Xpm$.

\begin{thm} \label{thm-general-PI}
Assume that $\mu$ is doubling on $X$.
Then $\mu$  supports a $(q,p)$-Poincar\'e inequality on $X$ if and only if
the following conditions hold\/\textup{:}
\begin{enumerate}
\renewcommand{\theenumi}{\textup{(\roman{enumi})}}%
\item \label{c-a}
$\mu$  supports a $(q,p)$-Poincar\'e inequality on $\Xplus$ and on
$\Xminus$\textup{;}
\item \label{c-b}
there is $\Lambda$ so that
$d(\xplus,x_0) + d(x_0,\xminus) \le \Lambda d(\xplus,\xminus)$
for all $\xpm  \in \Xpm$\/\textup{;}
\item \label{c-c-both}
for all  
$0 < r < \tfrac{1}{4}\min\{\diam \Xplus, \diam \Xminus\}$, 
\begin{equation}        \label{eq-cap-cond-r-p-mu}
\cpXplus(\{x_0\}, \Brplus)  \simeq r^{-p}\mu(\Brplus) 
\quad \text{and} \quad 
\cpXminus(\{x_0\}, \Brminus)  \simeq r^{-p}\mu(\Brminus).
\end{equation}
\setcounter{saveenumi}{\value{enumi}}
\end{enumerate}
\medskip

If, in addition, $p>1$ and 
there is a locally compact open set $G \ni x_0$, 
then 
condition~\ref{c-c-both} can equivalently be replaced by
\begin{enumerate}
\renewcommand{\theenumi}{\textup{(\roman{enumi}$'$)}}%
\setcounter{enumi}{\value{saveenumi}}
\addtocounter{enumi}{-1}
\item \label{c-d}
  $p > \max\{\qmup,\qmum\}$, where $R_0=\tfrac{1}{4}\min\{\diam \Xplus, \diam \Xminus\}$
and 
\begin{equation*}  
 \qmupm := \inf \Bigl\{Q>0 :
 \frac{\mu(B_\rho^{\limpm})}{\mu(B_r^{\limpm})}
     \simge  \Bigl(\frac{\rho}{r}\Bigr)^{Q} 
     \text{ for all } 0 < \rho < r <
     R_0   
        \Bigr\}.
\end{equation*}
\end{enumerate}
\end{thm}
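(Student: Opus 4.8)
The plan is to prove the first equivalence by localising everything to balls centred at $x_0$ — where the entire interaction between the two wings lives — and to treat $\ref{c-c-both}\Leftrightarrow\ref{c-d}$ as a separate, self-contained statement about variational \p-capacities of annuli. Two preliminary reductions come first. A \emph{gluing lemma}: if $u_{+}$ is a function on $\Xplus$ with $\Xplus$-upper gradient $g_{+}$, then extending $u_{+}$ to $X$ by the constant $u_{+}(x_0)$ on $\Xminus$ yields a function on $X$ with upper gradient $g_{+}\chi_{\Xplus}$ (cut each curve at its visits to $x_0$; on $\Xminus$ the extension is constant), and conversely restriction to $\Xpm$ preserves upper gradients; along the way one checks that $\mu$ is doubling on each $\Xpm$ (automatic away from $x_0$, and at $x_0$ it is exactly the content of $\ref{c-c-both}$/$\ref{c-d}$). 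A \emph{geometric reduction}: under $\ref{c-b}$ one has $d(\xplus,\xminus)\ge\Lambda^{-1}d(\xplus,x_0)$, so $B(x,r)\subseteq\Xplus$ whenever $x\in\Xplus$ and $r\le\Lambda^{-1}d(x,x_0)$ (similarly for $\Xminus$); hence a ball ``far from $x_0$'' lies in a single wing, where $\ref{c-a}$ applies directly, and any ball meeting both wings is, up to a dilation by a factor depending only on $\Lambda$, comparable to a ball $\Br=B(x_0,r)$.

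\emph{Necessity.} Assume the $(q,p)$-Poincar\'e inequality on $X$. For $\ref{c-a}$, feed the gluing-extension of a function on $\Xplus$ into the inequality on balls centred in $\Xplus$; on a ball $\Br$ the extension is constant on $\Xminus$, so $|u_{\Br}-u(x_0)|\le\vint_{\Brplus}|u-u(x_0)|\,d\mu$ and the $\Xminus$-part drops out, leaving the inequality on $\Xplus$. For $\ref{c-b}$, if it fails, pick $\xplus^{j}\in\Xplus$, $\xminus^{j}\in\Xminus$ with $(d(\xplus^{j},x_0)+d(x_0,\xminus^{j}))/d(\xplus^{j},\xminus^{j})\to\infty$; the function that is $0$ on the wing containing the point nearer $x_0$ and, on the other wing, a radial profile rising from $0$ on a small ball about $x_0$ to $1$ outside a ball of radius comparable to the larger of the two distances — the profile chosen to almost minimise the $p$-energy — contradicts the inequality on a ball about $x_0$, using reverse doubling of $\mu$ at $x_0$ (a consequence of the connectedness forced by the inequality). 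For $\ref{c-c-both}$, only the bound $\cpXplus(\{x_0\},\Brplus)\gtrsim r^{-p}\mu(\Brplus)$ is at issue (the reverse bound is the linear radial test function); if it failed there would be $r_j<R_0$ and admissible $\phi_j$ for the condenser $(\{x_0\},\Brplus)$ with $\int_{\Xplus}g_{\phi_j}^{p}\,d\mu\le 2\eps_j r_j^{-p}\mu(B_{r_j}^{\limplus})$, $\eps_j\to0$; gluing $1-\phi_j$ to $X$ by $0$ on $\Xminus$ and inserting it into the inequality on $B(x_0,2r_j)$ forces the right-hand side below $C\eps_j^{1/p}\to0$, while the set on which this function equals $1$ carries (by reverse doubling of $\mu$ on $\Xplus$ at $x_0$) a fixed fraction of $\mu(B(x_0,2r_j))$, keeping the left-hand side bounded below — a contradiction; $\Xminus$ is symmetric.

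\emph{Sufficiency.} Granting $\ref{c-a}$--$\ref{c-c-both}$, by the geometric reduction it suffices to prove the $(q,p)$-Poincar\'e inequality for a ball $\Br=B(x_0,r)$. Replacing $u_{\Br}$ by $u(x_0)$ at the cost of a constant and splitting $\Br=\Brplus\cup\Brminus$, it is enough to bound $\int_{\Brpm}|u-u(x_0)|^{q}\,d\mu$ on each wing; since $\ref{c-a}$ controls $\int_{\Brpm}|u-u_{\Brpm}|^{q}\,d\mu$ directly, everything comes down to the single chaining estimate $|u_{\Brpm}-u(x_0)|\lesssim r\,(\vint_{\lambda\Brpm}g^{p}\,d\mu)^{1/p}$. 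This is where $\ref{c-c-both}$ is used: summing $|u_{B^{\limpm}_{2^{-k}r}}-u_{B^{\limpm}_{2^{-k-1}r}}|\lesssim 2^{-k}r\,(\vint_{\lambda B^{\limpm}_{2^{-k}r}}g^{p}\,d\mu)^{1/p}$ down to $x_0$ converges precisely because $\ref{c-c-both}$ self-improves to a \emph{strict} measure-decay bound $\mu(B^{\limpm}_{\rho})/\mu(B^{\limpm}_{r})\gtrsim(\rho/r)^{p-\eps}$ for some $\eps>0$; for $p>1$ I would obtain this from the series law for the concentric condensers $(\clB_{\rho}^{\limpm},B^{\limpm}_{r})$ — whose capacities $\ref{c-c-both}$ pins to $\simeq r^{-p}\mu(B^{\limpm}_{r})$ also for $\rho\le r/2$ — combined with the elementary fact that a quantity multiplicative along dyadic scales with uniformly summable dyadic sums decays geometrically. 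For $p=1$ the series law is unavailable and this convergence needs a separate, coarea/perimeter-type argument.

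\emph{The equivalence $\ref{c-c-both}\Leftrightarrow\ref{c-d}$}, assuming $p>1$ and the presence of a locally compact open $G\ni x_0$, is self-contained on each $\Xpm$. Local compactness lets one pass between $\cpXplus$ and the capacity relative to $G$ and ensures well-behaved capacitary potentials; $p>1$ makes available the sharp two-sided estimate $\cpXplus(\clB_{\rho}^{\limplus},B^{\limplus}_{r})\simeq\bigl(\int_{\rho}^{r}(t^{p}/\mu(B^{\limplus}_{t}))^{1/(p-1)}\,dt/t\bigr)^{1-p}$. Given it: if $p>\qmup$, fix $\qmup<Q<p$, use $\mu(B^{\limplus}_{t})\gtrsim(t/r)^{Q}\mu(B^{\limplus}_{r})$ to evaluate the integral and let $\rho\to0^{+}$, obtaining $\cpXplus(\{x_0\},B^{\limplus}_{r})\simeq r^{-p}\mu(B^{\limplus}_{r})$, i.e.\ $\ref{c-c-both}$; conversely, if $p\le\qmup$ then for every $Q<p$ there are scales at which $\mu(B^{\limplus}_{\rho})$ decays faster than $(\rho/r)^{Q}$, and feeding this into the same integral gives $\cpXplus(\{x_0\},B^{\limplus}_{r})=o(r^{-p}\mu(B^{\limplus}_{r}))$ along those scales, so $\ref{c-c-both}$ fails; $\Xminus$ is symmetric. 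The main obstacle throughout is the uniform-in-scale behaviour at the gluing point: showing that $\ref{c-c-both}$ is \emph{exactly} the right capacitary thickness at $x_0$ — strong enough to force the chaining to converge (sufficiency) and forced by the Poincar\'e inequality on $X$ (necessity) — even though the two wings may carry very different doubling measures near $x_0$. Converting $\ref{c-c-both}$ into the strict decay $(\rho/r)^{p-\eps}$ and disposing of $p=1$ is where the real work lies; $\ref{c-d}$ then merely repackages the exponent, once $p>1$ and local compactness legitimise the sharp capacity formula.
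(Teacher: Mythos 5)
Your necessity half follows the paper's route: condition (i) by extending a function on one wing by a constant across $x_0$, condition (iii) by feeding $1-\phi_j$ (extended by $0$ to the other wing) into the Poincar\'e inequality on $B(x_0,2r_j)$ and using reverse doubling to keep the left-hand side bounded below. (Your argument for (ii) is more elaborate than necessary — the characteristic function $\chi_{\Xplus}$ has upper gradient $0$ on any ball not containing $x_0$, which kills the inequality directly — and your derivation of (i) glosses over the comparison $\mu(B)\simeq\mu(B\cap\Xpm)$, which in the paper needs a case analysis because $\Xplus$ and $\Xminus$ may have very different diameters; but these are repairable.) The sufficiency half is where the proposal genuinely fails. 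Your chaining argument needs the sum $\sum_k 2^{-k}r\,\mu(B^{\limpm}_{2^{-k}r})^{-1/p}$ to converge, which requires the \emph{strict} decay $\mu(B^{\limpm}_\rho)/\mu(B^{\limpm}_r)\gtrsim(\rho/r)^{p-\eps}$; condition (iii) only yields the exponent $p$ elementarily (test the condenser with the linear cutoff), for which the dyadic sum diverges. The upgrade from exponent $p$ to $p-\eps$ is exactly the equivalence (iii)$\Rightarrow$(iii$'$), which the paper proves as Theorem 1.3 by completing $X$, doubling it into an auxiliary bow-tie, and invoking the Keith--Zhong self-improvement theorem — and that proof \emph{uses} the present theorem, so building your sufficiency proof on it would be circular. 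Your "elementary fact about multiplicative quantities with summable dyadic sums" is precisely the borderline $p=\qmu$ issue that the paper flags as requiring deep machinery; you cannot treat it as routine. The paper avoids the whole detour: with $u(x_0)=0$, Maz$'$ya's capacitary inequality gives $\bigl(\vint_{\Brpm}|u|^q\,d\mu\bigr)^{1/q}\lesssim\bigl(\cpXpm(\{x_0\},\Brpm)^{-1}\int_{\la\Brpm}g^p\,d\mu\bigr)^{1/p}$, and (iii) converts this directly into the Poincar\'e inequality on $\Brpm$ — no chaining, no strict decay, and it works for $p=1$ (which you admit your approach does not cover, even though the theorem includes $p=1$). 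You also never handle balls centred at $x_0$ of radius $\ge\tfrac14\min\{\diam\Xplus,\diam\Xminus\}$, where (iii) gives no information; the paper needs its gluing lemma there, plus a separate measure-ratio estimate when $q>p$.

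The proposed proof of (iii)$\Leftrightarrow$(iii$'$) is also gapped in the hard direction. The direction $p>\qmup\Rightarrow$(iii) via the annulus-capacity formula is fine and is how the paper cites \cite{BBLeh1}. But for the converse you argue that if $p\le\qmup$ then "there are scales at which $\mu(B^{\limplus}_\rho)$ decays faster than $(\rho/r)^Q$" and that feeding this into the integral formula makes the capacity $o(r^{-p}\mu(B^{\limplus}_r))$. Failure of the exponent-$Q$ decay at some pairs of scales does not let you estimate $\int_\rho^r\bigl(t^p/\mu(B^{\limplus}_t)\bigr)^{1/(p-1)}\,dt/t$ from below; this only rules out $p<\qmup$ and leaves the borderline $p=\qmup$ open, which is the entire difficulty. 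Settling it is the content of Theorem 1.3, whose proof (completion, the doubled space $X_0$, Keith--Zhong, and Rajala's uniform-domain approximation for the unbounded case) has no counterpart in your outline. Moreover, the "sharp two-sided estimate" for $\cpXplus(\clB_\rho^{\limplus},B_r^{\limplus})$ that you take as given is itself a theorem requiring the Poincar\'e inequality and doubling; it is not supplied by local compactness alone.
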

\medskip

Here $\cpXpm$ denotes the capacity with respect to $\Xpm$ as the
  underlying space.
Note that the dilation constants in the Poincar\'e inequalities on $X$
and $\Xpm$ are in general different, see Remark~\ref{rmk-la}.
It is well known that sets of capacity zero
cannot separate a space supporting a Poincar\'e inequality,
see \cite[Lemma~4.6]{BBbook}.
So requiring
\begin{equation} \label{eq-cp>0}
\cpXplus( \{x_0\}, \Brplus)  >0 
\quad \text{and} \quad 
\cpXminus( \{x_0\}, \Brminus)  >0
\quad \text{for small }  r>0
\end{equation}
is a necessary condition for the validity of a \p-Poincar\'e inequality
on $X$.
It has been folklore that it might also be sufficient
(see e.g.~Korte~\cite[p.~102]{Korte}).
However, 
Example~\ref{ex-3.2} 
shows that 
the range of $p>1$ so that \ref{c-c-both} holds
can be considerably smaller than the range
of $p$ for which \eqref{eq-cp>0} holds.
In this example, $d\mu=w\,dx$ is doubling and supports a 1-Poincar\'e inequality
on $\R^2$ with a radial weight~$w$ 
(and we consider the Euclidean bow-tie $\Xbt$ as in Theorem~\ref{thm-intro-radial} below).
For $p=1$ we do not know if \eqref{eq-cp>0} can hold while
\ref{c-c-both} fails.

When specializing to bow-ties in $\R^n$
equipped with radial weights we obtain the following
characterizations.
Here $x_0=(0,\ldots,0)$ is the origin.

\begin{thm} \label{thm-intro-radial}
Let $\Xbt= \Xbtp \cup  \Xbtm$, where
 \[
  \Xbtpm=\{(x_1, \ldots, x_n)\in\R^n:\pm x_j \geq 0, \ j=1,\ldots,n \}
   \quad \text{and} \quad 
   n \ge 1.
\]
Also let $d\mu=w\,dx$ be a doubling measure on $\R^n$, where
$w(x)=w(|x|)$ is a radial weight.
Then the following are equivalent\/\textup{:}
\begin{enumerate}
\item \label{f-3}
  $\mu$ supports a \p-Poincar\'e inequality on $\Xbt$\/\textup{;}
\item \label{f-1}
  $\mu$ supports a \p-Poincar\'e inequality on $\R^n$
  and
\begin{equation}   \label{eq-cap-cond-Rn-X-r-p-intro}
\cpXbtp(\{0\},B_r)  \simeq r^{-p}\mu(B_r)
\quad \text{for all } r>0;
\end{equation}
\item \label{f-2}
  $\mu$ supports a \p-Poincar\'e inequality on $\Xbtp$
  and
  \eqref{eq-cap-cond-Rn-X-r-p-intro} holds\/\textup{;}
\item \label{f-4}
  $w$ is an $A_p$-weight on $\R^n$
  and   \eqref{eq-cap-cond-Rn-X-r-p-intro} holds\/\textup{;}
\item \label{f-5}
  $\wt(\rho):=|\rho|^{n-1} w(|\rho|)$ is an $A_p$-weight on $\R$.
\end{enumerate}
\end{thm}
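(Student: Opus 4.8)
The plan is to derive Theorem~\ref{thm-intro-radial} from the general Theorem~\ref{thm-general-PI}, the classical fact that Muckenhoupt $A_p$-weights are \p-admissible, and a radial reduction to the one-dimensional weight~$\wt$. One first checks that $\mu$ is doubling on $\Xbtp$, on $\Xbtm$ and on the bow-tie $\Xbt$ (each contains a fixed dimensional proportion of every ball of $\R^n$ centred in it, and $\mu(B_r\cap\Xbtpm)=2^{-n}\mu(B_r)$ by radiality), so Theorem~\ref{thm-general-PI} is applicable to $X=\Xbt$. Its condition~\ref{c-b} holds with $\La=\sqrt2$, since $\langle\xplus,\xminus\rangle\le0$ yields $|\xplus-\xminus|^2\ge|\xplus|^2+|\xminus|^2\ge\tfrac12(|\xplus|+|\xminus|)^2$ for $\xplus\in\Xbtp$ and $\xminus\in\Xbtm$. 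As $x\mapsto-x$ is a $\mu$-preserving isometry of $\R^n$ interchanging $\Xbtp$ and $\Xbtm$, condition~\ref{c-a} reduces to a \p-Poincar\'e inequality on $\Xbtp$ alone, the two capacity conditions in~\ref{c-c-both} both become \eqref{eq-cap-cond-Rn-X-r-p-intro} (using $\mu(\Brpm)=2^{-n}\mu(B_r)$), and, since $\diam\Xbtpm=\infty$, the restriction on $r$ is vacuous. Hence Theorem~\ref{thm-general-PI} gives the equivalence of \ref{f-3} and \ref{f-2}, and it remains to prove the cycle \ref{f-2}$\Rightarrow$\ref{f-5}$\Rightarrow$\ref{f-4}$\Rightarrow$\ref{f-1}$\Rightarrow$\ref{f-2}.

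For \ref{f-2}$\Rightarrow$\ref{f-5}: by radial symmetrization of condensers in the convex cone $\Xbtp$ (the capacity formula for annuli around the origin),
\[
  \cpXbtp(\{0\},B_r)\simeq\Bigl(\int_0^r\wt(t)^{-1/(p-1)}\,dt\Bigr)^{1-p}\qquad(p>1),
\]
with the evident modification for $p=1$, while $\mu(B_r)\simeq\int_0^r\wt(t)\,dt$. Since the reverse inequality $\cpXbtp(\{0\},B_r)\simle r^{-p}\mu(B_r)$ is automatic (use the radial cut-off $t\mapsto\max\{0,1-t/r\}$, equivalently H\"older's inequality), condition~\eqref{eq-cap-cond-Rn-X-r-p-intro} is equivalent to the $A_p$ condition for $\wt$ over the intervals $(0,r)$, $r>0$. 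On the other hand, at every scale $\Xbtp$ contains balls $B(x_0,s)$ of $\R^n$ lying well inside $\Xbtp$ and away from the origin; for these the \p-Poincar\'e inequality on $\Xbtp$ is nothing but the $\mu$-Poincar\'e inequality on $\R^n$, and since such a ball is, up to bounded distortion, a product of an interval $I$ bounded away from $0$ with an $(n-1)$-ball, on which $w(|x|)\simeq|x_0|^{1-n}\wt(t)$, testing with functions of $t$ alone yields a one-dimensional \p-Poincar\'e inequality for $\wt$ on every such $I$. With the doubling of $\wt$ (inherited from $\mu$), a standard one-dimensional argument upgrades this to the $A_p$ condition for $\wt$ over all intervals bounded away from $0$. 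An even weight on $\R$ whose $A_p$ quotient is bounded both over the intervals $(0,r)$ and over the intervals bounded away from $0$ lies in $A_p(\R)$ (an interval $(a,b)$ with $0<a<b\le2a$ is of the second type; one with $a\le b/2$ has $A_p$ quotient comparable to that of $(0,b)$; and intervals straddling $0$ reduce by evenness to the first type), so \ref{f-5} holds.

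For \ref{f-5}$\Rightarrow$\ref{f-4}: if $\wt\in A_p(\R)$ then its $A_p$ quotient over the intervals $(0,r)$ is bounded, which by the above is exactly~\eqref{eq-cap-cond-Rn-X-r-p-intro}. To see $w\in A_p(\R^n)$, estimate the $A_p$ quotient of $w$ over a ball $B(x_0,s)\subset\R^n$: when $2s\le|x_0|$ it is comparable to the one-dimensional $A_p$ quotient of $\wt$ over $(|x_0|-s,|x_0|+s)$; when $|x_0|\le2s$ one has $\int_{B(x_0,s)}w\simeq\int_0^s\wt$ and $\int_{B(x_0,s)}w^{-1/(p-1)}\simeq\int_0^s\wt(t)^{-1/(p-1)}t^{(n-1)p/(p-1)}\,dt$, and the bound $t^{(n-1)p/(p-1)}\le s^{(n-1)p/(p-1)}$ shows that this quotient is dominated by a constant times $s^{-p}\int_0^s\wt\bigl(\int_0^s\wt^{-1/(p-1)}\bigr)^{p-1}$, hence bounded by~\eqref{eq-cap-cond-Rn-X-r-p-intro} once more (the case $p=1$ being analogous, with infima replacing $(p-1)$-powers). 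Then \ref{f-4}$\Rightarrow$\ref{f-1} since a Muckenhoupt $A_p$-weight is \p-admissible \cite{BBbook}, so $\mu$ supports a \p-Poincar\'e inequality on $\R^n$; and \ref{f-1}$\Rightarrow$\ref{f-2} by reflection: a function on $B(x_0,s)\cap\Xbtp$ extends to a ball of $\R^n$ by successive reflections in the bounding coordinate hyperplanes, which preserve $w$ (radial, hence symmetric in each hyperplane through $0$), so the \p-Poincar\'e inequality on $\R^n$ restricts to one on the cone $\Xbtp$. In every implication the capacity condition~\eqref{eq-cap-cond-Rn-X-r-p-intro} is carried along unchanged, which closes the cycle.

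The main obstacle is the two-way radial reduction between the $n$-dimensional objects and the one-dimensional weight~$\wt$: proving the capacity formula by symmetrization in the cone, and---more delicately---transferring Poincar\'e inequalities and $A_p$ conditions between $\R^n$ (or $\Xbtp$) and $\R$, which requires matching up precisely which family of balls controls which family of intervals. The conceptually crucial point, and the reason that \emph{both} $w\in A_p(\R^n)$ and \eqref{eq-cap-cond-Rn-X-r-p-intro} are needed in~\ref{f-4}, is that \eqref{eq-cap-cond-Rn-X-r-p-intro} is strictly stronger than the part of $w\in A_p(\R^n)$ arising from balls centred at the origin: owing to the Jacobian $t^{n-1}$, the quantity $\int_0^r w^{-1/(p-1)}t^{n-1}\,dt$ carries an extra positive power of $t$ relative to $\int_0^r\wt^{-1/(p-1)}\,dt$, so the $n$-dimensional condition at the origin is genuinely weaker---a gap that Example~\ref{ex-3.2} exploits. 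Finally, the endpoint $p=1$ must be treated in parallel throughout, the relevant dual quantities being infima rather than $(p-1)$-powers of integrals.
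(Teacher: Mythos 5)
Your proposal is correct and follows essentially the same route as the paper: the equivalence \ref{f-3}$\eqv$\ref{f-2} via Theorem~\ref{thm-general-PI} (the paper's Proposition~\ref{prop-w=woT}), the identification of \eqref{eq-cap-cond-Rn-X-r-p-intro} with the $A_p$ condition for $\wt$ on intervals $(0,r)$ via the explicit capacity formula of Proposition~\ref{prop-est-cap-p-ny}, the derivation of the $A_p$ condition on off-origin intervals by testing the Poincar\'e inequality with the one-dimensional extremal function (the paper's Theorem~\ref{thm-PI-bowtie-imp-Ap-R}), the transfer $\wt\in A_p(\R)\Rightarrow w\in A_p(\R^n)$ by the same spherical-coordinate estimates as Theorem~\ref{thm-PI-Ap-R-imp-Rn+bowtie}, and the closing implications via $A_p\Rightarrow$ \p-admissibility and folding/reflection (Proposition~\ref{prop1}). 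The only notable variation is that you obtain the off-origin $A_p$ estimates by testing on balls contained in the cone $\Xbtp$, thereby bypassing the paper's Theorem~\ref{thm-PI-Xplus-to-Rn} (which first upgrades the Poincar\'e inequality from $\Xbtp$ to all of $\R^n$ by aggregating hyperquadrants); this is a legitimate shortcut within the same overall strategy.
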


Unweighted $\R^n$ with $1 \le p \le n$ shows that the 
capacity condition~\eqref{eq-cap-cond-Rn-X-r-p-intro}
is not redundant in Theorem~\ref{thm-intro-radial}\,\ref{f-3}$\eqv$\ref{f-1}.
Weighted $\R^n$ shows that \eqref{eq-cap-cond-Rn-X-r-p-intro} is not
redundant in \ref{f-1}$\eqv$\ref{f-4}.
More precisely, the 
weight $w(x)=|x|^\al$ is doubling and supports a \p-Poincar\'e inequality on 
$\R^n$, with $n \ge 2$,
for all $p\ge1$ and all $\al>-n$, while it is an $A_p$ weight if and
only if $-n<\al<n(p-1)$ or $\alp=0$, see 
Heinonen--Kilpel\"ainen--Martio~\cite[p.~10]{HeKiMa} for $p>1$ and 
Proposition~\ref{prop-A1w} and Corollary~\ref{cor-wadm} for $p=1$.

Note that for radial weights,
the doubling condition for $\mu$ holds equivalently on
$\R^n$, $\Xbtp$ and $\Xbt$, see Lemma~\ref{lem-T-doubling}.
Similarly, the capacity
condition~\eqref{eq-cap-cond-Rn-X-r-p-intro} holds simultaneously for
$\cpXbtp$, $\cpXbt$ and $\cpRn$,
see Corollary~\ref{cor-anncap}.

The following theorem characterizes when the capacity 
condition~\eqref{eq-cap-cond-r-p-mu} holds.
The implications 
\[
p>\qmu \imp \eqref{eq-uQ-cp-intro} \imp p\ge\qmu
\]
appeared in 
Bj\"orn--Bj\"orn--Lehrb\"ack~\cite[Propositions~6.1 and~9.1]{BBLeh1}
(under weaker assumptions on $\mu$ and covering also the case $p=1$).
As a somewhat surprising application of Theorem~\ref{thm-general-PI}
and the deep self-improvement of Poincar\'e inequalities
due to Keith--Zhong~\cite[Theorem~1.0.1]{KeZh}, 
we now settle (in the negative) also the borderline case $p=\qmu>1$.
It is this characterization that makes it possible
  to replace \ref{c-c-both} by \ref{c-d} in the last part of 
  Theorem~\ref{thm-general-PI}.

\begin{thm} \label{thm-p>1-uQ}
Assume that $\mu$ is doubling and supports a \p-Poincar\'e inequality on $X$,
where $p>1$.
Let $0<R_0\le\tfrac14\diam X$.
Then
\begin{equation} \label{eq-uQ-cp-intro}
   \cpX(B_\rho,B_r) \simge r^{-p}\mu(\Br)
   \quad \text{for all } 0 < \rho <r < R_0   
\end{equation}
if and only if 
\begin{equation*}  
p > \qmu := \inf \Bigl\{Q >0 :
        \frac{\mu(B_\rho)}{\mu(B_r)}  \simge  \Bigl(\frac{\rho}{r}\Bigr)^{Q} 
        \text{ for all } 0 < \rho < r < R_0   
        \Bigr\}.
\end{equation*}

If, in addition, 
there is a locally compact open set $G \ni x_0$, 
then also the condition
\begin{equation} \label{eq-uQ-cp-x0intro}
   \cpX(\{x_0\},B_r) \simge r^{-p}\mu(\Br)
   \quad \text{for all } 0 < r < R_0  
\end{equation}
is equivalent to the two conditions above.
\end{thm}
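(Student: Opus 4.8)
\medskip

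The plan is to take as given the two implications $p>\qmu\imp\eqref{eq-uQ-cp-intro}\imp p\ge\qmu$ from~\cite{BBLeh1}, so that for the first equivalence it only remains to rule out the borderline case $p=\qmu$. For the last part, $\eqref{eq-uQ-cp-x0intro}\imp\eqref{eq-uQ-cp-intro}$ is immediate, since $\{x_0\}\subset B_\ro$ forces $\cpX(\{x_0\},B_r)\le\cpX(B_\ro,B_r)$; conversely, when the locally compact open set $G\ni x_0$ is present, the closed balls $\clB(x_0,\ro)$ are compact subsets of $G$ for small $\ro$, the variational capacity is then outer regular at the compact set $\{x_0\}$ (cf.~\cite{BBbook}), so $\cpX(\{x_0\},B_r)=\lim_{\ro\to0^+}\cpX(B_\ro,B_r)$, and letting $\ro\to0^+$ in $\eqref{eq-uQ-cp-intro}$ yields $\eqref{eq-uQ-cp-x0intro}$. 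Everything thus reduces to the single assertion that, under the standing hypotheses, $\eqref{eq-uQ-cp-intro}$ implies $\qmu<p$.

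To prove this I would first turn $\eqref{eq-uQ-cp-intro}$ into a condition on $\mu$ alone. Under doubling and a \p-Poincar\'e inequality there is, for $p>1$, the sharp two-sided estimate
\[
\cpX(B_\ro,B_r)\simeq\biggl(\int_\ro^r\Bigl(\frac{t^p}{\mu(B_t)}\Bigr)^{1/(p-1)}\frac{dt}{t}\biggr)^{1-p},\qquad 0<\ro<r<R_0,
\]
whose upper bound follows by testing with a radial profile adapted to the dyadic annuli around $x_0$ (only doubling is needed) and whose lower bound follows by the telescoping-and-Poincar\'e argument along concentric balls together with reverse doubling (valid since a \p-Poincar\'e space is connected) and a bounded-overlap covering; see~\cite{BBLeh1}. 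Hence $\eqref{eq-uQ-cp-intro}$ is equivalent to
\[
\int_\ro^r\Bigl(\frac{t^p}{\mu(B_t)}\Bigr)^{1/(p-1)}\frac{dt}{t}\simle\Bigl(\frac{r^p}{\mu(B_r)}\Bigr)^{1/(p-1)}\quad\text{for all }0<\ro<r<R_0.
\]
Writing $\psi(r)=\mu(B_r)r^{-p}$ and $\alpha=1/(p-1)$ and discretising at the scales $r2^{-j}<R_0$ (so that $\psi(r2^{-j})$ replaces $\psi(t)$ on the $j$-th dyadic annulus, up to a doubling constant), this says, after applying it also at the reference scales $r2^{-j}$, that $\sum_{\ell\ge j}\psi(r2^{-\ell})^{-\alpha}\simle\psi(r2^{-j})^{-\alpha}$ for every $j\ge0$, uniformly in $r<R_0$.

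The combinatorial core is then short. With $S_j:=\sum_{\ell\ge j}\psi(r2^{-\ell})^{-\alpha}$ the last bound reads $S_j\le M(S_j-S_{j+1})$ for some constant $M$, so $S_{j+1}\le(1-1/M)S_j$ and hence $\psi(r2^{-j})^{-\alpha}\le S_j\le M(1-1/M)^{j}\psi(r)^{-\alpha}$. Thus $\psi(r2^{-j})\simge\theta^{-j}\psi(r)$ with $\theta:=(1-1/M)^{1/\alpha}\in(0,1)$, uniformly in $r<R_0$ and $j\ge0$; translating back through $\ro\simeq r2^{-j}$ and the doubling of $\psi$ gives $\psi(\ro)\simge(r/\ro)^{\eta}\psi(r)$ with $\eta:=\log_2(1/\theta)>0$, that is $\mu(B_\ro)/\mu(B_r)\simge(\ro/r)^{p-\eta}$ for all $0<\ro<r<R_0$. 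Hence $Q=p-\eta$ is admissible in the infimum defining $\qmu$, so $\qmu\le p-\eta<p$, which is the required strict inequality.

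The step I expect to be the main obstacle is the sharp \emph{lower} bound for $\cpX(B_\ro,B_r)$ precisely in the range where $\eqref{eq-uQ-cp-intro}$ is about to fail, where the telescoping must not leak a constant per scale. This is also the point where, as indicated in the introduction, one may instead argue through Theorem~\ref{thm-general-PI} and the Keith--Zhong self-improvement~\cite{KeZh}: a \p-Poincar\'e inequality on the bow-tie obtained by gluing two copies of $X$ at $x_0$ --- which, given \ref{c-a}, \ref{c-b} and $\eqref{eq-uQ-cp-x0intro}$, is exactly condition~\ref{c-c-both} for that bow-tie --- self-improves to a $(p-\eps)$-Poincar\'e inequality, and the necessity part of Theorem~\ref{thm-general-PI} applied at the exponent $p-\eps$ then forces condition~\ref{c-c-both} at that exponent, hence $\eqref{eq-uQ-cp-intro}$ with $p$ replaced by $p-\eps$, and by the quoted implication $\eqref{eq-uQ-cp-intro}\imp p\ge\qmu$ the contradiction $p-\eps\ge\qmu=p$ with $p=\qmu$. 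All the remaining ingredients --- the two cited implications, the reductions of the first paragraph, and the elementary estimates above --- are routine.
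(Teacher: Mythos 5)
Your reduction of the problem to the single implication ``\eqref{eq-uQ-cp-intro} $\Rightarrow$ $p>\qmu$'' is the right skeleton, and your combinatorial argument ($S_j\le M(S_j-S_{j+1})$ forcing geometric decay of $\psi(r2^{-j})^{-\alpha}$) is correct \emph{conditional on} the two-sided formula $\cpX(B_\rho,B_r)\simeq\bigl(\int_\rho^r(t^p/\mu(B_t))^{1/(p-1)}\,dt/t\bigr)^{1-p}$. But that formula is exactly where your primary route breaks down: only the upper bound follows from doubling and a radial test function. The matching lower bound is not in \cite{BBLeh1} in this generality and cannot be extracted from the telescoping-plus-Poincar\'e argument, because the dilated balls $\la B_{2^{-j}r}$ are nested rather than boundedly overlapping, so the argument leaks a factor per scale --- precisely in the borderline regime you need. (In \cite{BBLeh1} such sharp lower bounds are only obtained under a strict exponent inequality like $p>Q$; if the two-sided formula held unconditionally, the borderline case $p=\qmu$ would have been elementary and the present paper would not need Keith--Zhong.) So the step you yourself flag as ``the main obstacle'' is a genuine gap, not a routine verification.

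Your fallback via the glued bow-tie and Keith--Zhong is indeed the paper's actual argument, but as sketched it only works under the extra hypothesis of the last part of the theorem. To feed Theorem~\ref{thm-general-PI}\,\ref{c-c-both} you need the \emph{point} capacity condition \eqref{eq-uQ-cp-x0intro}, and passing from \eqref{eq-uQ-cp-intro} to \eqref{eq-uQ-cp-x0intro} requires the quasicontinuity/outer-regularity step of Lemma~\ref{lem-cp-cond-char}, which needs local compactness near $x_0$ --- something \emph{not} assumed in the first equivalence of the theorem. The paper resolves this by first passing to the completion $\Xhat$ with the extended measure $\muhat$ (via \cite{BBnoncomp}), which is automatically locally compact, and then gluing two copies of $\Xhat$ rather than of $X$; this step is essential and missing from your outline. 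Two further points you do not address: the mismatch between $R_0$ and $\tfrac14\diam X$ when $R_0<\tfrac14\diam X$ (the paper reduces the unbounded case to a bounded uniform domain via \cite{rajala}, and uses the last part of Lemma~\ref{lem-cp-cond-char} in the bounded case), and the verification that conditions \ref{c-a} and \ref{c-b} of Theorem~\ref{thm-general-PI} actually hold for the glued space (easy, but it is what makes the bow-tie construction legitimate). With the completion step and these reductions supplied, your fallback becomes a complete proof --- and it is then the same proof as the paper's.
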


When $R_0<\infty$, Lemma~\ref{lem-cp-cond-char} and 
\cite[Lemma~2.5]{BBLeh1} show that both \eqref{eq-uQ-cp-intro} and $\qmu$
are independent of the particular choice of $R_0$.
Example~\ref{ex-Q-vs-Qo} illustrates the difference between finite and infinite $R_0$.

For $p=1$ and radial weights on $\R^n$, each of \eqref{eq-uQ-cp-intro} 
and \eqref {eq-uQ-cp-x0intro} is equivalent to 
$\mu(B_\rho)/\mu(B_r)\simge\rho/r$,
i.e.\ $p=1 \ge \qmu$, see Proposition~\ref{prop-p=1-uQ}.
For other spaces and measures, the case $p=1=\qmu$ seems to be open.
When $p=1 > \qmu$, the equivalences follow from
\cite[Proposition~6.1]{BBLeh1} and Lemma~\ref{lem-cp-cond-char}.

Along the way, we obtain the
following explicit formula for the capacity 
of annuli
around the origin with respect to radial weights.
(Note that here we do not require $\mu$ to be doubling, but
\eqref{eq-mu-finite} is required as before.)

\begin{prop} \label{prop-est-cap-p-ny}
Assume that $d\mu=w\,dx$, where 
$w(x)=w(|x|)$ is a radial weight
on $\R^n$.
If $r>0$, then 
\begin{equation}   \label{eq-est-cap-p-ny}
  \cpRn(\{0\}, B_r) =
  \begin{cases}
    \displaystyle
     \biggl( \int_0^r \what(\rho)^{1/(1-p)}\,d\rho  \biggr)^{1-p},
    & \text{if } p>1, \\
    \displaystyle
     \essinf_{0<\rho<r} \what(\rho), 
    & \text{if } p = 1,
    \end{cases}
\end{equation}
where $\what(\rho):=\om_{n-1}w(\rho) \rho^{n-1}$ and
$\om_{n-1}$ is the surface area of the $(n-1)$-dimensional unit
sphere in $\R^n$ \textup{(}with $\om_0=2$\textup{)}.

Similarly, if $0 < r' < r$, then 
\begin{equation*}  
\cpRn(B_{r'},B_r) =
\cpRn(\clB_{r'},B_r) =
  \begin{cases}
    \displaystyle
     \biggl( \int_{r'}^r \what(\rho)^{1/(1-p)}\,d\rho  \biggr)^{1-p},
    & \text{if } p>1, \\
    \displaystyle
      \essinf_{r'<\rho<r} \what(\rho), 
    & \text{if } p = 1.
    \end{cases}
\end{equation*}
\end{prop}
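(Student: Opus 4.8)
The idea is to pass to polar coordinates and reduce everything to a one-dimensional weighted problem, whose value and extremal are read off from Hölder's inequality when $p>1$ and from a direct $L^\infty$-estimate when $p=1$. Throughout I use that on $(\R^n,d\mu=w\,dx)$ the minimal $p$-weak upper gradient of a Newtonian function is $|\nabla u|$, and I write $x=\rho\theta$ with $\rho=|x|$ and $\theta\in\partial B_1$, so that $dx=\rho^{n-1}\,d\rho\,d\sigma(\theta)$ with $\sigma(\partial B_1)=\om_{n-1}$ and hence $w(\rho)\rho^{n-1}=\what(\rho)/\om_{n-1}$. For the \emph{lower bounds}, let $u$ be admissible for the capacity in question, with $\int_{B_r}|\nabla u|^p\,d\mu<\infty$ (otherwise there is nothing to prove). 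By the upper gradient inequality along $p$-a.e.\ radial segment together with Fubini in polar coordinates, for $\sigma$-a.e.\ $\theta$ the map $\rho\mapsto u(\rho\theta)$ is absolutely continuous, $|\partial_\rho u(\rho\theta)|\le|\nabla u(\rho\theta)|$, it equals $1$ at the inner radius and $0$ at $\rho=r$, so $1\le\int_{r'}^r|\partial_\rho u(\rho\theta)|\,d\rho$. For $p>1$, Hölder's inequality with exponents $p$ and $p/(p-1)$ against the weight $\what(\rho)^{1/(1-p)}$ gives, after raising to the power $p$,
\[
\int_{r'}^r|\nabla u(\rho\theta)|^p\what(\rho)\,d\rho \ \ge\ \Bigl(\int_{r'}^r\what(\rho)^{1/(1-p)}\,d\rho\Bigr)^{1-p},
\]
while for $p=1$ one bounds $\int_{r'}^r|\partial_\rho u(\rho\theta)|\what(\rho)\,d\rho\ge\essinf_{r'<\rho<r}\what(\rho)$. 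Dividing by $\om_{n-1}$ and integrating over $\theta\in\partial B_1$ yields the claimed lower bound for $\cpRn(\clB_{r'},B_r)$ (and, with $r'$ replaced by $0$, for $\cpRn(\{0\},B_r)$). Since an admissible function for $\cpRn(B_{r'},B_r)$ is $\ge1$ q.e.\ on $B_{r'}$, hence $\ge1$ throughout each of $\sigma$-a.e.\ ray up to radius $r'$ by absolute continuity, the same estimate applies to it as well.

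For the \emph{matching upper bounds}, when $p>1$ put $J=\int_{r'}^r\what(\rho)^{1/(1-p)}\,d\rho$. If $J<\infty$, the radial function $u(x)=v(|x|)$ with $v(\rho)=J^{-1}\int_{\min\{\rho,r\}}^{r}\what(s)^{1/(1-p)}\,ds$ for $\rho\ge r'$ and $v\equiv1$ on $[0,r']$ is continuous, equals $1$ on $\clB_{r'}$, vanishes outside $B_r$, lies in $\Np_0(B_r)$, and realizes equality in Hölder's inequality, so that $\int_{B_r}|\nabla u|^p\,d\mu=J^{1-p}$; hence $\cpRn(\clB_{r'},B_r)\le J^{1-p}$. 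If $J=\infty$, replacing $r'$ by $\delta>r'$ and letting $\delta\downarrow r'$ shows the capacity is $0$. For $p=1$ and $\eps>0$ the set $E_\eps=\{r'<\rho<r:\what(\rho)<\essinf_{r'<\rho<r}\what+\eps\}$ has positive measure, and the Lipschitz function $u(x)=v(|x|)$ with $v(\rho)=|E_\eps\cap(\rho,r)|/|E_\eps|$ on $[r',r]$ (and $v\equiv1$ on $[0,r']$) is admissible with $\int_{B_r}|\nabla u|\,d\mu=|E_\eps|^{-1}\int_{E_\eps}\what\le\essinf_{r'<\rho<r}\what+\eps$; letting $\eps\to0$ gives $\cpRn(\clB_{r'},B_r)\le\essinf_{r'<\rho<r}\what$. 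Combined with the lower bounds and the trivial inequality $\cpRn(B_{r'},B_r)\le\cpRn(\clB_{r'},B_r)$, this establishes both annulus identities.

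Finally, the \emph{ball} $B_r$. One direction is free: since $\{0\}\subset\clB_\delta$, the annulus identity and monotone convergence give $\cpRn(\{0\},B_r)\le\lim_{\delta\to0}\cpRn(\clB_\delta,B_r)$, which equals the right-hand side of \eqref{eq-est-cap-p-ny}. For the reverse inequality one wants to run the lower-bound computation over the full interval $(0,r)$, i.e.\ to know that an admissible $u$ satisfies $\lim_{\rho\to0^+}u(\rho\theta)\ge1$ for $\sigma$-a.e.\ $\theta$; \textbf{this is the only genuinely delicate point}, since it is false unless $\{0\}$ has positive $p$-capacity. It becomes available exactly in the relevant case: the same Hölder (resp.\ $L^\infty$) computation shows that the $p$-modulus of the family of radial segments from $0$ to $\partial B_r$ is at least the right-hand side of \eqref{eq-est-cap-p-ny}; when this is positive, $\{0\}$ is not $p$-exceptional — otherwise an $\Np$-function equal to $+\infty$ at $0$ would force almost every such segment to have infinite upper-gradient length, making that modulus vanish — so $u(0)\ge1$ holds genuinely and the upper gradient inequality holds along $\sigma$-a.e.\ radial segment from $0$, whence the computation gives $\cpRn(\{0\},B_r)\ge$ the right-hand side. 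When the right-hand side is $0$ there is nothing to prove, and the case $p=1$ is entirely analogous with $J$ replaced by $\essinf_{0<\rho<r}\what$ throughout.
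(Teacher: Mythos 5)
Your overall strategy -- polar coordinates, H\"older against the weight $\what^{1/(1-p)}$ for $p>1$, an $\essinf$ estimate for $p=1$, and the explicit near-extremal radial function for the upper bound -- is the same as the paper's. The one genuine methodological difference is in the lower bound: the paper first invokes a radial symmetrization lemma (Lemma~\ref{lem-radial}, quoted from \cite{BBLeh1}) to reduce to radial $u$ and radial upper gradients $g$, for which $\int_0^r g\,d\rho\ge1$ is immediate, whereas you keep a general admissible $u$ and run the H\"older estimate ray by ray, integrating over $\theta$ at the end. That is a legitimate and more self-contained route. However, you frame it via ``the minimal $p$-weak upper gradient is $|\nabla u|$'', which is not available here: the proposition assumes nothing beyond \eqref{eq-mu-finite} (no doubling, no Poincar\'e inequality, no $A_p$ condition), and the identification of minimal weak upper gradients with $|\nabla u|$ requires admissibility of the weight. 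Fortunately the detour is unnecessary: the capacity in Definition~\ref{deff-varcap} is an infimum over genuine upper gradients of everywhere-defined functions, and the upper gradient inequality holds for \emph{every} curve. So for any upper gradient $g$ and every $\theta$ one has $1=|u(0)-u(r\theta)|\le\int_0^r g(s\theta)\,ds$ directly, after which H\"older and Fubini give the bound. This also collapses your ``only genuinely delicate point'': since $u(0)=1$ pointwise and the radial segment through $0$ is an admissible curve, the entire modulus/exceptionality discussion for the point case is not needed. (Your argument there is not wrong, but it solves a problem that does not arise in this framework.)

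There is one genuine gap, in the upper bound for $p>1$ when $J=\int_{r'}^r\what^{1/(1-p)}\,d\rho=\infty$. Your fix is to replace $r'$ by $\delta>r'$ and let $\delta\downarrow r'$, but this presupposes both that $\int_\delta^r\what^{1/(1-p)}\,d\rho<\infty$ for $\delta$ near $r'$ and that these integrals tend to $\infty$. Neither is guaranteed: the hypotheses allow $\what$ to vanish on a set of positive measure that is not confined to a neighbourhood of $r'$ (e.g.\ radii forming a fat Cantor set accumulating at $r$), in which case $\int_\delta^r\what^{1/(1-p)}\,d\rho=\infty$ for \emph{every} $\delta\in(r',r)$ and your construction produces no admissible competitor at any level $\delta$. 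The capacity is indeed $0$ in such cases, but one must exhibit test functions; this is exactly what the paper's regularization $w_\eps=w+\eps$ accomplishes, since $\what_\eps^{1/(1-p)}$ is bounded on $(r',r)$, the extremal exists for $w_\eps$, one has $\cpRn\le\capp_{p,w_\eps}^{\R^n}$, and monotone convergence as $\eps\to0$ recovers the stated formula in all cases at once. You should either adopt that regularization or restrict your direct construction to the case $J<\infty$ and give a separate null-energy test function for the degenerate case. The same remark applies to the point case $\{0\\}$, where $\int_0^r\what_\eps^{1/(1-p)}\,d\rho$ may also be infinite and a truncation near $0$ is needed; your monotone-convergence reduction $\cpRn(\{0\},B_r)\le\lim_{\delta\to0}\cpRn(\clB_\delta,B_r)$ is a clean way to handle that, provided the annulus case has been settled in full generality first.
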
  

For $p>1$ and radial weights supporting a \p-Poincar\'e inequality on
$\R^n$, the formula for $\cpRn(B_{r'},B_r)$
was obtained 
in~\cite[Proposition~10.8]{BBLeh1}.
Our proof here is shorter, more elementary and does not
require any \p-Poincar\'e inequality.

Poincar\'e inequalities are important tools in various applications.
For example, in
Heinonen--Kilpel\"ainen--Martio~\cite{HeKiMa},
an extensive nonlinear potential theory was developed for
weighted $\R^n$ equipped with a weight $w$ supporting a \p-Poincar\'e inequality
and a doubling condition for the associated measure $d\mu=w\, dx$,
a so-called \emph{\p-admissible weight}.

Bow-ties are the simplest examples of glueing two
metric spaces together (at one point). 
Such constructions appeared in
Heinonen--Koskela~\cite[Section~6.14]{HeKo98},
where Ahlfors $Q$-regular spaces supporting Poincar\'e inequalities
were glued along various sets (not only at a single point).
Bow-ties often provide (counter)examples of metric spaces with various
interesting properties, see e.g.\
\cite[Section~8]{ABsuper},
\cite[Examples~5.6, 5.7, A.23 and~A.24]{BBbook},
\cite[Example~6.2]{BBvarcap},
\cite[p.~51]{BBnoncomp},
\cite[p.~1189]{BBLeh1},
\cite[Example~6.1]{BBLthin},
\cite[Example~4.5]{BBLehIntGreen},
\cite[Example~5.2]{BBS5},
\cite[p.~814]{JJRRS},
\cite[Remark~5.2]{KKST3},
\cite[p.~102]{Korte}
and 
\cite[Example~6.2]{KoLaSh2015}.

Shanmugalingam~\cite[Example~4.3.1]{Sh-PhD} seems to be the
first who  used the name 
\emph{bow-tie} for such examples.
Later, in~\cite{KKST3} and~\cite{KoLaSh2015}, 
they were also called \emph{Gehring bow-ties}, even though 
the example is not due to Gehring.  
Rather, the name stems from the fact that Fred Gehring (an analyst in Ann Arbor)
was always wearing a bow-tie.

The outline of the paper is as follows:
In Section~\ref{sect-prel} we discuss the necessary background from
analysis on metric spaces.
Section~\ref{sect-general} is devoted to bow-ties in metric spaces
and 
Theorem~\ref{thm-general-PI} is proved therein.
The capacity conditions~\eqref{eq-uQ-cp-intro} and~\eqref{eq-uQ-cp-x0intro}
are studied in Section~\ref{sect-uQ},
where the proof of Theorem~\ref{thm-p>1-uQ} is given.

From Section~\ref{sect-T-inv} onwards we concentrate on weights on $\R^n$
and the bow-ties $\Xbt$ (as given in Theorem~\ref{thm-intro-radial}).
In Section~\ref{sect-T-inv} we develop the theory, as far as possible,
for so-called $T$-invariant weights, while in Section~\ref{sect-radial} we turn to 
radial weights where more can be said and 
Theorem~\ref{thm-intro-radial} and Proposition~\ref{prop-est-cap-p-ny} are 
deduced.

Finally, in Section~\ref{sect-log-power} we study when 
conditions~\eqref{eq-cap-cond-r-p-mu} and~\eqref{eq-cp>0} hold
for logarithmic power weights. 
We leave it to the reader to draw conclusions using
these facts together with Theorems~\ref{thm-general-PI} and~\ref{thm-intro-radial}.

\begin{ack}
A.~B. and J.~B. 
  were supported by the Swedish Research Council, 
grants 2016-03424 and 2020-04011 resp.\ 621-2014-3974 and 2018-04106.
\end{ack}

\section{Preliminaries}
\label{sect-prel}

In this section we introduce the necessary background notation on metric
spaces and in particular on Sobolev spaces and capacities in metric spaces.
See the monographs Bj\"orn--Bj\"orn~\cite{BBbook} and
Heinonen--Koskela--Shanmugalingam--Tyson~\cite{HKSTbook}
for more extensive treatments of these topics, including
proofs of most of the results mentioned in this section.

We always assume that  $1 \le p<\infty$ 
and that $X=(X,d,\mu)$ is a metric space equipped
with a metric $d$ and a positive complete  Borel  measure $\mu$ 
such that~\eqref{eq-mu-finite} holds.
We say that $\mu$  is \emph{doubling} if
there exists a \emph{doubling constant} $C>0$ such that for all balls
$B=B(x,r):=\{y\in X: d(x,y)<r\}$ in~$X$,
\begin{equation*}
        0 < \mu(2B) \le C \mu(B) < \infty.
\end{equation*}
Here and elsewhere we let $cB=B(x,cr)$.

A \emph{curve} is a continuous mapping from an interval,
and a \emph{rectifiable} curve is a curve with finite length.
We will only consider curves which are nonconstant, compact
and rectifiable, and thus each curve can 
be parameterized by its arc length $ds$. 
Following Heinonen--Kos\-ke\-la~\cite{HeKo98},
we introduce upper gradients as follows 
(called very weak gradients in~\cite{HeKo98}).

\begin{deff} \label{deff-ug}
A Borel function $g : X \to [0,\infty]$  is an \emph{upper gradient} 
of a function $f: X \to [-\infty,\infty]$
if for all  curves  
$\gamma : [0,l_{\gamma}] \to X$,
\begin{equation*}
        |f(\gamma(0)) - f(\gamma(l_{\gamma}))| \le \int_{\gamma} g\,ds,
\end{equation*}
where the left-hand side is considered to be $\infty$ 
whenever at least one of the terms therein is infinite.
\end{deff}

The following version of Sobolev spaces on $X$
is from Shanmugalingam~\cite{Sh-rev}.

\begin{deff} \label{deff-Np}
For a measurable function $f: X\to [-\infty,\infty]$,
let 
\[
        \|f\|_{\Np(X)} = \biggl( \int_X |f|^p \, d\mu 
                + \inf_g  \int_X g^p \, d\mu \biggr)^{1/p},
\]
where the infimum is taken over all upper gradients $g$ of $f$.
The \emph{Newtonian space} on $X$ is 
\[
        \Np (X) = \{f: \|f\|_{\Np(X)} <\infty \}.
\]
\end{deff}
\medskip

The quotient space $\Np(X)/{\sim}$, where  $f \sim h$ if and only if $\|f-h\|_{\Np(X)}=0$,
is a Banach space and a lattice, see~\cite{Sh-rev}.
In this paper we assume that functions in $\Np(X)$ are defined everywhere,
not just up to an equivalence class in the corresponding function space.
This is needed for the definition of upper gradients to make sense.

\begin{deff} \label{def-PI}
Let $1 \le q < \infty$.
We say that $X$ or $\mu$ supports a \emph{$(q,p)$-Poincar\'e inequality} if
there exist constants $C>0$ and $\lambda \ge 1$
such that for all balls $B=B(x,r)$, 
all integrable functions $f$ on $X$, and all 
upper gradients $g$ of $f$, 
\begin{equation*}
        \biggl(\vint_{B} |f-f_B|^q \,d\mu\biggr)^{1/q}
        \le C r \biggl( \vint_{\lambda B} g^{p} \,d\mu \biggr)^{1/p},
\end{equation*}
where $ f_B 
 :=\vint_B f \,d\mu 
:= \int_B f\, d\mu/\mu(B)$.
If $q=1$, we usually just say \emph{\p-Poincar\'e inequality}.
\end{deff}

The Poincar\'e inequality holds equivalently 
for all measurable $f$, see
\cite[Proposition~4.13]{BBbook} and \cite[p.~50]{BBnoncomp}.
See also
\cite[Lemma~2.6]{BBLahti},
\cite[Theorem~3.2]{HaKo},
\cite[Lemma~5.15]{HeKo98}
\cite[Theorems~8.1.49 and~8.1.53]{HKSTbook}
and \cite[Theorem~2]{Keith}
for further equivalent versions.

A \emph{weight} $w$ on $\R^n$ is a nonnegative
locally integrable function
such that $d\mu=w \,dx$ is a Borel regular measure.
If $\mu$ is doubling and
supports a  \p-Poincar\'e inequality on $\R^n$,
then $w$ is called a \emph{\p-admissible weight}.
See Corollary~20.9 in \cite{HeKiMa}
(which is only in the second edition) and
Proposition~A.17 in \cite{BBbook}
for why this is equivalent to other definitions in the literature.

A weight $w$ on $\R^n$ is a
(Muckenhoupt) \emph{$A_p$-weight} if 
there exists $C>0$ such that 
\begin{equation} \label{eq-Ap-def}
\vint_B w\, dx < C 
\begin{cases} 
       \displaystyle \biggl( \vint_{B} w^{1/(1-p)}\,dx \biggr)^{1-p},
         & \text{if } 1<p<\infty, \\
      \displaystyle \essinf_B w, & \text{if }p=1,  \end{cases}  
\end{equation}
for all balls $B \subset \R^n$. 
$A_p$-weights are \p-admissible, see 
Heinonen--Kilpel\"ainen--Martio~\cite[Theorem~15.21]{HeKiMa} (for $p>1$)
and Bj\"orn~\cite[Theorem~4]{JB-Fenn} (for $p=1$).

\begin{deff} \label{deff-varcap}
Let $\Om\subset X$ be open. The \emph{variational 
\p-capacity} of $E\subset \Om$ with respect to $\Om$ is
\[
\cpX(E,\Om)
= \inf_u\int_{\Om} g^p\, d\mu,
\]
where the infimum is taken over all $u \in \Np(X)$,
such that $u=1$ in $E$ and $u=0$ on $X \setm \Om$,
and all upper gradients $g$ of $u$.
We call such a function $u$ \emph{admissible} for testing
$\cpX(E,\Om)$.  
\end{deff}

If $d\mu=w\,dx$ on $\R^n$ we 
also write
$\cpRnw(E,\Om)=\cpRn(E,\Om)$.
If $\R^n$ is equipped with a \p-admissible weight $w$,
then 
$\cpRnw$ is the usual variational capacity and $\Np(\R^n)$ and $\Np(\Om)$ are the 
refined Sobolev spaces as in
Heinonen--Kilpel\"ainen--Martio~\cite[p.~96]{HeKiMa},
see Bj\"orn--Bj\"orn~\cite[Theorem~5.1]{BBvarcap}
and~\cite[Appendix~A.2]{BBbook}.

Throughout the paper, we write $a \simle b$ if there is an implicit
constant $C>0$ such that $a \le Cb$, where $C$
may depend on fixed parameters such as $X$, $\mu$ and $p$, but
is independent of the 
essential parameters involved in $a$ and $b$. 
We also write $a \simge b$ if $b \simle a$,
and $a \simeq b$ if $a \simle b \simle a$.

\section{Bow-ties in metric spaces} 
\label{sect-general}

In this section we consider the metric space
$X=\Xplus \cup \Xminus$,
where $\Xplus \cap \Xminus=\{x_0\}$ is a fixed designated point
and $\Xpm \ne \{x_0\}$ are closed subsets of $X$.
Note that 
$X$ is connected if and only if
both $\Xplus$ and $\Xminus$ are connected.

The following condition~\ref{c-b} from Theorem~\ref{thm-general-PI}
will play a key role:
\begin{equation}    \label{eq-cond-d}
\text{There is $\Lambda$ so that}
\quad 
d(\xplus,x_0) + d(x_0,\xminus) \le \Lambda d(\xplus,\xminus)
\quad \text{for all } \xpm  \in \Xpm.
\end{equation}

Since $\Xpm\ne \{x_0\}$, we must have $\La \ge 1$.
We let $B_r=B(x_0,r)$ and $\Brpm=B_r \cap \Xpm$.

\begin{prop} \label{prop-gen-doubl}
Assume that $X$ is connected and that
\eqref{eq-cond-d} holds.
Then the measure $\mu$ is doubling on $X$ 
if and only if
the following conditions hold\/\textup{:}
\begin{enumerate}
\item \label{b-a}
$\mu$ is doubling on $\Xplus$ and on $\Xminus$\textup{;}
\item \label{b-b}
$ \mu(\Brplus) \simeq \mu(\Brminus)$
for $0<r < \min\{\diam \Xplus, \diam \Xminus\}$.
\end{enumerate}
\end{prop}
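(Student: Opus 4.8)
The plan is to prove the equivalence by analyzing balls $B(x,r)$ in $X$ according to where the centre $x$ sits relative to $x_0$, and relating them to balls in $\Xplus$ and $\Xminus$. First I would record the basic geometric observation coming from \eqref{eq-cond-d}: for any $x \in \Xplus$ we have $B(x,r) \cap \Xminus \subset B(x_0, \Lambda r) \cap \Xminus = B_{\Lambda r}^{\limminus}$, since any $\xminus \in \Xminus$ with $d(x,\xminus) < r$ satisfies $d(x_0,\xminus) \le \Lambda d(x,\xminus) < \Lambda r$ (the term $d(\xplus, x_0)$ is dropped); symmetrically with the roles reversed. Conversely, if $r > 2d(x,x_0)$ then $B_{r/2} \subset B(x,r) \subset B_{2r}$ where $B_s = B(x_0,s)$, so balls centred near $x_0$ are comparable to balls centred at $x_0$. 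These two facts let me decompose $\mu(B(x,r))$ and $\mu(2B(x,r))$ into their $\Xplus$- and $\Xminus$-parts and reduce everything to doubling on the two wings plus the comparison of $\mu(B_r^{\limplus})$ and $\mu(B_r^{\limminus})$.

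For the forward direction (doubling on $X$ implies \ref{b-a} and \ref{b-b}), I would argue as follows. For \ref{b-a}: given a ball $B(x,r)$ with $x \in \Xplus$, note $\mu(B(x,r) \cap \Xplus) \le \mu(B(x,r)) \le \mu(2^k B(x,r))$ for the wing-ball, and I need a lower bound of the form $\mu(B(x,2r) \cap \Xplus) \gtrsim \mu(B(x,r) \cap \Xplus)$. The only way the $X$-ball can ``leak'' mass is through $\Xminus$; but by the geometric observation that leaked part lies in $B_{\Lambda r}^{\limminus}$, and using connectedness one shows $B_{\Lambda r}^{\limminus}$ has mass comparable (via $X$-doubling applied to a suitable ball around a point of $\Xminus$ near $x_0$, which exists since $\Xminus \ne \{x_0\}$ and $\Xminus$ is connected) to $\mu(B(x,2r) \cap \Xplus)$ — this is where \ref{b-b} gets intertwined, so I would prove \ref{b-b} first. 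For \ref{b-b}: take $r$ small, pick a point $\xplus \in \Xplus$ with $d(x_0,\xplus)$ of order $r$ (using $\Xplus$ connected and $\diam \Xplus$ large enough), then $B(\xplus, cr) \subset B_r^{\limplus}$ for suitable $c$ while $B(\xplus, Cr) \supset B_r^{\limplus} \cup (\text{a definite chunk of } \Xminus)$; applying $X$-doubling to the chain of balls between $B(\xplus,cr)$ and $B(\xplus,Cr)$ and using that the latter contains $B_{r'}^{\limminus}$ for some $r' \simeq r$ gives $\mu(B_r^{\limminus}) \lesssim \mu(B_r^{\limplus})$, and symmetrically the reverse. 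Connectedness of $\Xpm$ is essential here to guarantee that points at the right intermediate distances from $x_0$ actually exist.

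For the converse (\ref{b-a} and \ref{b-b} imply $\mu$ doubling on $X$), I would take an arbitrary ball $B = B(x,r)$ in $X$ and split into cases. If $d(x,x_0) \ge 2r$, then $B$ lies entirely in one wing, say $\Xplus$, at positive distance from $x_0$, and $2B$ is also contained in $\Xplus$, so $X$-doubling for $B$ reduces directly to $\Xplus$-doubling. If $d(x,x_0) < 2r$, then $B_{r/2} \subset 2B$ and $B \subset B_{4r}$ (with $B_s = B(x_0,s)$), so $\mu(2B) \ge \mu(B_{r/2}) = \mu(B_{r/2}^{\limplus}) + \mu(B_{r/2}^{\limminus}) - \mu(\{x_0\})$ and $\mu(B) \le \mu(B_{4r}^{\limplus}) + \mu(B_{4r}^{\limminus})$; now \ref{b-a} applied on each wing controls $\mu(B_{4r}^{\limpm})$ by $\mu(B_{r/2}^{\limpm})$ up to a constant (for $r$ small; for $r$ comparable to or larger than $\diam X$ one uses that $\mu(X) < \infty$ forces the measure to be trivially doubling on large scales, or handles it by the standard chaining argument using connectedness), and \ref{b-b} lets me compare the two wings if one of $B_{r/2}^{\limpm}$ happens to be essentially just $\{x_0\}$. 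A subtlety I would be careful about: when $r$ exceeds $\min\{\diam\Xplus,\diam\Xminus\}$, condition \ref{b-b} no longer directly applies, but then one of the wings is ``saturated'' and the bound $\mu(2B)\le C\mu(B)$ follows from doubling on the larger wing plus $\mu(\text{small wing})<\infty$.

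The main obstacle I expect is the bookkeeping in the case analysis near $x_0$ — specifically, making the constants uniform across all scales $r$, including the transitional regime where $r$ is of the same order as $\diam \Xplus$ or $\diam \Xminus$, and handling the degenerate possibility that for very small $r$ one wing-ball $B_r^{\limpm}$ is (almost) just the single point $x_0$. Connectedness is what rescues both directions: it guarantees that wing-balls at every scale up to the diameter contain genuine chunks of the space at the requisite intermediate radii, so that no ball is ``isolated'' and the chaining arguments go through; without it the comparison \ref{b-b} and the leakage estimates could fail. I would therefore state a small lemma at the outset: under \eqref{eq-cond-d} and connectedness, for $0 < s < \diam \Xpm$ there is a point $y \in \Xpm$ with $s/2 \le d(x_0,y) \le s$, and $B(y, s/4) \subset B_{2s}^{\limpm} \setminus \{x_0\}$ — and then feed this into both directions.
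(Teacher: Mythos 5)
Your overall strategy coincides with the paper's: prove \ref{b-b} first by locating a point of $\Xplus$ at distance $\simeq r$ from $x_0$ whose small ball avoids $\Xminus$ (via \eqref{eq-cond-d} and connectedness), then deduce \ref{b-a}, and prove the converse by cases on $d(x,x_0)/r$. However, two steps as you describe them would fail. In deducing doubling on $\Xplus$, you propose to bound the leaked mass by $\mu(B(x,r)\cap\Xminus)\le\mu(B_{\La r}^{\limminus})\simeq\mu(B_{\La r}^{\limplus})$ and then compare this with $\mu(B(x,r)\cap\Xplus)$. But passing from the ball of radius $\La r$ centred at $x_0$ down to a ball of radius $\simeq r$ centred at $x$ inside $\Xplus$ is itself an instance of doubling on $\Xplus$ --- exactly what is being proved --- so the argument is circular as sketched. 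The fix, which is the one non-obvious point of the paper's proof, is to contract first: by $X$-doubling, $\mu(2B)\simle\mu(B')$ with $B'=B(x,r/4\La)$; then either $B'\cap\Xminus=\emptyset$ and you are done, or \eqref{eq-cond-d} forces $d(x,x_0)<\tfrac14 r$, whence $B'\cap\Xminus\subset B_{r/2}^{\limminus}$ and \ref{b-b} at the single radius $r/2$ gives $\mu(B_{r/2}^{\limminus})\simle\mu(B_{r/2}^{\limplus})\le\mu(B(x,r)\cap\Xplus)$, with no circularity.

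In the converse, your first case is wrong as stated: $d(x,x_0)\ge 2r$ does not imply $2B\subset\Xplus$, since \eqref{eq-cond-d} only yields $\dist(x,\Xminus)\ge d(x,x_0)/\La$ and $\La$ may be large; the threshold must scale like $\La r$. But then in the complementary regime $d(x,x_0)\simle\La r$ the inclusion $B_{r/2}\subset 2B$ fails, so you cannot bound $\mu(2B)$ from below by a ball centred at $x_0$. Instead one bounds $\mu(2B\cap\Xminus)\le\mu(B_{4\La r}^{\limminus})\simle\mu(B_{r/2}^{\limminus})\simle\mu(B_{r/2}^{\limplus})\simle\mu(B\cap\Xplus)$, using doubling on $\Xminus$, then \ref{b-b}, then doubling on $\Xplus$, which is how the paper proceeds. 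Relatedly, when $r\simge\diam\Xminus$ the observation ``$\mu(\text{small wing})<\infty$'' gives no scale-uniform constant; one needs \ref{b-b} applied at radius $\simeq\tfrac12\diam\Xminus$ combined with doubling on the small wing, as in \eqref{eq-for-r-ge-d}. Both defects are repairable, but they sit exactly where the substance of the proof lies, so they must be filled in.
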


Note that $\Xplus$ and $\Xminus$ may have very
  different diameters or may be unbounded. 
This complicates the use
  of~\ref{b-b} in the following proof.

\begin{proof}
Assume that $\mu$ is doubling on $X$ and let
$0<r < \min\{\diam \Xplus, \diam \Xminus\}$.
Since $X$ is connected, the doubling condition implies that
$\mu(\{x_0\})=0$, cf.~\cite[Corollary~3.9]{BBbook}.
Since $\Xplus$ is connected we can find $\xt \in \Xplus$
  with $d(\xt,x_0)=\tfrac12 r$.
  If $y \in \Xminus$, then by~\eqref{eq-cond-d},
\begin{equation*}  
d(\xt,y) \ge \frac{1}{\Lambda} \bigl( d(\xt,x_0) + d(y,x_0)\bigr) 
\ge \frac{r}{2\Lambda}.
\end{equation*}
It thus follows that $\Bt:=B(\xt,r/2\Lambda) \subset \Brplus$.
Since $\Brminus \subset 3\Lambda \Bt$, the doubling property 
on $X$ yields
\[ 
  \mu(\Brminus) \leq \mu(3 \Lambda\Bt) \lesssim
  \mu(\Bt )
    \leq \mu(\Brplus).
\] 
The reverse inequality is shown in the same way, so \ref{b-b} holds.

To show \ref{b-a}, take any ball
$B=B(x,r)$ with $ x\in \Xplus$ and let $\Bplus=B \cap \Xplus$.
We can assume that $r\le\diam \Xplus$, since otherwise
  $\mu(2\Bplus)=\mu(\Xplus)=\mu(\Bplus)$. 
Letting $B':=B(x,r/4\La)$
and using the doubling condition on $X$, we get
\[ 
\mu (2\Bplus) \leq \mu (2B) \simle \mu (B')
= \mu(B'\cap\Xplus) + \mu(B'\cap\Xminus). 
\] 
If  $B'\cap\Xminus=\emptyset$, this immediately implies that
$\mu (2\Bplus)\simle \mu(\Bplus)$.
If  $B'\cap\Xminus \ne \emptyset$, it follows from~\eqref{eq-cond-d} 
that $d(x,x_0) <\tfrac14 r$
and so 
\[
\mu(B'\cap\Xminus) \le \mu(B_{r/2}^\limminus) \simle \mu(B_{r/2}^\limplus),
\]
where the last inequality follows from~\ref{b-b}
when $\tfrac12r<\diam\Xminus=:d$, while 
\begin{equation}   \label{eq-for-r-ge-d}
\mu(B_{r/2}^\limminus) \le \mu(\Xminus) = \mu(B_{2d}^{\limminus})
   \simle \mu(B_{d/2})
   \simeq \mu(B_{d/2}^{\limplus})
   \le \mu(B_{r/2}^{\limplus})
\end{equation}
otherwise, by~\ref{b-b} and the doubling property on $X$.
Since $B_{r/2}^{\limplus}\subset \Bplus$, this proves the doubling property
on $\Xplus$, and the argument for $\Xminus$ is similar.
Thus, \ref{b-a} holds.

Conversely, assume that~\ref{b-a} and~\ref{b-b} hold. 
Again, the connectedness of $\Xpm$ and the doubling property of $\mu$ imply that
$\mu(\{x_0\})=0$.
Let $B= B(x,r)$ be a ball in $X$ and assume without loss of generality 
that $x \in \Xplus$. 
If $d(x,x_0) \le \frac 12 r$, we see that $B_{r/2} \subset B$ and $2B \subset B_{3r}$,
and hence by the doubling property on $\Xplus$ and $\Xminus$,
\[
  \mu(2B)
  \le \mu(B_{3r}^{\limplus}) + \mu(B_{3r}^{\limminus}) 
  \simle \mu(B_{r/2}^{\limplus}) + \mu(B_{r/2}^{\limminus}) 
  \le \mu(B).
\]
Assume therefore that $d(x,x_0)>\frac 12 r$.
Using the doubling property on $\Xplus$ we have
\begin{equation}   \label{eq-split-2B-pm}
\mu(2B) = \mu(2B\cap \Xplus) + \mu(2B\cap \Xminus)
\simle \mu(B\cap \Xplus) + \mu(2B\cap \Xminus).
\end{equation}
If $2B\cap \Xminus=\emptyset$, we are done.
Otherwise, \eqref{eq-cond-d} implies that $d(x,x_0) < 2\Lambda r$ and
$2B\cap \Xminus \subset B_{4\La r}^{\limminus}$. 
Since $\diam \Xplus \ge d(x,x_0) > \frac12 r$, we see 
that
\begin{equation}  \label{eq-B-r/2-pm}
\mu(2B\cap \Xminus) \le \mu(B_{4\La r}^{\limminus})
  \simle \mu(B_{r/2}^{\limminus}) 
 \simle \mu(B_{r/2}^{\limplus}),
\end{equation}
where the last inequality follows from \ref{b-b}
and from~\eqref{eq-for-r-ge-d} with
$B_{d/2}$ replaced by $B_{d/2}^{\limminus}$.
Inserting \eqref{eq-B-r/2-pm} into \eqref{eq-split-2B-pm}, and noting that
\[
\mu(B_{r/2}^{\limplus}) \le \mu(3\La B \cap \Xplus)
  \simle \mu(B \cap \Xplus)
  \le \mu(B),
\]
shows that $\mu(2B)\simle\mu(B)$ and concludes the proof.
\end{proof}

We now turn to Theorem~\ref{thm-general-PI}.
Observe that
\[
   \cpX(\{x_0\}, \Br) = \cpXplus( \{x_0\}, \Brplus)+ \cpXminus( \{x_0\}, \Brminus).
\]

It follows from Lemma~\ref{lem-cp-cond-char} below,
that when both $\Xplus$ and $\Xminus$ are bounded,
condition~\ref{c-c-both} in Theorem~\ref{thm-general-PI}
can equivalently be replaced by the
condition
\begin{equation*}  
  \cpXplus(\{x_0\}, \Brpm)   \simeq r^{-p}\mu(\Brpm)
  \quad \text{for all }  
0 < r < \tfrac{1}{4} \diam \Xpm.
\end{equation*}
On the other hand, Example~\ref{ex-Q-vs-Qo} below shows that
this is not the case when (exactly) one of $\Xplus$ and $\Xminus$ is bounded.

The following lemma will make it possible to lift the Poincar\'e
inequality from  small to large sets when glueing $\Xplus$ and $\Xminus$
together.

\begin{lem}    \label{lem-semilocal}
{\rm (Bj\"orn--Bj\"orn~\cite[Lemma~4.11]{BBsemilocal})}
Let $q\ge1$ and $A,E\subset X$ be such that 
\[
\mu(A\cap E)\ge \theta \mu(E)
\]
for some $\theta>0$.
Also assume that for some $M\ge0$ and a measurable function~$u$,
\[ 
\|u-u_A\|_{L^q(A)} \le M
\quad \text{and} \quad
\|u-u_E\|_{L^q(E)} \le M.
\] 
Then
\[
\|u-u_{A\cup E}\|_{L^q(A\cup E)} \le 4(1+\theta^{-1/q})M.
\]
\end{lem}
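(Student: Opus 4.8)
The plan is to estimate $\|u-u_{A\cup E}\|_{L^q(A\cup E)}$ by the triangle inequality, replacing the mean $u_{A\cup E}$ by the mean $u_A$ and paying for the difference $|u_A-u_{A\cup E}|$. First I would write, for any constant $c$,
\[
\|u-u_{A\cup E}\|_{L^q(A\cup E)} \le \|u-c\|_{L^q(A\cup E)}
\le \|u-c\|_{L^q(A)} + \|u-c\|_{L^q(E)},
\]
where the first inequality uses the standard fact that the mean value minimizes the $L^q$-distance to constants (via Jensen/Hölder), and the second uses $(\alpha+\beta)^{1/q}\le\alpha^{1/q}+\beta^{1/q}$ applied to $\int_A+\int_E\ge\int_{A\cup E}$. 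Taking $c=u_A$ kills the first term's deviation but leaves $\|u-u_A\|_{L^q(E)}$, which I would split once more as $\|u-u_E\|_{L^q(E)} + \|u_E-u_A\|_{L^q(E)} \le M + \mu(E)^{1/q}|u_E-u_A|$.

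The key step is then to bound $|u_E-u_A|$. Here the overlap hypothesis $\mu(A\cap E)\ge\theta\mu(E)$ enters: on $A\cap E$ one has
\[
|u_E-u_A| \le \Bigl(\vint_{A\cap E}|u_E-u_A|^q\,d\mu\Bigr)^{1/q}
\le \frac{1}{\mu(A\cap E)^{1/q}}\bigl(\|u-u_E\|_{L^q(A\cap E)} + \|u-u_A\|_{L^q(A\cap E)}\bigr)
\le \frac{2M}{(\theta\mu(E))^{1/q}},
\]
since $A\cap E\subset A$ and $A\cap E\subset E$, so both $L^q(A\cap E)$-norms are controlled by the respective global bounds $M$. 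Feeding this back gives $\mu(E)^{1/q}|u_E-u_A|\le 2\theta^{-1/q}M$, and collecting the pieces yields
\[
\|u-u_{A\cup E}\|_{L^q(A\cup E)} \le \|u-u_A\|_{L^q(A)} + M + 2\theta^{-1/q}M \le 2M + 2\theta^{-1/q}M,
\]
which is even a bit better than the stated $4(1+\theta^{-1/q})M$; in any case the stated bound follows.

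I do not expect a genuine obstacle here — the argument is a routine "telescoping of averages" over the overlap — but the one point requiring a little care is the first inequality $\|u-u_{A\cup E}\|_{L^q(A\cup E)}\le\|u-c\|_{L^q(A\cup E)}$ for arbitrary $c$, i.e.\ that averaging is a contraction toward the mean in $L^q$; this is where Jensen's inequality (equivalently Hölder with exponents $q$ and $q/(q-1)$) is invoked, and it is the only place the hypothesis $q\ge1$ is used. Everything else is the triangle inequality together with the monotonicity of $\int_S|\cdot|^q$ in the domain $S$. Since this lemma is quoted from \cite{BBsemilocal}, it would suffice to give this short self-contained argument or simply cite it.
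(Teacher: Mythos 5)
Your overall architecture is the right one (and surely the one behind \cite[Lemma~4.11]{BBsemilocal}, which this paper only cites): compare $u$ to the single constant $u_A$ on all of $A\cup E$, and control $|u_E-u_A|$ through the overlap $A\cap E$ using Jensen's inequality and $\mu(A\cap E)\ge\theta\mu(E)$. That key step is correct as you wrote it. However, your very first inequality,
\[
\|u-u_{A\cup E}\|_{L^q(A\cup E)}\le\|u-c\|_{L^q(A\cup E)}\quad\text{for every constant }c,
\]
is false for general $q\ge1$: the mean minimizes the $L^q$-distance to constants only for $q=2$ (for $q=1$ the minimizer is a median). Concretely, on $[0,1]$ with Lebesgue measure, $q=1$ and $u=\chi_{[0,1/3]}$ one has $\|u-u_{[0,1]}\|_{L^1}=\tfrac49$ but $\|u-0\|_{L^1}=\tfrac13<\tfrac49$. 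This is why your final constant $2(1+\theta^{-1/q})$ comes out ``better'' than the stated one --- the improvement is an artifact of the false step.

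The repair is standard and recovers exactly the lemma's constant: replace the minimality claim by the quasi-minimality
\[
\|u-u_S\|_{L^q(S)}\le\|u-c\|_{L^q(S)}+\mu(S)^{1/q}|u_S-c|\le 2\,\|u-c\|_{L^q(S)},
\]
where the second inequality is Jensen/H\"older applied to $|u_S-c|\le\vint_S|u-c|\,d\mu$ --- the same device you already use to bound $|u_E-u_A|$. Running your argument with $S=A\cup E$, $c=u_A$ and this extra factor $2$ gives
\[
\|u-u_{A\cup E}\|_{L^q(A\cup E)}\le 2\bigl(M+M+2\theta^{-1/q}M\bigr)=4(1+\theta^{-1/q})M,
\]
which is precisely the stated bound (note $4=2\cdot 2$). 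So the proof is salvageable with a one-line correction, but as written the first step does not hold.
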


\begin{proof}[Proof of Theorem~\ref{thm-general-PI}]
  Assume 
that $\mu$ supports a $(q,p)$-Poincar\'e inequality on $X$
with dilation $\la$.
We first obtain \ref{c-b}.
Let $\xpm\in \Xpm$, $r > d:=d(\xplus,\xminus)$ and $B=B(\xplus,r)$.
If $d(\xplus,x_0) \ge \la r$, then $u=\chi_{\Xplus}$ has $0$ as an upper gradient
in $\la B$ (as $\Xpm$ are closed subsets of $X$) and since 
$B\cap\Xpm\ne\emptyset$,
this contradicts the $(q,p)$-Poincar\'e inequality.
Hence $d(\xplus,x_0) < \la r$.
Letting $r \to d$ shows that $d(\xplus,x_0) \le \la d$.
Similarly, $d(x_0,\xminus) \le \la d$,
and thus
\ref{c-b}
holds with $\La=2\la$.

By \cite[Proposition~4.2]{BBbook},
$X$ is connected, and so
by Proposition~\ref{prop-gen-doubl},
  $\mu$ is doubling on $\Xplus$ and on $\Xminus$, and
\begin{equation} \label{eq-Br-comparable}
 \mu(\Brplus) \simeq \mu(\Brminus)
 \quad \text{for }0<r < \min\{\diam \Xplus, \diam \Xminus\}.
\end{equation} 
This also implies 
that $\mu(\{x_0\})=0$, see
\cite[Corollary~3.9]{BBbook}.

We next wish to obtain \ref{c-a}.
To do so, let
$\xplus \in \Xplus$,
$B = B(\xplus,r)$ and $\Bplus=B \cap \Xplus$.
We already 
know that condition~\eqref{eq-cond-d}
referred to in~\ref{c-b}  holds
with $\La=2\la$.
Next, let  $u$ be an integrable function on $\Xplus$ and $g$ be an upper
gradient of $u$. 
We may assume that $u(x_0)=0$.
Extend $u$ and $g$ as $0$ on $\Xminus \setm \{x_0\}$.
It then follows that $g$ is an upper gradient of $u$ in $X$.

In order to obtain \ref{c-a}  we will consider different cases.
We first note that when $\mu(B) \simle \mu(\Bplus)$
holds,
a simple application of the triangle inequality
implies that  
\begin{equation} \label{eq-417a}
\vint_{\Bplus} |u-u_{\Bplus}|^q \, d\mu
\le 2^q\vint_{\Bplus} |u-u_B|^q \, d\mu
\simle \vint_{B} |u-u_{B}|^q \, d\mu.
\end{equation}
Applying the $(q,p)$-Poincar\'e inequality on $X$, using that $g=0$ on $\Xminus$
 and that $\mu (\lambda \Bplus) \leq \mu(\lambda B)$, we have
\begin{equation} \label{eq-417b}
\biggl(\vint_{B} |u-u_{B}|^q \, d\mu \biggr)^{1/q}
\lesssim r \biggl( \vint_{\lambda B} g^p \, d\mu \biggr)^{1/p}
\leq  r \biggl( \vint_{\lambda \Bplus} g^p \, d\mu \biggr)^{1/p},
\end{equation}
and combining this with~\eqref{eq-417a}
yields the desired
$(q,p)$-Poincar\'e inequality for $\Bplus$.

Next we note that if $\Lambda r \leq d(x_0, \xplus)$, then for any
$\xminus \in \Xminus$ we have $d(\xplus, \xminus) \geq r$ (using \ref{c-b}),
and so $B= \Bplus$. Thus $\mu(\Bplus) =\mu(B)$  holds in this case, immediately 
yielding the $(q,p)$-Poincar\'e inequality for such balls. 
Suppose therefore that $\Lambda r > d(x_0, \xplus)$.
For any $x \in \Brplus$ we have
\[
d(x,\xplus) \leq d(x, x_0) + d(x_0, \xplus) \leq 2\Lambda r,
\]
showing that $\Brplus \subset 2\Lambda \Bplus$.
A similar estimate shows
that $B \subset 2\Lambda B_r$. 
We now consider the following cases.

\emph{Case}~1. $r \le  \min\{\diam \Xplus, \diam \Xminus\}$.
Using the
last two inclusions, together with $\mu(B_{r/2}) \simeq \mu(B_{r/2}^{\limplus})$
from \eqref{eq-Br-comparable},
and the doubling property (on $X$ followed by $\Xplus$), 
we obtain that
\[
\mu(B) \le \mu(2\La B_r)\lesssim
  \mu(B_{r/2}) \simeq \mu(B_{r/2}^{\limplus}) \le \mu(2\Lambda \Bplus)\lesssim \mu(\Bplus).
\]
Thus the $(q,p)$-Poincar\'e inequality for $\Bplus$ follows
by~\eqref{eq-417a} and~\eqref{eq-417b} in this case.

\emph{Case}~2. $\dminus:=  \diam \Xminus < r \le \diam \Xplus$.
This time the above
two inclusions,
the doubling property on 
$X$, $\Xplus$ and $\Xminus$, 
\eqref{eq-Br-comparable} applied with the radius 
$\tfrac{1}{2}\dminus <  \min\{\diam \Xplus, \diam \Xminus\}$,
and the equality $\Xminus= B_{2\dminus}^{\limminus}$
yield 
\[
  \mu(B) \leq 
  \mu(2\Lambda \Brplus) + \mu(\Xminus)
  \lesssim  \mu(\Brplus) + \mu(B_{\dminus/2}^{\limminus}) 
\lesssim  \mu(\Brplus) \le 
\mu( 2 \Lambda \Bplus) \lesssim  \mu( \Bplus),
\]
so the  $(q,p)$-Poincar\'e inequality for $\Bplus$ follows 
by~\eqref{eq-417a} and \eqref{eq-417b} for this case as well.

\emph{Case}~3.
$\dminus:=  \diam \Xminus \leq \diam \Xplus <  r$.
Then $\Bplus = \Xplus$ and $\Xminus= B_{2\dminus}^{\limminus}$. Thus
\[
\mu(B)
\le \mu(\Xplus) + \mu(\Xminus)
\lesssim  
\mu(\Xplus) + \mu(B_{\dminus/2}^{\limminus}) \simeq  \mu(\Xplus) =
\mu(\Bplus),
\]
so
we get the $(q,p)$-Poincar\'e inequality for $\Bplus$ as before.

\emph{Case}~4.
$\dplus:=  \diam \Xplus < \min\{r, \diam \Xminus\}$.
Let $\rho=\min\{r,2\dplus\}$.
In this case we 
note that
\[
\mu(\Bplus)  =
\mu(\Xplus) \geq  \mu(B^{\limplus}_{\dplus/2}) 
\simeq  \mu(B_{\dplus/2}) \gtrsim  \mu(B_{\rho}).
\]
Since
also
$\Bplus  = \Xplus \subset B_{\rho}$, we
can apply the same reasoning as in \eqref{eq-417a}--\eqref{eq-417b},
  with $B$ replaced by $B_{\rho}$, to obtain
the desired $(q,p)$-Poincar\'e inequality for $\Bplus$
with dilation $\la$, 
which thus has
been shown to hold on $\Xplus$.
Similarly $\mu$ supports a $(q,p)$-Poincar\'e inequality on $\Xminus$,
i.e.\ \ref{c-a} holds.

To verify \ref{c-c-both}, let
$0 <r< \tfrac{1}{4} \min\{\diam \Xplus, \diam \Xminus\}$
and let $u \in \Np(\Xplus)$ be a function 
admissible for testing $\cpXplus( \{x_0\}, \Brplus)$
and such that $0 \le u \le 1$.
In particular, $u(x_0)=1$ and $u(x)=0$ on $\Xplus \setm \Brplus$.
Consider the function $v = 1-u$, extended by $0$ 
to $X_\limminus$.
Let $g$ be an upper gradient of $u$ on $\Xplus$.
Then it is easily verified that $g$, extended by $0$ to $X_\limminus\setm\{x_0\}$,
is an upper gradient of $v$.
Testing the \p-Poincar\'e inequality
(which follows from the $(q,p)$-Poincar\'e inequality 
  and H\"older's inequality)
on $B_{2r}$ with $v$,
and using 
\eqref{eq-Br-comparable} and the doubling property,
shows that
\[ 
 \int_{B_{2r}} |v-v_{B_{2r}}| \, d\mu 
 \simle r \mu(B_{2r}) \biggl( \vint_{\lambda B_{2r}} g^{p} \,d\mu \biggr)^{1/p} 
 \simle r \mu(\Brplus)^{1-1/p} \biggl( \int_{\Brplus} g^{p} \,d\mu \biggr)^{1/p}.
\] 

Now, depending on whether $v_{B_{2r}}\ge\frac12$ or $v_{B_{2r}}\le\frac12$,
the left-hand side is estimated as
\[
\int_{B_{2r}} |v-v_{B_{2r}}| \, d\mu \ge \frac12 \mu(\Brminus) \simeq \mu(\Brplus)
\]
using \eqref{eq-Br-comparable},
or as 
\[
\int_{B_{2r}} |v-v_{B_{2r}}| \, d\mu \ge \frac12 \mu(B_{2r}^{\limplus} \setm B_{r}^{\limplus}) 
\simeq \mu(\Brplus),
\]
using 
the doubling property 
and the connectedness of $\Xplus$, 
cf.~\cite[Lemma~3.7]{BBbook}.

Taking infimum over all such $u$ and $g$ shows the lower bound for $\cpXplus$
in \ref{c-c-both},
while $\cpXminus$ is treated similarly.
The corresponding upper bounds
follow from Lemma~\ref{lem-cp-cond-char} below.

\medskip

Conversely, assume that \ref{c-a}--\ref{c-c-both} hold.
Then $\Xplus$ and $\Xminus$ (and so $X$) are connected,
  by \cite[Proposition~4.2]{BBbook}.
As \ref{c-b} holds, $\mu$ is doubling also on $\Xplus$ and $\Xminus$
and \eqref{eq-Br-comparable} holds,
by Proposition~\ref{prop-gen-doubl}. 
Let $B=B(x,r)$ be a ball and $\Bpm=B \cap \Xpm$.

Let $u$ be an integrable function on $X$ with upper gradient $g$.
Without loss of generality we may assume that $u(x_0)=0$.
Let $\la$ be a common dilation constant for the \p-Poincar\'e
inequalities on $\Xplus$ and $\Xminus$.

\emph{Case}~1.
\emph{$\Bplus=\emptyset$ or $\Bminus=\emptyset$.}
The cases are similar so we may assume 
the latter, i.e.\ $B \subset \Xplus$, in which case
\[
  \biggl( \vint_{B} |u-u_{B}|^q \,d\mu \biggr)^{1/q} 
 \lesssim r\biggl(\vint_{\la \Bplus} g^p \,d\mu\biggr)^{1/p}  
\le  \frac{r}{\mu(\la \Bplus)^{1/p}}
   \biggl(\int_{\la B} g^p \,d\mu\biggr)^{1/p}.  
\]
Since
$\mu(\la B) \simle \mu(B) \le \mu(\la \Bplus)$,
this concludes the proof of the Poincar\'e inequality for the ball $B$.

\emph{Case}~2a. \emph{$x=x_0$ and
\begin{equation*} 
r < r_0:=\tfrac14\min\{\diam \Xplus, \diam \Xminus\}.
\end{equation*}
}
(Here $r_0=\infty$ is allowed.)

Assumption \ref{c-c-both} and
Maz$'$ya's inequality (see \cite[Theorem~6.21]{BBbook})
for $\tfrac12\Bpm$, applied in $\Xplus$ and $\Xminus$ separately,
together with the doubling condition on $\Xpm$, 
yield
\[
 \biggl( \vint_{\Bplus} |u|^q \,d\mu  \biggr)^{1/q}
\simle \biggl(\frac{1}{\cpXplus( \{x_0\}, \Bplus)} 
\int_{\la \Bplus} g^p \,d\mu \biggr)^{1/p} 
 \simeq  r\biggl(\vint_{\la \Bplus} g^p \,d\mu\biggr)^{1/p}   
\]
and, analogously, 
\[
  \biggl( \vint_{\Bminus} |u|^q \,d\mu \biggr)^{1/q}
 \simle  r\biggl(\vint_{\la \Bminus} g^p \,d\mu\biggr)^{1/p}.
\]
Put together we have, using
also the triangle inequality,
that
\begin{align*}
\vint_{ B} |u-u_{ B}|^q \,d\mu     
&\le 2^q\vint_{B} |u|^q \,d\mu  
\lesssim \vint_{\Bplus} |u|^q \,d\mu 
          + \vint_{\Bminus} |u|^q \,d\mu   \\
& \simle r^q\biggl(\vint_{\la \Bplus} g^p \,d\mu\biggr)^{q/p}   +
r^q\biggl(\vint_{\la \Bminus} g^p \,d\mu\biggr)^{q/p}  \\
&\simle r^q \biggl(\vint_{\la B} g^p \,d\mu\biggr)^{q/p}, 
\end{align*}
where in the last step we used that
$\mu(\la \Bpm) \simeq \mu(\Bpm) \simeq \mu(B) \simeq \mu(\la B)$,
by \eqref{eq-Br-comparable}.

\emph{Case}~2b. \emph{$x=x_0$ and $r \ge r_0$.}
Then at least one of $\Xplus$ and $\Xminus$ is bounded and we may assume 
that $4r_0=\diam \Xplus \le \diam \Xminus$.
Let $B'=B_{r_0/2}$ and $B'_\limpm=B' \cap \Xpm$.
By \ref{c-a} and case~2a,
we already know that the $(q,p)$-Poincar\'e inequality holds 
for $\Bpm$ and $B'$.

Let $A=B' \cup \Bminus$.
As $\mu(B' \cap \Bminus) = \mu(\Bpminus) \simeq \mu(B')$,
by \eqref{eq-Br-comparable},
it follows from  Lemma~\ref{lem-semilocal} that
\[ 
   \int_{A} |u-u_{A}|^q \,d\mu     
   \simle  \int_{B'} |u-u_{B'}|^q \,d\mu            
    + \int_{\Bminus} |u-u_{\Bminus}|^q \,d\mu.
\] 
Similarly, $B=A \cup \Bplus$ and 
\[
\mu(A \cap \Bplus) = \mu(B'_\limplus) \simeq \mu(\Xplus) \ge \mu(\Bplus),
\]
since $\Xplus=5 B'_\limplus$ is bounded.
A second application of Lemma~\ref{lem-semilocal} then shows that 
\begin{align}   
   \int_{B} |u-u_{B}|^q \,d\mu     
  &\simle  \int_{A} |u-u_{A}|^q \,d\mu             
    +\int_{\Bplus} |u-u_{\Bplus}|^q \,d\mu 
    \label{eq-split-PI-B1-B+-2}
    \\
  & \simle \int_{B'} |u-u_{B'}|^q \,d\mu            
    + \int_{\Bminus} |u-u_{\Bminus}|^q \,d\mu
    + \int_{\Bplus} |u-u_{\Bplus}|^q \,d\mu.\nonumber
\end{align}
The $(q,p)$-Poincar\'e inequality for $B'$ (obtained in case~2a),
together with the doubling property of $\mu$,
yields
\[
\biggl( \int_{B'} |u-u_{B'}|^q \,d\mu  \biggr)^{1/q}  
\simle r_0 \mu(B')^{1/q-1/p} \biggl(\int_{\la B'} g^p \,d\mu\biggr)^{1/p}.     
\]
Similarly, 
using the $(q,p)$-Poincar\'e inequality for $\Bpm$
(from \ref{c-a}),
\[
\biggl( \int_{\Bplus} |u-u_{\Bplus}|^q \,d\mu \biggr)^{1/q}  
 \simle r_0 \mu(\Bplus)^{1/q-1/p} \biggl(\int_{\la \Bplus} g^p \,d\mu\biggr)^{1/p} 
\]
and 
\[
\biggl( \int_{\Bminus} |u-u_{\Bminus}|^q \,d\mu \biggr)^{1/q}  
 \simle r \mu(\Bminus)^{1/q-1/p} \biggl(\int_{\la \Bminus} g^p \,d\mu\biggr)^{1/p}. 
\]
Now, by the doubling property of $\mu$,
the boundedness of $\Xplus$
and 
\eqref{eq-Br-comparable},
\[
\mu(B') \simeq \mu(\Bpplus) \simeq \mu(\Bplus) \simeq \mu(\Bpminus)
\simle \mu(\Bminus) \simeq
\mu(B).
\]
Inserting these
estimates into \eqref{eq-split-PI-B1-B+-2}
and dividing by $\mu(B)\simeq\mu(\la B)$
shows the $(q,p)$-Poincar\'e inequality for $B$ when $1/q-1/p\ge0$,
i.e.\ for $q\le p$.

When $q>p$, 
the last inequality is not enough
to conclude the $(q,p)$-Poincar\'e inequality for $B$.
However, in this case, the
$(q,p)$-Poincar\'e inequality on $\Xminus$ and
\cite[Proposition~4.20]{BBbook} imply that 
\[
\frac{\mu(\Bpminus)}{\mu(\Bminus)} \simge \Bigl( \frac{r_0}{r} \Bigr)^{pq/(q-p)}
\]
and hence in~\eqref{eq-split-PI-B1-B+-2} we eventually get
\[
r_0 \mu(\Bpminus)^{1/q-1/p} \simle r \mu(\Bminus)^{1/q-1/p} 
    \simeq r \mu(B)^{1/q-1/p}.
\]

\emph{Case}~3.
\emph{Both $\Bplus$ and $\Bminus$ are nonempty and $x\ne x_0$.}
Then  $d(x,x_0) < \La r$, by \eqref{eq-cond-d}
(with $y \in B_\limmp$ if $x \in \Bpm$).
Hence 
\[
B \subset B'':=B_{(\La+1) r} \subset \la B'' \subset \la' B,
\]
where $\la' = \La + \la(\La+1)$.
Therefore, using the triangle inequality
and case~2, applied to $B''$,
we have
\begin{align*}
\vint_{B} |u-u_{B}|^q \,d\mu &\le 2^q \vint_{B} |u-u_{B''}|^q \,d\mu  
\simle \vint_{B''} |u-u_{B''}|^q \,d\mu  \\
& \simle r \biggl(\vint_{\la B''} g^p \,d\mu\biggr)^{q/p}
 \simle r \biggl(\vint_{\la' B} g^p \,d\mu\biggr)^{q/p},
\end{align*}
which shows the $(q,p)$-Poincar\'e inequality for $B$, with dilation constant  $\la'$.

\medskip

It remains to observe that the last part with \ref{c-d} now follows directly
from Theorem~\ref{thm-p>1-uQ}, applied to $\Xpm$.
Note that \ref{c-a}--\ref{c-c-both} of Theorem~\ref{thm-general-PI} are
used in the proof Theorem~\ref{thm-p>1-uQ}, but not
\ref{c-d}.
\end{proof}

\begin{remark}  \label{rmk-la}
It follows from the proof above that if $\mu$ supports a $(q,p)$-Poincar\'e
inequality on $X$ with dilation $\la$ in Theorem~\ref{thm-general-PI},
then the $(q,p)$-Poincar\'e
inequalities on $\Xpm$ also hold with dilation $\la$.
The converse is not true, as can be seen
by letting
$\Xplus=\{(t,0) : t\in \R\}$
and $\Xminus = \{t,at) : t\in \R\}$ with small $a \ne 0$,
since then there are disconnected balls in $X=\Xplus\cup\Xminus$.
\end{remark}

If $q>p$ then the $(q,p)$-Poincar\'e inequality implies that $\mu$ is doubling,
by Theorem~1 in Alvarado--Haj\l asz~\cite{AlvaradoHaj}.
Theorem~\ref{thm-general-PI} and Proposition~\ref{prop-gen-doubl}
(and the fact that only connected spaces can support Poincar\'e
  inequalities)
therefore give the following characterization of the $(q,p)$-Poincar\'e inequality,
without presupposing that $\mu$ is doubling.

\begin{cor}
If $q>p$, then
$\mu$  supports a $(q,p)$-Poincar\'e inequality on $X$ 
if and only if
conditions \ref{c-a}--\ref{c-c-both} 
in Theorem~\ref{thm-general-PI} and \eqref{eq-Br-comparable}
hold.
\end{cor}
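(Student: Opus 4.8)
The plan is to derive the corollary from Theorem~\ref{thm-general-PI} and Proposition~\ref{prop-gen-doubl} by dispensing with the standing hypothesis that $\mu$ be doubling on $X$: when $q>p$ this doubling property is automatic, by Theorem~1 in Alvarado--Haj\l asz~\cite{AlvaradoHaj}, which says that on any metric measure space a $(q,p)$-Poincar\'e inequality with $q>p$ implies that $\mu$ is doubling. We also use the standard fact that a space supporting a Poincar\'e inequality is necessarily connected, by \cite[Proposition~4.2]{BBbook}.

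For the forward direction, suppose $\mu$ supports a $(q,p)$-Poincar\'e inequality on $X$. By \cite{AlvaradoHaj} the measure $\mu$ is doubling on $X$, so all hypotheses of Theorem~\ref{thm-general-PI} are met and that theorem yields \ref{c-a}--\ref{c-c-both}; in particular \eqref{eq-cond-d} (= \ref{c-b}) holds. Since $X$ is moreover connected, Proposition~\ref{prop-gen-doubl} applies, and its conclusion~\ref{b-b} is exactly \eqref{eq-Br-comparable}. (Alternatively, \eqref{eq-Br-comparable} is derived explicitly in the proof of Theorem~\ref{thm-general-PI}.)

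For the converse, assume \ref{c-a}--\ref{c-c-both} and \eqref{eq-Br-comparable}. The only ingredient missing before Theorem~\ref{thm-general-PI} can be invoked is that $\mu$ is doubling on $X$, and producing this from the listed conditions is the one place where a little care is needed — though it is still routine. From \ref{c-a} we have a $(q,p)$-Poincar\'e inequality on each of $\Xplus$ and $\Xminus$, so by \cite[Proposition~4.2]{BBbook} both $\Xpm$ are connected, and since they meet at $x_0$ the space $X=\Xplus\cup\Xminus$ is connected; moreover, applying \cite{AlvaradoHaj} separately on $\Xplus$ and on $\Xminus$ (using $q>p$) shows that $\mu$ is doubling on $\Xplus$ and on $\Xminus$, i.e.\ condition~\ref{b-a} of Proposition~\ref{prop-gen-doubl} holds. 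Since condition~\ref{b-b} of that proposition is \eqref{eq-Br-comparable} and \eqref{eq-cond-d} is \ref{c-b}, Proposition~\ref{prop-gen-doubl} gives that $\mu$ is doubling on $X$. Now Theorem~\ref{thm-general-PI} applies and, as \ref{c-a}--\ref{c-c-both} hold, it yields a $(q,p)$-Poincar\'e inequality on $X$. The only thing to watch throughout is the correct matching of the enumerated conditions of the corollary, of Proposition~\ref{prop-gen-doubl}, and of Theorem~\ref{thm-general-PI}, together with the observation that the doubling hypothesis on $X$ is free in the regime $q>p$.
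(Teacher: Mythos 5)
Your proposal is correct and follows essentially the same route the paper indicates in the paragraph preceding the corollary: use Alvarado--Haj\l asz to get the doubling property for free when $q>p$ (on $X$ in the forward direction, on $\Xplus$ and $\Xminus$ separately in the converse), combine with connectedness from the Poincar\'e inequality, and then invoke Proposition~\ref{prop-gen-doubl} and Theorem~\ref{thm-general-PI}. No gaps; you have merely written out in full what the paper leaves implicit.
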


\section{The capacity condition~\texorpdfstring{\eqref{eq-uQ-cp-intro}}{(1.4)}
and Theorem~\texorpdfstring{\ref{thm-p>1-uQ}}{1.3}}
\label{sect-uQ}

In this section we study the capacity condition 
appearing in 
Theorem~\ref{thm-general-PI}\,\ref{c-c-both}.
We will consider it within the 
metric space $X$, but the results readily apply to $\Xpm$ in 
Section~\ref{sect-general} as well.
As before $x_0 \in X$ is a designated point and $B_r=B(x_0,r)$.

\begin{lem} \label{lem-cp-cond-char}
Assume that $\mu$ is doubling on $X$.
Let $0<R_0\le\infty$ and
consider the following statements\/\textup{:}
\begin{enumerate}
\item \label{d-a}
$  \cpX( \{x_0\}, \Br)  \simeq r^{-p}\mu(\Br)$
  for all  $0< r < R_0$\textup{;}
\item  \label{d-b}
$  \cpX( \{x_0\}, \Br)  \simge r^{-p}\mu(\Br)$
  for all $0< r < R_0$\textup{;}
\item  \label{d-c}
 $ \cpX(B_\rho,B_r) \simeq r^{-p}\mu(\Br)$
  for all $0 < 2 \rho <r < R_0$\textup{;}
\item  \label{d-d}
 $ \cpX(B_\rho,B_r) \simge r^{-p}\mu(\Br)$
  for all $0 < \rho <r < R_0$.
\end{enumerate}
Then \ref{d-a} $\eqv$ \ref{d-b} $\imp$ \ref{d-c} $\eqv$ \ref{d-d}.

If there is a locally compact open set $G \ni x_0$,
then \ref{d-a}--\ref{d-d} are equivalent.

If $\mu$ supports a \p-Poincar\'e inequality on $X$,
$0 < R_1,R_2  < \infty$ 
and 
$R_1,R_2  \le \tfrac{1}{4}\diam X$ 
then each of the conditions \ref{d-a}--\ref{d-d} with 
$R_0=R_1$ is equivalent to the same condition with 
$R_0=R_2$.
\end{lem}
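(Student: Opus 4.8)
The plan is to establish the chain of implications \ref{d-a}$\eqv$\ref{d-b}$\imp$\ref{d-c}$\eqv$\ref{d-d} first, then handle the two extra clauses (the $G \ni x_0$ case and the $R_0$-independence), since each builds on the basic equivalences.

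For the basic chain, the implications \ref{d-a}$\imp$\ref{d-b}, \ref{d-c}$\imp$\ref{d-d} and \ref{d-a}$\imp$\ref{d-c} are trivial (the latter because $\cpX(B_\rho,B_r)\ge\cpX(\{x_0\},B_r)$ by monotonicity of capacity). For \ref{d-b}$\imp$\ref{d-a} and \ref{d-d}$\imp$\ref{d-c} I need the matching \emph{upper} bounds $\cpX(\{x_0\},B_r)\simle r^{-p}\mu(B_r)$ and $\cpX(B_\rho,B_r)\simle r^{-p}\mu(B_r)$; these are produced by exhibiting an explicit admissible test function, namely (a truncation of) $u(x)=\max\{0,\min\{1,2-\dist(x,x_0)/r\}\}$ or more simply $u=1$ on $B_{r/2}$, $u=0$ off $B_r$, linear in between, which has upper gradient $g=(2/r)\chi_{B_r\setm B_{r/2}}$, giving $\cpX(\{x_0\},B_r)\le\cpX(B_{r/2},B_r)\le (2/r)^p\mu(B_r\setm B_{r/2})\le(2/r)^p\mu(B_r)$; the doubling property absorbs the constants. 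The only real step is \ref{d-d}$\imp$\ref{d-b} (hence $\imp$\ref{d-a}): here I would use a standard telescoping/chaining argument, writing $1 = \sum_{j} (u_j(x_0)-\text{boundary value})$ over the dyadic annuli $B_{2^{-j}r}\setm B_{2^{-j-1}r}$ and applying \ref{d-d} on each pair $(B_{2^{-j-1}r},B_{2^{-j}r})$ together with the doubling growth $\mu(B_{2^{-j}r})\simge (2^{-j})^{s}\mu(B_r)$ for the exponent $s$ from the reverse-doubling-type bound — actually the cleaner route is to note that an admissible function for $\cpX(\{x_0\},B_r)$ restricts to an admissible function for each $\cpX(B_\rho,B_r)$ with $\rho\to0$, so \ref{d-d} with $\rho\to0$ gives \ref{d-b} directly, provided one knows $\cpX(\{x_0\},B_r)=\lim_{\rho\to0}\cpX(B_\rho,B_r)$; establishing that limit (or at least $\ge$) is where I would invoke lower-semicontinuity/monotone-convergence of capacity, or more safely cite \cite[Lemma~2.5]{BBLeh1} as the paper itself does for the $R_0$-independence.

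For the clause ``if there is a locally compact open set $G\ni x_0$, then \ref{d-a}--\ref{d-d} are equivalent'', it suffices to prove \ref{d-d}$\imp$\ref{d-a} (the missing link being \ref{d-c}$\imp$\ref{d-b}). Local compactness of $G$ lets me use that $\cpX(\{x_0\},B_r)=\lim_{\rho\to0^+}\cpX(\clB_\rho,B_r)=\lim_{\rho\to 0^+}\cpX(B_\rho\cap G,B_r\cap G)$ — i.e.\ the capacity of the point is the decreasing limit of capacities of small closed balls, a standard fact that requires compactness of $\clB_\rho\cap G$ (Choquet-type continuity of capacity on compacta, cf.\ \cite[Theorem~6.19]{BBbook} or the corresponding result quoted earlier). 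Then \ref{d-d} applied to $\clB_\rho$ in place of $B_\rho$, passing to the limit $\rho\to0$, yields \ref{d-b}.

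For the final clause (equivalence of the conditions for $R_0=R_1$ versus $R_0=R_2$, under a \p-Poincar\'e inequality on $X$ with $R_1,R_2\le\tfrac14\diam X$): by symmetry assume $R_1<R_2$; one direction is immediate since a condition for all $r<R_2$ includes all $r<R_1$. For the converse I would fix $R_1\le r<R_2$ and need to compare $\cpX(\cdot,B_r)$ with $r^{-p}\mu(B_r)$ using only information at scale $R_1$. Here the Poincar\'e inequality is essential: it gives the Maz$'$ya-type inequality (\cite[Theorem~6.21]{BBbook}) bounding $\cpX(\{x_0\},B_r)$ from below by $\simge r^{-p}\mu(B_r)$ automatically once $\mu(\{x_0\})=0$ and $r\le\tfrac14\diam X$ — wait, more precisely it gives the \emph{lower} bound $\cpX(B_\rho,B_r)\simge r^{-p}\mu(B_r)$ for the \emph{larger} radii once we have it for one radius, via a chaining argument across the finitely many scales between $R_1$ and $R_2$ (only $\log_2(R_2/R_1)$ doublings) combined with the doubling property; and the upper bound always holds by the test function above. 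I expect \textbf{the main obstacle} to be precisely this last clause — specifically, carefully propagating the lower bound in \ref{d-d} from small radii to the finitely-many larger radii in $[R_1,R_2)$ while tracking that the implicit constants only degrade by a bounded factor depending on $R_2/R_1$ and the doubling constant — together with the correct invocation of the continuity-of-capacity result needed to pass from balls to the single point $\{x_0\}$ in the locally compact case.
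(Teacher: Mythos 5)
Your treatment of the basic chain coincides with the paper's: the forward implications are monotonicity of $\cpX$, and the reverse ones \ref{d-b}$\imp$\ref{d-a}, \ref{d-d}$\imp$\ref{d-c} come from the test function $u(x)=\min\{1,2(1-\dist(x,x_0)/r)_{\limplus}\}$ plus doubling. But you have misread the first claim: the lemma asserts only \ref{d-a}$\eqv$\ref{d-b}$\imp$\ref{d-c}$\eqv$\ref{d-d} in general, and deliberately does \emph{not} claim \ref{d-d}$\imp$\ref{d-b} without extra hypotheses, whereas you call \ref{d-d}$\imp$\ref{d-b} ``the only real step'' of the first part. Your first idea for it --- telescoping over dyadic annuli --- would fail in any case: lower bounds for the capacities of the nested condensers $(B_{2^{-j-1}r},B_{2^{-j}r})$ combine by the serial rule into an \emph{upper} bound for $\cpX(\{x_0\},B_r)$ when $p>1$, not a lower bound. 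Your fallback, the identity $\cpX(\{x_0\},B_r)=\inf_{0<\rho<r}\cpX(B_\rho,B_r)$, is indeed the crux, but it belongs to (and needs the hypothesis of) the locally compact case.

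In that case there is a genuine gap in your justification: you invoke ``Choquet-type continuity of capacity on compacta, cf.\ Theorem~6.19 in \cite{BBbook}'', but that theorem presupposes quasicontinuity of Newtonian functions (typically obtained from completeness plus doubling and a Poincar\'e inequality), and here $X$ is not assumed complete and no Poincar\'e inequality is assumed --- only doubling and the existence of a locally compact open $G\ni x_0$. The paper closes exactly this gap: it applies the Eriksson-Bique--Soultanis density theorem on a compact closed ball inside $G$ to get density of Lipschitz functions in $\Np(\clB)$, deduces quasicontinuity from \cite[Theorem~5.29]{BBbook}, and only then runs the argument of \cite[Theorem~6.19\,(vii)]{BBbook} to obtain $\cpX(\{x_0\},B_r)=\inf_{0<\rho<r}\cpX(B_\rho,B_r)$. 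Without this ingredient your step is unsupported. For the final clause the paper simply cites \cite[Lemma~5.5]{BMS} (or \cite[Lemma~11.22]{BBbook}), which is the comparison $\cpX(E,B_{R_1})\simeq\cpX(E,B_{R_2})$ under doubling and a Poincar\'e inequality; your plan to reprove it by chaining across the finitely many scales between $R_1$ and $R_2$ is plausible and is essentially the content of that lemma, but you leave it unexecuted and correctly flag it as the place where the work would be.
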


\begin{proof}
Clearly \ref{d-a}$\imp$\ref{d-b},
while the implications 
\ref{d-b}$\imp$\ref{d-d}  and \ref{d-c}$\imp$\ref{d-d}
follow directly from the monotonicity of $\cpX$.
The converse implications \ref{d-b}$\imp$\ref{d-a} and \ref{d-d}$\imp$\ref{d-c}
follow by 
testing the capacity with 
$u(x)=\min\{1,2(1-\dist(x,x_0)/r)_{\limplus}\}$.

Next,
assume that there is a locally compact open set $G \ni x_0$.
Theorems~1.5 and~1.9 in Eriksson-Bique--Soultanis~\cite{SEB-Soul},
applied to a compact $\clB:=\itoverline{B(x_0,r_0)}\subset G$, then 
imply that Lipschitz functions are dense in $\Np(\clB)$.
(For this, note that the doubling property of $\mu$ implies that 
$\clB$ has finite Hausdorff dimension.)
Theorem~5.29 in \cite{BBbook} then
shows that all functions in 
$\Np(\clB)$ are quasicontinuous.
In particular, if $u \in \Np(X)$, then $u$ is
quasicontinuous in $\clB$.
The proof of \cite[Theorem~6.19\,(vii)]{BBbook} then implies that
\[
\cpX({x_0},B_r) = \inf_{0<\rho<r}\cpX(B_\rho,B_r).
\]
and hence \ref{d-d} $\imp$ \ref{d-b}.

Finally,
  the last part follows directly
  from Lemma~5.5 in Bj\"orn--MacManus--Shanmugalingam~\cite{BMS}
  (or \cite[Lemma~11.22]{BBbook}).
  The proofs therein only require that
  $\mu$ is doubling and supports a \p-Poincar\'e inequality.
\end{proof}

We are now ready to prove Theorem~\ref{thm-p>1-uQ}.

\begin{proof}[Proof of Theorem~\ref{thm-p>1-uQ}]
If $p>\qmu$, Proposition~6.1 (if $R_0=\infty$) and
Theorem~1.1 (if $R_0<\infty$)
in~\cite{BBLeh1},
together with the monotonicity of $\cpX$,
show that \eqref{eq-uQ-cp-intro} holds.

Conversely, assume that \eqref{eq-uQ-cp-intro} holds
and let $\Xhat$ be the completion of $X$.
The metric $d$ extends directly to $\Xhat$ and as in
Bj\"orn--Bj\"orn~\cite{BBnoncomp} (see especially the corrigendum)
we obtain a Borel regular measure $\muhat$ on $\Xhat$ such that
$\mu(E \cap X)=\muhat(E)$ for every $\muhat$-measurable
set $E \subset \Xhat$.

It follows from Propositions~3.3 and~3.6 in \cite{BBnoncomp}
that $\muhat$ is doubling and supports a \p-Poincar\'e inequality on $\Xhat$.
It is easy to see that
\[
\cpXhat(\Bhat_\rho,\Bhat_r) \ge \cpX(B_\rho,B_r),
\]
where $\Bhatr$ is the ball in $\Xhat$ centred at $x_0$ and with radius $r$.
Thus, \eqref{eq-uQ-cp-intro} holds with $X$ replaced by $\Xhat$.
By \cite[Proposition~3.1]{BBbook}, $\Xhat$ is locally compact, 
and thus condition~\ref{d-a} in Lemma~\ref{lem-cp-cond-char}
holds with $X$ replaced by $\Xhat$.

First, assume that $R_0=\tfrac14 \diam X$ and
create a new metric space $X_0 =\Xhat_\limplus\cup \Xhat_\limminus$  made 
of two copies $\Xhat_\limpm$ of $\Xhat$ such that
$\Xhat_\limplus \cap \Xhat_\limminus = \{x_0\}$
and $d_{X_0}(\xplus,\xminus)=d(\xplus,x_0)+d(x_0,\xminus)$
if $\xpm \in \Xhat_\limpm$.
The measure $\muhat$ extends in an obvious way to $X_0$.
Since \ref{d-a} in Lemma~\ref{lem-cp-cond-char}
holds for $\Xhat$,
and   $X$ (and thus also $\Xhat$) is connected by \cite[Proposition~4.2]{BBbook},
it follows from Proposition~\ref{prop-gen-doubl} and
Theorem~\ref{thm-general-PI} (with $q=1$) that $\muhat$ is doubling and
supports a \p-Poincar\'e inequality on $X_0$.
(Note that we only use \ref{c-a}--\ref{c-c-both}
of Theorem~\ref{thm-general-PI}, without
  \ref{c-d}. The same is true below.)
By Keith--Zhong~\cite[Theorem~1.0.1]{KeZh}, there is $t<p$ such that
$\muhat$ supports a $t$-Poincar\'e inequality on $X_0$.
Hence, by 
Theorem~\ref{thm-general-PI}  again,
\[ 
\ctXhat(\Bhat_\rho,\Bhatr) \ge \ctXhat(\{x_0\},\Bhatr)  
\simeq r^{-t}\muhat(\Bhatr) 
\quad \text{for }
0 < \rho<r < \tfrac{1}{4}\diam \Xhat.
\]
Testing $\ctXhat(\Bhat_\rho,\Bhatr)$ with 
$u(x)=\min\{1,(2-\dist(x,x_0)/\rho)_{\limplus}\}$
shows that 
\[
\ctXhat(\Bhat_\rho,\Bhatr) \simle \frac{\muhat(\Bhat_\rho)}{\rho^{t}}
\quad \text{if } 0 < \rho \le \tfrac12 r.
\]
Since $\mu(B_r)=\muhat(\Bhatr)$ for all $r>0$,
comparing the last two estimates 
(and using that $\mu(B_\rho) \simeq \mu(B_r)$ when $\tfrac12 r < \rho <  r$)
implies that $p>t\ge\qmuhat=\qmu$.

Next, assume that $R_0<\tfrac14 \diam X$.
In particular, $R_0<\infty$.
If $X$ is bounded, then the equivalence between \eqref{eq-uQ-cp-intro}
and $p>\qmu$ follows from the case $R_0=\tfrac14 \diam X$, together with
the last part of Lemma~\ref{lem-cp-cond-char}.

If $X$ is unbounded, then 
there is a bounded open connected set $V \supset \Bhat_{5R_0}$ in
$\Xhat$
(since $\Xhat$ is quasiconvex, by e.g.\ \cite[Theorem~4.32]{BBbook}).
By Rajala~\cite[Theorem~1.1]{rajala}, there is
a uniform domain $G$ such that $\Bhat_{4R_0} \subset G \subset V$.
By
Bj\"orn--Shan\-mu\-ga\-lin\-gam~\cite[Lemmas~2.5, 4.2 and Theorem~4.4]{BjShJMAA},
$\muhat$ is doubling and supports a \p-Poincar\'e inequality on $G$. 
We refer to \cite{BjShJMAA} or \cite{rajala}
for the definition of uniform domains.

If $0 < \rho <r < R_0$,
then \eqref{eq-uQ-cp-intro} implies that
\[
    \cpG(\Bhat_\rho,\Bhatr)
    =  \cpXhat(\Bhat_\rho,\Bhatr)
    \ge \cpX(B_\rho,B_r)
    \simge r^{-p}\mu(\Br)
    = r^{-p}\muhat(\Bhatr),
\]
and so \eqref{eq-uQ-cp-intro} holds with $X$ replaced by $G$.
As $R_0\le\tfrac14\diam G$, an application of
the already settled bounded case
to $G$ instead of $X$, shows that $p>\qmusymb_{x_0,R_0}^{G,\muhat}=\qmu$.

The last part, concerning \eqref{eq-uQ-cp-x0intro},
follows in all cases directly from Lemma~\ref{lem-cp-cond-char}.
\end{proof}  

The following example shows that the range of $p$ in Theorem~\ref{thm-p>1-uQ}
can differ between finite and infinite $R_0$.

\begin{example}  \label{ex-Q-vs-Qo}
Let $n \ge 2$, $-n < \alp < 0$ and $d\mu=w \,dx$ on $\R^n$, where
\[
w(x)=\begin{cases}
    |x|^\alp, & \text{if }|x| \le 1, \\
    1, & \text{if }|x| \ge 1,
\end{cases}
\]
which is easily verified to be an $A_1$ weight (cf.~Proposition~\ref{prop-A1w})
and thus $1$-admissible. 
It is also rather straightforward that 
\[
\qmuoRn= \begin{cases}
         n+\al & \text{if } R_0<\infty, \\
         n & \text{if } R_0=\infty.
\end{cases}
\]
Since $n+\al<n$, this shows that the range of $p$ in Theorem~\ref{thm-p>1-uQ}
can be considerably larger for $R_0<\infty$ than for $R_0=\infty$.
\end{example}

\section{Bow-ties in \texorpdfstring{$\R^n$}{Rn}
     with \texorpdfstring{$T$}{T}-invariant weights}
\label{sect-T-inv}

From now on we  consider bow-ties $\Xbt$ in $\R^n$,
$n \ge 1$, as in Theorem~\ref{thm-intro-radial}, i.e.
\[
   \Xbt= \Xbtp \cup  \Xbtm,
   \quad \text{where }
   \Xbtpm=\{(x_1, \ldots, x_n)\in\R^n:\pm x_j \geq 0, \ j=1,\ldots,n \},
\]
are 
the positive and negative
hyperquadrants in $\R^n$.
We equip $\R^n$ with a Borel regular measure $d\mu = w \,dx$, 
where $w$ is a weight on $\R^n$.
The fixed point will be the origin $x_0=0$.

In this section we  deduce results valid for \emph{$T$-invariant weights},
i.e.\ weights such that $w \circ T= w$, where
$T:\R^n \to \Xbtp$ is given by
$T(x_1,\ldots,x_n)=(|x_1|,\ldots,|x_n|)$.
We will also say that $\mu$ is \emph{$T$-invariant} if $d\mu =w\,dx$,
  where $w$ is $T$-invariant.
In the next section we will turn to results for radial weights.

\begin{lem}\label{lem-gradient}
If $g:\Xbtp \to [0,\infty]$ is an
upper gradient of $u: \Xbtp \to [-\infty,\infty]$,
then $ \gh=g \circ T$ 
is an upper gradient of $\uh=u \circ T$ in $\R^n$.
\end{lem}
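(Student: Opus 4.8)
The plan is to verify the upper gradient inequality for $\gh = g\circ T$ and $\uh = u\circ T$ directly from the definition, by lifting an arbitrary curve in $\R^n$ to a curve in $\Xbtp$ via the map $T$. First I would fix a nonconstant compact rectifiable curve $\gamma\colon[0,l_\gamma]\to\R^n$, parameterized by arc length, and set $\gat = T\circ\gamma\colon[0,l_\gamma]\to\Xbtp$. The map $T(x_1,\dots,x_n)=(|x_1|,\dots,|x_n|)$ is $1$-Lipschitz, so $\gat$ is a (possibly constant) curve with length at most $l_\gamma$; in particular it is rectifiable, and after dropping to its arc-length parameterization we may integrate $g$ along it. The key endpoint identity is immediate: $\uh(\gamma(0)) = u(T(\gamma(0))) = u(\gat(0))$ and likewise at $l_\gamma$, so
\[
  |\uh(\gamma(0)) - \uh(\gamma(l_\gamma))| = |u(\gat(0)) - u(\gat(l_\gamma))| \le \int_{\gat} g\,ds,
\]
using that $g$ is an upper gradient of $u$ on $\Xbtp$ (this covers also the degenerate case where $\gat$ is constant, where both sides vanish).

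It then remains to compare $\int_{\gat} g\,ds$ with $\int_\gamma \gh\,ds = \int_\gamma g\circ T\,ds$. Since $T$ is $1$-Lipschitz, for the arc-length parameterization we have the pointwise bound on metric speeds $|\gat'(t)| \le |\gamma'(t)| = 1$ for a.e.\ $t$, hence the line integral over $\gat$ is dominated by the integral of $(g\circ T)(\gamma(t))\,|\gat'(t)|\,dt \le (g\circ T)(\gamma(t))\,dt$ over $[0,l_\gamma]$, which is exactly $\int_\gamma \gh\,ds$. Chaining the two displayed estimates gives $|\uh(\gamma(0))-\uh(\gamma(l_\gamma))| \le \int_\gamma \gh\,ds$, as required. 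One should also record that $\gh = g\circ T$ is Borel measurable, since $g$ is Borel and $T$ is continuous.

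The main technical obstacle is making the change-of-variables step rigorous when $\gat$ is not injective and may have stationary portions: one cannot literally parameterize $\gat$ by its own arc length and then relate that parameter to $t$ in a bijective way. The clean way around this is to avoid arc-length reparameterization of $\gat$ altogether and instead use the standard fact (see e.g.\ \cite[Section~2.9]{HKSTbook} or the discussion of line integrals over rectifiable curves in \cite{BBbook}) that for any curve $\eta$ and any nonnegative Borel $h$, $\int_\eta h\,ds = \int_0^{l_\eta} h(\eta(s))\,ds$ is computed via any Lipschitz parameterization $\sigma$ with metric derivative bounded by $1$ as $\int h(\sigma(t))\,|\sigma'(t)|\,dt$; applying this with $\eta = \gat$ and $\sigma = T\circ\gamma$ (which has metric derivative $\le 1$) immediately yields $\int_{\gat} g\,ds \le \int_0^{l_\gamma} (g\circ T)(\gamma(t))\,dt = \int_\gamma \gh\,ds$. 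With that lemma in hand the argument is routine; everything else is just unwinding definitions.
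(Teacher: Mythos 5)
Your proof is correct and follows essentially the same route as the paper: lift the curve via $T$, use the endpoint identity and the upper gradient inequality on $\Xbtp$, and then exploit the $1$-Lipschitzness of $T$ to bound $\int_{\gat}g\,ds$ by $\int_{\ga}\gh\,ds$. Your handling of the change-of-variables step via the metric derivative is just a more explicit version of the paper's remark that every subsegment of $\gat$ is no longer than the corresponding subsegment of $\ga$.
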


\begin{proof}
Observe first that as $g$ is a Borel function so is $\gh$.
Let $\ga:[0,l_\ga] \to \R^n$ be a curve in $\R^n$ and let
$\gat= T \circ \ga$, which is a curve in $\Xbtp$.
One can prove that 
$\gat$ is arc length parameterized,
but as we will not need this
we omit such a  proof.
It is enough 
that the arc length of every subsegment of $\ga$ is not less than the 
arc length 
of the corresponding subsegment in $\gat$,
which follows directly from the $1$-Lipschitzness of $T$.
Hence,
\[
     |\uh(\ga(0))-\uh(\ga(l_\ga))|
    = |u(\gat(0))-u(\gat(l_\ga))|
    \le  \int_{\gat} g \, ds
    \le \int_{\ga} \gh\, ds,
\]
showing that $\gh$ is an upper gradient of $\uh$. 
\end{proof}

\begin{lem}\label{lem-T} 
Assume that $\mu$ is $T$-invariant on $\R^n$.
If $r>0$, then
\[
   \cpRn(\{0\},B_r)=\inf_u \int_{B_r} g^p \, d\mu, 
\]
where the infimum is taken over all $T$-invariant 
functions $u$ such that $u(0) = 1$ and $u=0$ on $\R^n \setm  B_r$, 
and all $T$-invariant upper
gradients $g$ of $u$. 
\end{lem}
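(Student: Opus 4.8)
The plan is to prove the claimed equality by averaging over the group of coordinate sign changes. One inequality is immediate: every $T$-invariant pair $(u,g)$ over which the right-hand infimum is taken is in particular an admissible pair for $\cpRn(\{0\},B_r)$ in the sense of Definition~\ref{deff-varcap} (the requirement $u(0)=1$ being exactly $u=1$ on $\{0\}$), so the right-hand infimum is at least $\cpRn(\{0\},B_r)$. For the reverse inequality I would start from an arbitrary admissible pair $(u,g)$ for $\cpRn(\{0\},B_r)$; after truncating we may assume $0\le u\le1$, and we may assume $\int_{B_r}g^p\,d\mu<\infty$. Let $\Gamma=\{-1,1\}^n$ act on $\R^n$ by coordinate sign changes $\sigma(x_1,\dots,x_n)=(\sigma_1x_1,\dots,\sigma_nx_n)$, and set
\[
\bar u=\frac1{2^n}\sum_{\sigma\in\Gamma}u\circ\sigma,
\qquad
\bar g=\frac1{2^n}\sum_{\sigma\in\Gamma}g\circ\sigma.
\]

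The next step is to verify that $(\bar u,\bar g)$ is a legitimate $T$-invariant competitor. Each $\sigma\in\Gamma$ is a linear isometry of $\R^n$ fixing $0$, mapping $B_r$ onto $B_r$, and preserving $\mu$ (since $w$ is $T$-invariant we have $T\circ\sigma=T$, hence $w\circ\sigma=w$, and $|\det\sigma|=1$). Reindexing the group sum gives $\bar u\circ\tau=\bar u$ and $\bar g\circ\tau=\bar g$ for every $\tau\in\Gamma$, which is precisely $\bar u\circ T=\bar u$ and $\bar g\circ T=\bar g$; moreover $\bar u(0)=u(0)=1$ and $\bar u=0$ on $\R^n\setm B_r$ because $\sigma(\R^n\setm B_r)=\R^n\setm B_r$. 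Since each $u\circ\sigma$ lies in $\Np(\R^n)$ (composition with a $\mu$-preserving isometry) and $\Np(\R^n)$ is a vector space, $\bar u\in\Np(\R^n)$. Finally, $\bar g$ is an upper gradient of $\bar u$: each $g\circ\sigma$ is an upper gradient of $u\circ\sigma$ because $\sigma$ is an isometry, and for any curve $\ga$ the triangle inequality yields $|\bar u(\ga(0))-\bar u(\ga(l_\ga))|\le\frac1{2^n}\sum_{\sigma\in\Gamma}|u(\sigma\ga(0))-u(\sigma\ga(l_\ga))|\le\int_\ga\bar g\,ds$, where the bound $0\le u\le1$ keeps every term finite and the sum unambiguous.

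The last step is the energy estimate, and this is the point where choosing the \emph{average} rather than, say, the pointwise maximum over $\Gamma$ is essential. Since $p\ge1$, convexity of $t\mapsto t^p$ gives $\bar g^p\le\frac1{2^n}\sum_{\sigma\in\Gamma}(g\circ\sigma)^p$ pointwise, and integrating over $B_r$ while using that each $\sigma$ preserves both $\mu$ and $B_r$ yields
\[
\int_{B_r}\bar g^p\,d\mu\le\frac1{2^n}\sum_{\sigma\in\Gamma}\int_{B_r}(g\circ\sigma)^p\,d\mu=\int_{B_r}g^p\,d\mu.
\]
Hence the right-hand infimum is $\le\int_{B_r}g^p\,d\mu$ for every admissible pair $(u,g)$, and taking the infimum over all such pairs gives the reverse inequality, completing the proof. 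I do not expect a genuine obstacle; the only points requiring a little care are the reduction to $0\le u\le1$ (so that $\bar u$ is well defined and the upper gradient inequality is unambiguous), the observation that $T$-invariance of $w$ forces invariance under each reflection $\sigma\in\Gamma$, and the bookkeeping showing that $\bar u$ and $\bar g$ stay $T$-invariant after averaging.
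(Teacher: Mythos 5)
Your proof is correct, but it follows a different route from the paper's. You symmetrize by averaging $u$ and $g$ over the group $\Gamma=\{-1,1\}^n$ of coordinate reflections and then control the energy of the average via convexity of $t\mapsto t^p$ and the $\mu$-invariance of each $\sigma\in\Gamma$. The paper instead splits $\int_{B_r}g^p\,d\mu$ over the $2^n$ closed hyperquadrants, picks one quadrant $X_k$ carrying at most the fraction $2^{-n}$ of the energy, and replaces $(u,g)$ by $(u\circ T, g\circ T)$ built from the data on that single quadrant; the required upper-gradient transfer is supplied by Lemma~\ref{lem-gradient}, which rests on $T$ being $1$-Lipschitz (note $T$ is \emph{not} an isometry, so that lemma genuinely needs the curve-length comparison, whereas your reflections $\sigma$ are honest measure-preserving isometries and the transfer is immediate). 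Both arguments are loss-free: yours via Jensen's inequality, the paper's via the pigeonhole choice of the cheapest quadrant. What the paper's version buys is that the resulting competitor is the $T$-reflection of the data on a \emph{single} quadrant, which is exactly what is reused in Corollary~\ref{cor-anncap} to get the precise factors $2^n$ and $2^{n-1}$ relating $\cpRn$, $\cpXbtp$ and $\cpXbt$; your averaged competitor proves the lemma equally well but does not localize to one quadrant. The only points in your write-up that deserve explicit mention are already flagged by you: the truncation to $0\le u\le1$ so that $\bar u$ is everywhere defined, and the identification of $T$-invariance with $\Gamma$-invariance (for any $x$ there is $\sigma\in\Gamma$ with $\sigma(x)=T(x)$, so invariance under all of $\Gamma$ does give $\bar u\circ T=\bar u$).
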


\begin{remark} \label{rmk-T}
Corresponding results for
the bow-tie $\Xbt$ and for the hyperquadrants $\Xbtpm$ are proved in the same way.
Similar identities hold also if $\{0\}$ is replaced by
$B_\rho$ or $\clB_\rho$, where $0<\rho<r$.
\end{remark}

\begin{proof}
The statement follows from Definition~\ref{deff-varcap} except
for the assertion that it suffices to
consider only $T$-invariant $u$ and $g$. 
To this end,
let $\eps >0$ and let $u$ be a function
admissible for testing
  $\cpRn(\{0\},B_r)$,
with an upper gradient $g$ such that
\[
  \cpRn(\{0\},B_r) + \eps > \int_{B_r} g^p \, d\mu 
     = \sum_{j=1}^{2^n}    \int_{B_r \cap X_j} g^p\, d\mu,
\]
where $X_1,\ldots,X_{2^n}$ are
the $2^n$ closed coordinate hyperquadrants in $\R^n$.

There is thus $k$ such that
\[
\int_{B_r \cap X_k} g^p\, d\mu
\le 2^{-n} \int_{B_r} g^p\, d\mu. 
\]
Without loss of generality, $X_k=\Xbtp$.
Next, $v:=u \circ T$ is also 
admissible for testing 
$\cpRn(\{0\},B_r)$
and $\gt:=g \circ T$ is an upper gradient of $v$,
by Lemma~\ref{lem-gradient}.
Since $w$ is $T$-invariant, we get
\[
   \int_{B_r} \gt^p\, d\mu 
= 2^n    \int_{B_r \cap \Xbtp} g^p\, d\mu 
\le    \int_{B_r} g^p \,d\mu <    \cpRn(\{0\},B_r) + \eps, 
\]
and the conclusion follows after letting $\eps \to 0$. 
\end{proof}

\begin{cor}\label{cor-anncap}
Assume that $\mu$ is $T$-invariant on $\R^n$.
If $r>0$, then
\[
\cpRn( \{0\}, B_r)  
=2^n \cpXbtp( \{0\}, B_r)
=2^{n-1} \cpXbt( \{0\}, B_r).
\]
\end{cor}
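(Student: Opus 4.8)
The plan is to deduce Corollary~\ref{cor-anncap} from Lemma~\ref{lem-T} and its analogues for $\Xbtp$ and $\Xbt$ (as noted in Remark~\ref{rmk-T}), using the fact that $T$-invariant competitors interact nicely with the decomposition of $\R^n$ into the $2^n$ closed coordinate hyperquadrants $X_1,\dots,X_{2^n}$.

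First I would prove the identity $\cpRn(\{0\},B_r) = 2^n \cpXbtp(\{0\},B_r)$. Given a $T$-invariant $u$ and $T$-invariant upper gradient $g$ admissible for $\cpRn(\{0\},B_r)$ as in Lemma~\ref{lem-T}, the restrictions $u|_{\Xbtp}$ and $g|_{\Xbtp}$ are admissible for testing $\cpXbtp(\{0\},B_r\cap\Xbtp)$; since $u$ and $g$ are $T$-invariant, each of the $2^n$ hyperquadrants contributes the same amount to $\int_{B_r} g^p\,d\mu$, so $\int_{B_r} g^p\,d\mu = 2^n\int_{B_r\cap\Xbtp} g^p\,d\mu$, giving $\cpRn(\{0\},B_r) \ge 2^n\cpXbtp(\{0\},B_r)$. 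Conversely, given a competitor $u$ for $\cpXbtp(\{0\},B_r\cap\Xbtp)$ with upper gradient $g$, I would extend them to $\R^n$ by $T$-invariance, i.e.\ set $\tilde u = u\circ T$ and $\tilde g = g\circ T$; by Lemma~\ref{lem-gradient}, $\tilde g$ is an upper gradient of $\tilde u$ on $\R^n$, and $\tilde u$ is admissible for $\cpRn(\{0\},B_r)$ with $\int_{B_r}\tilde g^p\,d\mu = 2^n\int_{B_r\cap\Xbtp} g^p\,d\mu$, yielding the reverse inequality. (A small care point: $T$ maps $B_r$ onto $B_r\cap\Xbtp$ since $T$ preserves $|x|$, so the constraint sets match up correctly; also $\tilde u(0)=u(0)=1$ and $\tilde u=0$ off $B_r$.)

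Next I would prove $\cpRn(\{0\},B_r) = 2^{n-1}\cpXbt(\{0\},B_r)$, or equivalently $\cpXbt(\{0\},B_r) = 2\cpXbtp(\{0\},B_r)$. This runs exactly as above, noting that $\Xbt$ consists of two hyperquadrants ($\Xbtp$ and $\Xbtm$), and that $T$ restricted to $\Xbt$ maps onto $\Xbtp$ and is two-to-one (off the origin), so a $T$-invariant competitor on $\Xbt$ splits its energy equally between $\Xbtp$ and $\Xbtm$. Using the version of Lemma~\ref{lem-T} for $\Xbt$ (Remark~\ref{rmk-T}) to restrict attention to $T$-invariant competitors, and using Lemma~\ref{lem-gradient} (whose proof only uses $1$-Lipschitzness of $T$ and works verbatim for curves in $\Xbt$) to extend competitors from $\Xbtp$ to $\Xbt$, gives both inequalities.

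I do not anticipate a serious obstacle here; the main thing to get right is the bookkeeping of the constraint sets under $T$ and the verification that restricting/extending a competitor genuinely preserves admissibility (in particular that the extended function lies in the correct Newtonian space, which follows since $\int |\tilde u|^p\,d\mu$ and $\int \tilde g^p\,d\mu$ are finite multiples of the corresponding integrals over $\Xbtp$). The one place that needs a word of justification is why it suffices to test the $\Xbt$- and $\Xbtp$-capacities with $T$-invariant competitors only, which is precisely the content of Remark~\ref{rmk-T}, proved exactly as Lemma~\ref{lem-T}: pick the cheapest hyperquadrant and symmetrize. With those in hand the corollary is immediate by combining the two displayed identities.
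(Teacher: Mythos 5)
Your proposal is correct and follows essentially the same route as the paper: reduce to $T$-invariant competitors via Lemma~\ref{lem-T} and Remark~\ref{rmk-T}, use Lemma~\ref{lem-gradient} to pass upper gradients between $\R^n$, $\Xbtp$ and $\Xbt$, and conclude from the identity $\int_{B_r} g^p\,d\mu = 2^n\int_{B_r\cap\Xbtp} g^p\,d\mu = 2^{n-1}\int_{B_r\cap\Xbt} g^p\,d\mu$ for $T$-invariant $g$. Your write-up merely spells out the admissibility bookkeeping that the paper leaves implicit.
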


Similar identities hold also if
$\{0\}$ is replaced by $B_\rho$ or 
$\itoverline{B}_\rho$, where $0<\rho<r$.

\begin{proof} 
By Lemma~\ref{lem-T} and Remark~\ref{rmk-T} it is enough
to test all three capacities with $T$-invariant functions $u$
and $T$-invariant upper gradients $g$.
Lemma~\ref{lem-gradient} shows that
$T$-invariant upper gradients of a $T$-invariant  function
are the same with respect to $\R^n$ and $\Xbtp$,
and thus also with respect to $\Xbt$ 
(after obvious use of restrictions and extensions).
Hence the conclusions follow from the identities 
\[
 \int_{B_r} g^p \, d\mu 
  = 2^n \int_{B_r \cap \Xbtp} g^p \, d\mu 
  = 2^{n-1} \int_{B_r \cap\Xbt} g^p \, d\mu 
\quad \text{if $g$ is $T$-invariant}.
\qedhere
\]
\end{proof}

\begin{prop}\label{prop1}
Assume that $\mu$ is $T$-invariant on $\R^n$.
If  $\mu$ supports a $(q,p)$-Poincar\'e inequality  on $\R^n$,
then it also   supports a $(q,p)$-Poincar\'e inequality on $\Xbtp$
with the same dilation constant $\la$. 
\end{prop}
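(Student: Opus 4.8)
The plan is to deduce the Poincaré inequality on $\Xbtp$ from the one on $\R^n$ by pulling back test functions via the folding map $T$. The key point is that every ball in $\Xbtp$ is, up to comparable radii and measure, the image under $T$ of a ball in $\R^n$, and that $T$-invariant functions on $\R^n$ descend to functions on $\Xbtp$ with upper gradients that transform nicely.

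First I would fix a ball $\Bplus=B(x,r)\cap\Xbtp$ with $x\in\Xbtp$, together with an integrable $u$ on $\Xbtp$ and an upper gradient $g$ of $u$. I would extend $u$ and $g$ to $\R^n$ by setting $\uh=u\circ T$ and $\gh=g\circ T$; by Lemma~\ref{lem-gradient}, $\gh$ is an upper gradient of $\uh$ in $\R^n$, and both are $T$-invariant. Note $T^{-1}(\Bplus)$ contains the Euclidean ball $B(x,r)$ (since $T$ is $1$-Lipschitz and $T(x)=x$ for $x\in\Xbtp$), and is contained in $B(0,d(0,x)+r)\subseteq B(x, 2r + 2d(0,x))$; more usefully, $T^{-1}(\Bplus)\subseteq\bigcup_{j}X_j\cap B(0,r+|x|)$, a union of $2^n$ folded copies. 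The clean way, paralleling the proof of Lemma~\ref{lem-T}, is to work with the ball $\Bt:=B(x,r)$ in $\R^n$: since $w$ is $T$-invariant and $T$ maps each coordinate hyperquadrant isometrically onto $\Xbtp$, one has $\mu(\Bplus)\simeq\mu(\Bt)$ whenever $\Bt$ is "balanced" enough, but in general $T(\Bt)\subseteq\Bplus$ and $\mu(T(\Bt))=\mu(\Bt)$ (as $T$ is measure-preserving on each hyperquadrant and injective there). The better route avoids balance issues: apply the $(q,p)$-Poincaré inequality on $\R^n$ directly to $\uh$ and $\gh$ on the ball $\Bt=B(x,r)$.

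The main steps then are: (1) Since $\uh$ is $T$-invariant and agrees with $u$ on $\Xbtp\supseteq\Bplus$, and since $\mu\llcorner\Bplus$ equals the pushforward under $T$ of $\mu\llcorner(\Bt\cap X_k)$ summed appropriately, express $\vint_{\Bplus}|u-c|^q\,d\mu$ in terms of $\uh$ over $\Bt$ for any constant $c$; taking $c=\uh_{\Bt}$ and using that $\uh$ is $T$-invariant so $\uh_{\Bt}$ can be compared to $u_{\Bplus}$, one gets $\vint_{\Bplus}|u-u_{\Bplus}|^q\,d\mu\simle\vint_{\Bt}|\uh-\uh_{\Bt}|^q\,d\mu$ (using the doubling property of $\mu$, which holds since $\mu$ supports a Poincaré inequality and — for $q>p$ — by Alvarado--Haj\l asz, and for $q\le p$ is automatic on $\R^n$ under the Poincaré inequality). (2) Apply the $(q,p)$-Poincaré inequality on $\R^n$ to get $\vint_{\Bt}|\uh-\uh_{\Bt}|^q\,d\mu\simle r^q(\vint_{\la\Bt}\gh^p\,d\mu)^{q/p}$. (3) Finally bound $\vint_{\la\Bt}\gh^p\,d\mu$ by $\vint_{\la\Bplus}g^p\,d\mu$ (up to a constant), again using $T$-invariance of $g$ and of $\mu$: the integral of $\gh^p$ over $\la\Bt$ is controlled by $2^n$ times the integral of $g^p$ over $T(\la\Bt)\subseteq\la\Bplus$, divided by comparable measures. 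Chaining these gives the $(q,p)$-Poincaré inequality on $\Xbtp$ with the same dilation $\la$.

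The main obstacle I expect is step~(1): carefully relating the averages $u_{\Bplus}$ and $\uh_{\Bt}$ and the measures $\mu(\Bplus)$, $\mu(\Bt)$ when the ball $\Bt=B(x,r)$ straddles several hyperquadrants unequally — e.g.\ when $x$ lies on a coordinate subspace so that $\Bt$ meets, say, $2^{n-1}$ hyperquadrants. One must check that $T(\Bt\cap X_j)$ covers $\Bplus$ for at least one $j$ (indeed $T(\Bt)=\Bplus$ when $x\in\Xbtp$, since for $y\in\Bplus$ the point $z$ obtained by reflecting coordinates of $y$ into the hyperquadrant of... — more simply, $\Bt\supseteq T(\Bt)\supseteq$? one checks $T(B(x,r))=B(x,r)\cap\Xbtp$ directly from $1$-Lipschitzness of $T$ and $T|_{\Xbtp}=\mathrm{id}$: "$\subseteq$" is $1$-Lipschitzness, "$\supseteq$" because any $y\in\Bplus$ is its own $T$-preimage), so $\mu(\Bt)\ge\mu(\Bplus)$ trivially, and the reverse comparison $\mu(\Bt)\simle\mu(\Bplus)$ needs a doubling-type argument folding the other hyperquadrant pieces of $\Bt$ onto $\Xbtp$. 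Once that measure comparison and the average comparison are in hand, the rest is routine. An alternative, cleaner organization mirrors the proof of Theorem~\ref{thm-general-PI}'s "$\Bplus$" cases and of Lemma~\ref{lem-T}, reducing everything to the identity $\int_{\Bt}h\circ T\,d\mu = 2^n\int_{\Bt\cap\Xbtp}h\circ T\,d\mu$ for $T$-invariant integrands when $\Bt$ is centred appropriately; handling off-centre balls $\Bt=B(x,r)$ with $x\in\Xbtp$ is then the only extra bookkeeping.
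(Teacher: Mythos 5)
Your proposal follows essentially the same route as the paper's proof: pull $u$ and $g$ back to $\R^n$ via $T$ (Lemma~\ref{lem-gradient}), apply the $(q,p)$-Poincar\'e inequality on the Euclidean ball $B=B(x,r)$, and fold back using the two-sided inequality $\int_{\Bplus} f\,d\mu \le \int_{B} (f\circ T)\,d\mu \le 2^n \int_{\Bplus} f\,d\mu$, which in particular gives $\mu(\Bplus)\simeq\mu(B)$ and settles the "balance" and average-comparison issues you worry about in step~(1). The one caveat is that your step~(1) appeals to doubling of $\mu$ (via Alvarado--Haj\l asz, and an unjustified claim that the Poincar\'e inequality forces doubling when $q\le p$), which is neither assumed in the proposition (the paper explicitly stresses this) nor needed: the folding inequality alone yields both the measure comparison and the comparison of $u_{\Bplus}$ with $\uh_B$, so that appeal should be dropped.
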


Note that we do not assume that $\mu$ is doubling in this result.
Thus, we cannot rely on 
Theorem~1 in Haj\l asz--Koskela~\cite{HaKo-CR}
(or \cite[Proposition~4.48]{BBbook})
to know that one may assume that $\la=1$.

Example~\ref{ex-not-Tinv} below shows that the converse implication
is not true, not even if we assume that $\mu$ is doubling.
However, in  Theorem~\ref{thm-PI-Xplus-to-Rn} below we show that
the converse implication is true for radial weights $w$ such that
$d\mu=w\,dx$ is doubling.

\begin{proof}
Let $\Bplus = B \cap \Xbtp$ 
be an arbitrary ball in $\Xbtp$, where $B=B(x,r)$ and $x \in \Xbtp$.  
For any integrable function $f$ on $\Xbtp$ we see that
\begin{equation} \label{eq-2n}
    \int_{\Bplus} f \,d\mu 
    \le \int_{B} (f \circ T) \,d\mu 
    \le 2^n \int_{\Bplus} f  \,d\mu.
\end{equation}
In particular
\begin{equation} \label{eq-Bplus-B}
  \mu(\Bplus) \simeq \mu(B).
\end{equation}

Let $u$ be an integrable function on $\Xbtp$ with 
an upper gradient $g$. 
Then $\gh = g \circ T$
is an upper gradient of $\uh = u \circ T$  by Lemma~\ref{lem-gradient}.
Using the triangle inequality, 
\eqref{eq-Bplus-B} and \eqref{eq-2n}, we get that
\begin{align*}
\biggl(\vint_{\Bplus} |u-u_{\Bplus}|^q \, d\mu\biggr)^{1/q}
  &\le 2 \biggl(\vint_{\Bplus} |u-\uh_{B}|^q \, d\mu\biggr)^{1/q}
  \simeq  \biggl(\vint_{B} |\uh-\uh_{B}|^q \, d\mu\biggr)^{1/q} 
\\
  & \simle r \biggl(\vint_{\la B} \gh^p \, d\mu \biggr)^{1/p}
 \le 2^n r \biggl(\vint_{\la \Bplus} g^p \, d\mu \biggr)^{1/p}.
\qedhere
\end{align*}
\end{proof}

\begin{lem} \label{lem-T-doubling}
Assume that $\mu$ is $T$-invariant on $\R^n$.
Then $\mu$ is doubling on $\R^n$ if and only if it
is doubling on $\Xbtp$, which in turn happens if and only if it is
doubling on $\Xbt$.
\end{lem}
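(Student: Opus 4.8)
The plan is to prove the two equivalences by relating balls centred at arbitrary points to balls centred at the origin, exploiting the fact that $T$ is $1$-Lipschitz and measure-comparing (as in~\eqref{eq-2n}). The key geometric observation is that for any ball $B=B(x,r)$ in $\R^n$, the image $T(B)\subset\Xbtp$ is contained in a ball $B(Tx,r)\cap\Xbtp$ and contains $B(Tx,\varepsilon r)\cap\Xbtp$ for a dimensional constant $\varepsilon>0$ (indeed $T$ is $1$-Lipschitz, and locally near $Tx$ it is bi-Lipschitz onto its image when restricted to the hyperquadrant containing $x$); combined with the $T$-invariance of $\mu$ this gives $\mu(B)\simeq\mu(B(Tx,r)\cap\Xbtp)$, which is the crucial link. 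First I would record this together with the identity $\int_{B\cap\Xbtp}f\,d\mu\le\int_B (f\circ T)\,d\mu\le 2^n\int_{B\cap\Xbtp}f\,d\mu$ from~\eqref{eq-2n}, specialised to $f=\chi_{E}$, to get $\mu(B\cap\Xbtp)\simeq\mu(B)$ for balls centred at points of $\Xbtp$, and more generally $\mu(T^{-1}(F))\simeq\mu(F)$ for $F\subset\Xbtp$.

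For the direction ``$\mu$ doubling on $\R^n$ $\Rightarrow$ doubling on $\Xbtp$'', I would take a ball $\Bplus=B(x,r)\cap\Xbtp$ with $x\in\Xbtp$, write $\mu(2\Bplus)\simeq\mu(B(x,2r))\le C\mu(B(x,r/2\cdot 4))$ by iterating the doubling property on $\R^n$, and then note $\mu(B(x,r))\simeq\mu(\Bplus)$; a little care is needed because $B(x,2r)\cap\Xbtp$ need not equal $T$ applied to anything, but one only needs the containment $B(x,2r)\cap\Xbtp\subset T(B(x,2r))$ and $T(B(x,2r))$ has measure $\simeq\mu(B(x,2r))$. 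Conversely, if $\mu$ is doubling on $\Xbtp$, I would take an arbitrary ball $B=B(x,r)$ in $\R^n$ and push it through $T$: $\mu(2B)\le 2^n\mu(T(2B))\le 2^n\mu(B(Tx,2r)\cap\Xbtp)$, apply doubling on $\Xbtp$ to pass to $B(Tx,\varepsilon r/2)\cap\Xbtp$ (losing only a dimensional constant in the number of doubling steps), and then bound this below by $\mu(B)$ using $B(Tx,\varepsilon r/2)\cap\Xbtp\supset T(B(x,\varepsilon r/4))$, say, together with $\mu(T(B(x,\varepsilon r/4)))\simeq\mu(B(x,\varepsilon r/4))\simeq\mu(B)$ by one more ordinary doubling step — wait, that last step itself uses doubling on $\R^n$, which is what we are proving, so instead I would argue directly: $\mu(B(x,\varepsilon r/4))\le\mu(B)$ trivially since $\varepsilon\le 1$, and that inequality is all one needs for the final chain $\mu(2B)\lesssim\mu(B)$.

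For the second equivalence, doubling on $\Xbtp$ versus doubling on $\Xbt=\Xbtp\cup\Xbtm$, I would apply Proposition~\ref{prop-gen-doubl} with $X=\Xbt$, $x_0=0$: condition~\ref{b-b} there, $\mu(\Brplus)\simeq\mu(\Brminus)$, is immediate from $T$-invariance (the reflection $x\mapsto-x$ maps $\Xbtm$ onto $\Xbtp$ and is $w$-measure-preserving because $w\circ T=w$ forces $w(-x)=w(x)$), and condition~\eqref{eq-cond-d} holds with $\Lambda$ a dimensional constant — for $x_+\in\Xbtp$ and $x_-\in\Xbtm$ one has $|x_+|+|x_-|\le\sqrt n\,|x_+-x_-|$ since all coordinates of $x_+-x_-$ have the form $(x_+)_j-(x_-)_j$ with the two terms of the same sign pattern after reflection. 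So Proposition~\ref{prop-gen-doubl} directly gives doubling on $\Xbt$ $\iff$ doubling on $\Xbtp$ and on $\Xbtm$, and doubling on $\Xbtm$ is equivalent to doubling on $\Xbtp$ by the reflection symmetry, completing the proof.

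I expect the main obstacle to be the bookkeeping in the bi-Lipschitz/measure-comparison step $\mu(B(x,r))\simeq\mu(B(Tx,r)\cap\Xbtp)$ and the careful tracking of which direction already has doubling available — one must avoid circularity when converting between origin-centred and arbitrarily-centred balls. Once that lemma-level fact is cleanly stated (it is essentially~\eqref{eq-Bplus-B} combined with the observation that $T$ maps balls to sets sandwiched between two hyperquadrant-balls), both equivalences are short.
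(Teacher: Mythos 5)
Your argument is correct. For the equivalence between $\R^n$ and $\Xbtp$ it is essentially the paper's proof: your ``crucial link'' $\mu(B(x,r))\simeq\mu(B(Tx,r)\cap\Xbtp)$ is exactly the combination of the identity $\mu(B(x,r))=\mu(B(Tx,r))$ (from the reflection-invariance of $w$) with \eqref{eq-Bplus-B}, which is what the paper invokes; and your final observation that only the trivial monotonicity $\mu(B(x,s))\le\mu(B(x,r))$ for $s\le r$ is needed to close the chain is precisely how one avoids the circularity you flag. (Two cosmetic points: $T$ restricted to the closed hyperquadrant containing $x$ is a global isometry onto $\Xbtp$, so your $\varepsilon$ can be taken to be $1$ and $T(B)$ is in fact equal to $B(Tx,r)\cap\Xbtp$; and the phrase ``bound this below by $\mu(B)$'' should read ``above'' --- your subsequent correction shows you mean the right inequality.) Where you genuinely diverge from the paper is the equivalence between $\Xbtp$ and $\Xbt$: you route it through Proposition~\ref{prop-gen-doubl}, verifying connectedness, the condition \eqref{eq-cond-d} (which indeed holds with $\La=\sqrt2$, since opposite hyperquadrants give $|\xplus-\xminus|^2\ge|\xplus|^2+|\xminus|^2$), and the balance condition via the symmetry $x\mapsto-x$. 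This works, but it is heavier than necessary: the same one-line symmetry comparison $\mu(B(x,r)\cap\Xbt)\simeq\mu(B(x,r))$ for $x\in\Xbt$ (a ball in $\Xbt$ contains a hyperquadrant-ball of the same radius and is contained in the full Euclidean ball of comparable measure) transfers doubling between $\R^n$ and $\Xbt$ exactly as for $\Xbtp$, which is the paper's route. The gain of your detour is that it needs no new estimate beyond what Proposition~\ref{prop-gen-doubl} already provides; the cost is invoking a substantially stronger result for a fact that follows from the measure identity alone.
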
  

\begin{proof}
  As $\mu(B(x,r))=\mu(B(T(x),r))$ this follows directly
  from \eqref{eq-Bplus-B} deduced in the proof
  of Proposition~\ref{prop1}.
\end{proof}

\begin{prop}\label{prop-w=woT}
Assume that $\mu$ is a doubling $T$-invariant  measure on $\R^n$.
Then
$\mu$
supports a $(q,p)$-Poincar\'e inequality
on $\Xbt$
if and only if
it
supports a $(q,p)$-Poincar\'e inequality
on $\Xbtp$
and  
\begin{equation}   \label{eq-cap-cond-Rn-X-r-p}
\cpXbtp( \{0\}, B_r)  \simeq r^{-p}\mu(B_r)
\quad \text{for all } r>0.
\end{equation}
\end{prop}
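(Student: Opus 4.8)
\textbf{Proof plan for Proposition~\ref{prop-w=woT}.}
The plan is to apply the general bow-tie Theorem~\ref{thm-general-PI} to the space $X=\Xbt$ with $\Xplus=\Xbtp$ and $\Xminus=\Xbtm$, and to check that in the present $T$-invariant setting its three conditions~\ref{c-a}--\ref{c-c-both} collapse to exactly the two hypotheses appearing in the statement. First I would record that $\mu$ is doubling on $\Xbt$: this is immediate from Lemma~\ref{lem-T-doubling}, since $\mu$ is assumed doubling and $T$-invariant on $\R^n$. Then I would observe that the two hyperquadrants are isometric via the reflection $x\mapsto -x$, and that this reflection preserves $\mu$ because $w$ is $T$-invariant (indeed $w(-x)=w(T(-x))=w(T(x))=w(x)$). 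Consequently a $(q,p)$-Poincar\'e inequality holds on $\Xbtp$ if and only if it holds on $\Xbtm$ (with the same constants), and likewise the two capacity conditions in~\eqref{eq-cap-cond-r-p-mu} are equivalent to each other and to~\eqref{eq-cap-cond-Rn-X-r-p}; here one also uses that $\cpXbtp(\{0\},B_r)$ and $\cpXbtm(\{0\},B_r)$ agree under the reflection, and that $\mu(\Brplus)=\mu(\Brminus)$. Thus condition~\ref{c-a} of Theorem~\ref{thm-general-PI} reduces to ``$(q,p)$-Poincar\'e on $\Xbtp$'' and condition~\ref{c-c-both} reduces to~\eqref{eq-cap-cond-Rn-X-r-p} (noting $\diam\Xbtp=\diam\Xbtm=\infty$, so the restriction $r<\tfrac14\min\{\diam\Xplus,\diam\Xminus\}$ is vacuous and we get all $r>0$).

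It remains to handle condition~\ref{c-b}, the quasiconvexity-type inequality $d(\xplus,0)+d(0,\xminus)\le\Lambda\,d(\xplus,\xminus)$ for $\xpm\in\Xbtpm$. The key point is that this holds automatically here with $\Lambda=1$: for $\xplus\in\Xbtp$ and $\xminus\in\Xbtm$ every coordinate of $\xplus$ is $\ge 0$ and every coordinate of $\xminus$ is $\le 0$, so $|\xplus-\xminus|^2=\sum_j(x_j^{+}-x_j^{-})^2\ge\sum_j\bigl((x_j^{+})^2+(x_j^{-})^2\bigr)=|\xplus|^2+|\xminus|^2\ge(|\xplus|+|\xminus|)^2/2$; in fact the sharper $|\xplus-\xminus|\ge\max\{|\xplus|,|\xminus|\}$ and a short computation gives $|\xplus|+|\xminus|\le\sqrt2\,|\xplus-\xminus|$, so $\Lambda=\sqrt2$ works (the precise constant is irrelevant). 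Hence~\ref{c-b} is satisfied unconditionally.

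Putting these reductions together: by Theorem~\ref{thm-general-PI}, $\mu$ supports a $(q,p)$-Poincar\'e inequality on $\Xbt$ if and only if \ref{c-a}, \ref{c-b}, \ref{c-c-both} all hold; \ref{c-b} always holds; \ref{c-a} is equivalent to the $(q,p)$-Poincar\'e inequality on $\Xbtp$ alone; and \ref{c-c-both} is equivalent to~\eqref{eq-cap-cond-Rn-X-r-p}. This is precisely the claimed equivalence. I would also remark that the hypothesis that $\mu$ be doubling on $\Xbt$ (needed to invoke Theorem~\ref{thm-general-PI}) is supplied for free by Lemma~\ref{lem-T-doubling}, so no extra assumption beyond ``$\mu$ doubling and $T$-invariant on $\R^n$'' is required.

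I do not expect a genuine obstacle here: the proposition is essentially a dictionary translation of Theorem~\ref{thm-general-PI} into the symmetric $T$-invariant situation. The only point deserving care is the bookkeeping around the two hyperquadrants having infinite diameter, so that ``for all $0<r<\tfrac14\min\{\diam\Xplus,\diam\Xminus\}$'' in~\ref{c-c-both} becomes ``for all $r>0$'' as written in~\eqref{eq-cap-cond-Rn-X-r-p}, together with justifying the symmetry reduction $\Xbtp\leftrightarrow\Xbtm$ carefully enough that conditions stated for one hyperquadrant transfer to the other; both are routine given the isometry $x\mapsto-x$ and $T$-invariance of $w$.
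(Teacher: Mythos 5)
Your proposal is correct and follows the same route as the paper, whose entire proof of this proposition is the single line ``This is a direct consequence of Theorem~\ref{thm-general-PI}''; you have simply filled in the routine verifications (doubling via Lemma~\ref{lem-T-doubling}, condition~\ref{c-b} with $\Lambda=\sqrt2$ from the sign pattern of the hyperquadrants, and the symmetry $x\mapsto-x$ collapsing \ref{c-a} and \ref{c-c-both} to the two stated hypotheses). The only point you leave implicit is that $\mu(\Brplus)\simeq\mu(B_r)$ by $T$-invariance, which reconciles the right-hand sides of \eqref{eq-cap-cond-r-p-mu} and \eqref{eq-cap-cond-Rn-X-r-p}, but this is immediate.
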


\begin{proof}
This is a direct consequence of Theorem~\ref{thm-general-PI}.
\end{proof}

By \cite[Proposition~4.48]{BBbook} the dilation for the
$(q,p)$-Poincar\'e inequality
  on $\Xbt$  can be chosen to satisfy
$\la \le \sqrt{2}$,
  while on $\Xbtp$ it can be required to be $1$.
Lemma~\ref{lem-T-doubling} shows
that the doubling condition for $\mu$ holds equivalently on
$\R^n$, $\Xbtp$ and $\Xbt$.
Similarly, by Corollary~\ref{cor-anncap}, the capacity
condition~\eqref{eq-cap-cond-Rn-X-r-p} holds simultaneously for
$\cpXbtp$, $\cpXbt$ and $\cpRn$.

Next, we 
give an example showing that the converse implication
  in Proposition~\ref{prop1}  can fail
for doubling measures given by $T$-invariant weights.

\begin{example} \label{ex-not-Tinv}
Let $1 \le p <\infty$ and $n \ge 2$.
We construct a $T$-invariant weight on $\R^n$ which 
is \p-admissible on $\Xbtp$ but not on $\R^n$.
In particular, it shows that
Theorem~\ref{thm-PI-Xplus-to-Rn} below does not hold for $T$-invariant weights,
and it is essential that we only consider radial weights therein.

Let $d\mu=w\,dx$, where $w(x_1,x_2,\ldots,x_n)=|x_1|^\alp$ with $\alp > p-1$,
and let $d\nu(x)=|x|^\al\,dx$
be a measure on $Y=[0,\infty)$.

Then $\mu$ is doubling on $\R^n$ and $\nu$ on $Y$.
By Chua--Wheeden~\cite[Theorem~1.4]{ChuaWheeden},
$\nu$ supports a $1$-Poincar\'e inequality for each
interval $(a,b)\subset Y$ with $\la=1$ and the optimal constant
\begin{align*}
     C
 &=\frac{4}{(b-a)\nu(a,b)} 
     \biggl\|\frac{\nu((a,x))\nu((x,b))}{x^\al} \biggr\|_{L^\infty(a,b)} 
    \le \frac{4}{(b-a)} 
     \biggl\|\frac{\nu((a,x))}{x^\al} \biggr\|_{L^\infty(a,b)} \\\
 & = \frac{4}{(b-a)} 
     \biggl\|\frac{x^{1+\alp}-a^{1+\alp}}{(1+\alp)x^\al} \biggr\|_{L^\infty(a,b)} 
  = \frac{4}{(b-a)} 
     \biggl\|\frac{(x-a) \xi_x^\alp}{x^\alp} \biggr\|_{L^\infty(a,b)} 
  \le  4,
\end{align*}
where $\xi_x \in(a,x)$ comes from the mean-value theorem.
As this holds for all intervals $(a,b)\subset Y$, 
$\nu$ supports a $1$-Poincar\'e inequality on $Y$
with constant $4$.
It then follows from Bj\"orn--Bj\"orn~\cite[Theorem~3 and Remark~4]{BBtensor},
that $\mu$ supports a $1$-Poincar\'e inequality on $\Xbtp=Y^n$.

To show that $\mu$ is not \p-admissible on $\R^n$, let
$K=\{0\} \times [0,1]^{n-1}$ and $u_t(x)=(1-t\dist(x,K))_\limplus$, $t \ge 1$.
It then follows that the Sobolev capacity (see \cite{BBbook} or \cite{HeKiMa})
\[
    C_{p,\mu}^{\R^n}(K) \le \|u_t\|_{\Np(\R^n,\mu)}^p 
      \le \mu(\supp u_t) + \int_{\supp u_t} t^p \, d\mu
    \simle  t^{-\al-1} + t^{p-\al-1},
\]
which tends to $0$ as $t \to \infty$.
As sets of capacity zero cannot separate open sets in spaces supporting
Poincar\'e inequalities (see \cite[Lemma~4.6]{BBbook}), $\mu$
does not support any \p-Poincar\'e inequality on $\R^n$.
\end{example}

\section{Bow-ties in \texorpdfstring{$\R^n$}{Rn} with radial weights }
\label{sect-radial}

We say that a weight $w$ is a \emph{radial weight} if
there is a function  $w: [0,\infty) \to [0,\infty]$ such that
$w(x)=w(|x|)$.
By abuse of notation we denote both functions by $w$, and treat
other radial functions similarly.
We will also say that $\mu$ is \emph{radial} if $d\mu =w\,dx$,
where $w$ is radial.

\begin{lem}\label{lem-radial} 
Assume that $\mu$ is radial on $\R^n$.
If $r>0$, then
\[
\cpRn(\{0\},B_r)=\inf_u \int_{B_r} g^p \, d\mu, 
\]
where the infimum is taken over all radial
functions $u$ such that $u(0) = 1$ 
and $u=0$ on $\R^n \setm  B_r$, 
and all radial upper gradients $g$ of $u$. 
\end{lem}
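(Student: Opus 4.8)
The plan is to argue exactly as in the proof of Lemma~\ref{lem-T}: it suffices to show that from any function $u$ admissible for testing $\cpRn(\{0\},B_r)$ in the sense of Definition~\ref{deff-varcap}, with upper gradient $g$, one can produce a \emph{radial} admissible function $\bar u$ with a radial upper gradient $\bar g$ such that $\int_{B_r}\bar g^p\,d\mu\le\int_{B_r}g^p\,d\mu$; since radial admissible pairs form a subclass of all admissible pairs, the reverse inequality in the asserted identity is automatic, so this is enough. Where the proof of Lemma~\ref{lem-T} picks the best coordinate hyperquadrant and reflects, here I would instead average over the orthogonal group $O(n)$, which acts on $\R^n$ by isometries that fix $0$ and preserve both $B_r$ and the radial measure $\mu$; the finite sum of Lemma~\ref{lem-T} is replaced by integration against normalized Haar measure, and the pigeonhole step by Jensen's inequality (legitimate since $p\ge1$). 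For $n=1$ it is essential to use $O(1)=\{\pm1\}$ rather than $SO(1)$, so that radial (i.e.\ even) symmetry is genuinely achieved.

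Concretely, I would first note that for each $O\in O(n)$ the function $u\circ O$ is again admissible: $(u\circ O)(0)=u(0)=1$, it vanishes on $\R^n\setm B_r$ (since $O^{-1}B_r=B_r$), and $g\circ O$ is a Borel upper gradient of it — the upper gradient inequality for $u\circ O$ and $g\circ O$ follows from the one for $u$ and $g$ applied along the curve $O\circ\gamma$, using that $O$ preserves arc length, just as in Lemma~\ref{lem-gradient}. Since $w(Ox)=w(|Ox|)=w(x)$ and $|\det O|=1$, a change of variables gives $\int_{B_r}(g\circ O)^p\,d\mu=\int_{B_r}g^p\,d\mu$. Then I set
\[
\bar u(x)=\int_{O(n)}u(Ox)\,d\theta(O),\qquad \bar g(x)=\int_{O(n)}g(Ox)\,d\theta(O),
\]
with $\theta$ the normalized Haar measure on $O(n)$. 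Right-invariance of $\theta$ makes $\bar u$ and $\bar g$ invariant under $O(n)$, hence constant on spheres about $0$, i.e.\ radial; also $\bar u(0)=1$ and $\bar u\equiv 0$ outside $B_r$. That $\bar g$ is an upper gradient of $\bar u$ then follows from Tonelli's theorem: for every curve $\gamma$,
\[
\begin{aligned}
|\bar u(\gamma(0))-\bar u(\gamma(l_\gamma))|
&\le\int_{O(n)}|u(O\gamma(0))-u(O\gamma(l_\gamma))|\,d\theta(O)\\
&\le\int_{O(n)}\int_\gamma(g\circ O)\,ds\,d\theta(O)
=\int_\gamma\bar g\,ds.
\end{aligned}
\]
Finally, Jensen's inequality gives $\bar g(x)^p\le\int_{O(n)}g(Ox)^p\,d\theta(O)$, so by the invariance above $\int_{B_r}\bar g^p\,d\mu\le\int_{B_r}g^p\,d\mu$; in particular $\bar g\in L^p$, and Minkowski's integral inequality gives $\bar u\in L^p(\R^n)$, so $\bar u\in\Np(\R^n)$ and $(\bar u,\bar g)$ is the desired radial competitor.

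The step I expect to require the most care is the bookkeeping around $\bar g$: one must check that it is a genuine Borel function (so that it qualifies as an upper gradient in the sense of Definition~\ref{deff-ug}), that it is an upper gradient of $\bar u$, and that passing to the average does not increase the $L^p$ energy. All three follow from Tonelli's theorem on $\R^n\times O(n)$ — the input being the joint Borel measurability of $(x,O)\mapsto u(Ox)$ and $(x,O)\mapsto g(Ox)$, which holds since $u,g$ are Borel and the action is continuous — combined with the $O(n)$-invariance of $\mu$ and $B_r$ and the convexity of $t\mapsto t^p$. Everything else, including the variants with $B_\rho$ or $\clB_\rho$ in place of $\{0\}$ noted in Remark~\ref{rmk-T}, is routine.
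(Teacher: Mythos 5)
Your argument is correct, and it fills in a proof that the paper itself does not give: the paper simply refers to the proof of Proposition~10.8 in \cite{BBLeh1}, where the same symmetrization is carried out via spherical means. Your version --- averaging $u\circ O$ and $g\circ O$ over $O(n)$ against normalized Haar measure, checking admissibility of each $u\circ O$ exactly as in Lemma~\ref{lem-gradient}, passing the upper gradient inequality through the average by Tonelli, and controlling the energy by Jensen plus the $O(n)$-invariance of $\mu$ and $B_r$ --- is the same idea recast as a group average (for fixed $|x|$ the Haar average of $u(Ox)$ \emph{is} the spherical mean), and it buys you a clean, self-contained treatment that visibly covers $p=1$ and $n=1$, the two cases the paper is at pains to point out are included. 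One small point worth making explicit: since functions in $\Np(\R^n)$ are defined everywhere and may take the values $\pm\infty$ on sets of capacity zero, you should first truncate and assume $0\le u\le 1$ (truncation preserves admissibility and upper gradients, and does not increase the energy); this guarantees that $\bar u(x)=\int_{O(n)}u(Ox)\,d\theta(O)$ is well defined and finite at \emph{every} point, which your pointwise upper gradient inequality for $\bar u$ requires. With that remark added, the proof is complete, and the extensions to $B_\rho$, $\clB_\rho$, $\Xbt$ and $\Xbtpm$ noted after the lemma go through verbatim since all these sets are $O(n)$-invariant or can be handled with the reflection group as in Lemma~\ref{lem-T}.
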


This was shown in the proof of
Proposition~10.8 in
Bj\"orn--Bj\"orn--Lehrb\"ack~\cite{BBLeh1}.
(The argument is valid also for $p=1$,
as well as for $n=1$.)
The corresponding result is proved in the same way for
the bow-tie $\Xbt$, as well as for $\Xbtp$. 
Similar identities hold also if $\{0\}$ is replaced by
$B_\rho$ or $\clB_\rho$, where $0<\rho<r$.

For doubling radial weights, Proposition~\ref{prop1} admits a converse:

\begin{thm}   \label{thm-PI-Xplus-to-Rn}
Assume that 
$\mu$ is a doubling radial measure on $\R^n$.
Then
the following are equivalent\/\textup{:}
\begin{enumerate}
\item \label{d-Xp}
$\mu$ supports a $(q,p)$-Poincar\'e  inequality on $\Xbtp$\/\textup{;}
\item \label{d-Xp-1}
  $\mu$ supports a $(q,p)$-Poincar\'e  inequality on $\Xbtp$ 
  with dilation $\la=1$\/\textup{;}
\item \label{d-Rn}
$\mu$ supports a $(q,p)$-Poincar\'e  inequality on $\R^n$\/\textup{;}
\item \label{d-Rn-1}
$\mu$ supports a $(q,p)$-Poincar\'e  inequality on $\R^n$
  with dilation $\la=1$.
\end{enumerate}
\end{thm}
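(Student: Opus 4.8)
The four statements will be shown equivalent as follows. The implications \ref{d-Xp-1}$\,\imp\,$\ref{d-Xp} and \ref{d-Rn-1}$\,\imp\,$\ref{d-Rn} are trivial, since a smaller dilation only strengthens the inequality. Since $\mu$ is doubling on $\R^n$ and, by Lemma~\ref{lem-T-doubling}, also on $\Xbtp$, and since $\R^n$ and $\Xbtp$ are geodesic (being convex subsets of $\R^n$), the improvement of the dilation constant due to Haj\l asz--Koskela (\cite[Proposition~4.48]{BBbook}) gives \ref{d-Rn}$\,\imp\,$\ref{d-Rn-1} and \ref{d-Xp}$\,\imp\,$\ref{d-Xp-1}. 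As a radial weight is $T$-invariant, Proposition~\ref{prop1} gives \ref{d-Rn}$\,\imp\,$\ref{d-Xp} (in fact with the same dilation). It therefore remains only to prove \ref{d-Xp}$\,\imp\,$\ref{d-Rn}, and by what has been shown we may assume the $(q,p)$-Poincar\'e inequality on $\Xbtp$ holds with $\la=1$.

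For this, write $\R^n=\bigcup_{j=1}^{2^n}X_j$, where $X_1,\dots,X_{2^n}$ are the closed coordinate hyperquadrants and $X_1=\Xbtp$. Each $X_j$ is the image of $\Xbtp$ under a sign-change $T_j(y)=(\pm y_1,\dots,\pm y_n)$, which is an isometry of $\R^n$ preserving the radial measure $\mu$; hence $\mu$ supports a $(q,p)$-Poincar\'e inequality with $\la=1$ on each $X_j$. Moreover, \eqref{eq-Bplus-B} together with the $T_j$-invariance of $\mu$ gives $\mu(B\cap X_j)\simeq\mu(B)$ and $\mu(\la B\cap X_j)\simeq\mu(\la B)$ for every ball $B=B(x,r)$ in $\R^n$ meeting $X_j$, which lets us pass freely between averages over $B$ and over the pieces $B\cap X_j$, just as in the proof of Theorem~\ref{thm-general-PI}. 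The plan is to build $\R^n$ from $\Xbtp$ in $n$ reflection steps, passing through the intermediate sectors $S_k=\R^k\times[0,\infty)^{n-k}$ for $0\le k\le n$, so that $S_0=\Xbtp$ and $S_n=\R^n$; at the $k$-th step $S_{k+1}=S_k\cup\sigma S_k$, where $\sigma$ reflects the $(k+1)$-st coordinate. By the same reasoning as in Lemma~\ref{lem-T-doubling}, $\mu$ is doubling on every $S_k$, and we prove by induction on $k$ that $\mu$ supports a $(q,p)$-Poincar\'e inequality on $S_k$; the base case $k=0$ is the hypothesis.

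In the inductive step we verify the inequality on $S_{k+1}=S_k\cup\sigma S_k$ for a ball $B=B(x,r)$ with a function $u$ and upper gradient $g$. If $B$ lies in $S_k$ or in $\sigma S_k$ we are done by the inductive hypothesis. If $B$ straddles the reflecting hyperplane $H=\{y_{k+1}=0\}$, there are two regimes. If $\dist(0,B)\ge r$, then $B$ is bounded away from the origin on the scale of its radius, and in ``spherical slab'' coordinates around $x$ the measure $\mu$ restricted to $B$ is comparable to a product of a one-dimensional weighted measure in the radial direction and Lebesgue measure in the $n-1$ tangential directions, with $H$ corresponding to a hyperplane transverse to the slab; the oscillation of $u$ across $H$ is then controlled by one-dimensional slicing in the (unweighted) transverse direction together with Fubini and H\"older, and combining this with the Poincar\'e inequality on the pieces $B\cap X_j$ and the measure comparisons above yields the inequality on $B$. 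If instead $\dist(0,B)<r$, then $B$ is contained in a concentric ball $B(0,\rho)$ of comparable radius inside $S_{k+1}$, which splits as $\bigl(B(0,\rho)\cap S_k\bigr)\cup\sigma\bigl(B(0,\rho)\cap S_k\bigr)$ into two isometric halves, each a ball in $S_k$ centred at the origin, for which the inequality holds by the inductive hypothesis; it then remains to glue the two halves.

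The main obstacle is precisely this gluing near the origin: the two halves overlap only in the face $F=B(0,\rho)\cap H\cap S_{k+1}$, which has $\mu$-measure zero, so Lemma~\ref{lem-semilocal} does not apply and $|u_{B(0,\rho)\cap S_k}-u_{\sigma(B(0,\rho)\cap S_k)}|$ must be estimated by a direct flux bound. This is where radiality, rather than mere $T$-invariance, is essential: a radial weight can degenerate only at the single point $0$ and not along any hyperplane, so the face $F$ is non-polar and in fact carries the sharp amount of capacity, $\capp_{p,\mu}^{S_{k+1}}(F,\la B(0,\rho)\cap S_{k+1})\simge\rho^{-p}\mu(B(0,\rho))$ --- a lower bound that one extracts from the explicit capacity formulas for radial weights in Proposition~\ref{prop-est-cap-p-ny} and Corollary~\ref{cor-anncap} by a suitable comparison of test functions --- whereupon Maz$'$ya's inequality (\cite[Theorem~6.21]{BBbook}) yields $|u_{B(0,\rho)\cap S_k}-u_{\sigma(B(0,\rho)\cap S_k)}|\simle\rho\bigl(\vint_{\la B(0,\rho)}g^p\,d\mu\bigr)^{1/p}$. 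That radiality cannot be dispensed with here is exactly the content of Example~\ref{ex-not-Tinv}, where for the $T$-invariant weight $|x_1|^\al$ with $\al>p-1$ the hyperplane $\{x_1=0\}$ is polar and the Poincar\'e inequality on $\R^n$ fails although it still holds on $\Xbtp$. Assembling the two regimes completes the inductive step, and after $n$ steps we obtain the $(q,p)$-Poincar\'e inequality on $S_n=\R^n$.
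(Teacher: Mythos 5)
Your reductions of \ref{d-Xp}$\eqv$\ref{d-Xp-1}, \ref{d-Rn}$\eqv$\ref{d-Rn-1} and \ref{d-Rn}$\imp$\ref{d-Xp} match the paper. The gap is in \ref{d-Xp}$\imp$\ref{d-Rn}. You correctly identify the main obstacle --- the halves $S_k$ and $\sigma S_k$ meet only in a face $F$ of $\mu$-measure zero, so Lemma~\ref{lem-semilocal} does not apply --- but your proposed resolution does not hold up. First, the bound $\capp_{p,\mu}^{S_{k+1}}(F,\la B(0,\rho)\cap S_{k+1})\simge\rho^{-p}\mu(B(0,\rho))$ does not follow from Proposition~\ref{prop-est-cap-p-ny} or Corollary~\ref{cor-anncap}: those results compute capacities of the point $\{0\}$ (which may well be zero) and of condensers $(B_{r'},B_r)$, and neither gives any lower bound for an $(n-1)$-dimensional set that contains no ball; establishing such a trace-type bound for a general doubling radial weight is essentially as hard as the Poincar\'e inequality you are trying to prove. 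Second, even granting the bound, Maz$'$ya's inequality controls $\vint_B|u|^q\,d\mu$ via the capacity of the set where $u$ \emph{vanishes}; to conclude $|u_{B(0,\rho)\cap S_k}-u_{\sigma(B(0,\rho)\cap S_k)}|\simle\rho\bigl(\vint g^p\,d\mu\bigr)^{1/p}$ you would need $u$ to be quasi-constant on a large-capacity subset of $F$, which is not given, so a genuine flux/trace argument across $F$ is still missing. Your treatment of balls straddling $H$ away from the origin (``one-dimensional slicing in the unweighted transverse direction'') is likewise only asserted, not proved.

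The paper avoids the measure-zero interface altogether, and this is the idea your proposal lacks: since $\mu$ is radial (rotation-invariant), \emph{every} sector congruent to $\Xbtp$ with vertex at the origin supports the $(q,p)$-Poincar\'e inequality with the same constants, not just the coordinate hyperquadrants. Between two adjacent hyperquadrants one inserts a rotated congruent sector $X''$ overlapping each of them in exactly half of its measure; Lemma~\ref{lem-semilocal} then applies twice with $\theta=\tfrac12$, and iterating aggregates all $2^n$ hyperquadrants into a ball centred at the origin. Balls not centred at the origin either fit inside a single rotated sector (when far from the origin relative to their radius) or are absorbed into a comparable origin-centred ball. To salvage your reflection scheme you would have to replace the face-capacity step by such an overlapping rotated sector, or supply an actual trace inequality across $F$; as written the inductive step does not go through.
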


The equivalences \ref{d-Xp}$\eqv$\ref{d-Xp-1}
and \ref{d-Rn}$\eqv$\ref{d-Rn-1} are well known and due to
Haj\l asz--Koskela~\cite[Theorem~1]{HaKo-CR},
but it will be convenient to have them included here.
Recall that the doubling condition for the radial measure 
$\mu$ holds equivalently on
$\R^n$, $\Xbtp$ and $\Xbt$, by Lemma~\ref{lem-T-doubling}.

Theorem~\ref{thm-PI-Xplus-to-Rn}
can directly be combined with Proposition~\ref{prop-w=woT}
to give a characterization for the $(q,p)$-Poincar\'e  inequality on $\Xbt$.

\begin{proof}
\ref{d-Xp}$\eqv$\ref{d-Xp-1}
and \ref{d-Rn}$\eqv$\ref{d-Rn-1}
As mentioned above, these equivalences follow from
Haj\l asz--Koskela~\cite[Theorem~1]{HaKo-CR}
(or \cite[Proposition~4.48]{BBbook}).

\ref{d-Rn}$\imp$\ref{d-Xp}
This follows from Proposition~\ref{prop1}.

\ref{d-Xp-1}$\imp$\ref{d-Rn}
By symmetry, any 
sector congruent to $\Xbtp$ in $\R^n$,
with vertex at the origin,
  also supports a $(q,p)$-Poincar\'e inequality
  with the same constants.
In particular, this is true for any of the $2^n$ coordinate hyperquadrants.

Let $B=B(z,r)$ be a ball in $\R^n$ and $u$ be an integrable function on
$\R^n$ with an upper gradient $g$. 
By symmetry, we can assume that $z_1=z_2=\ldots=z_n\ge0$.
If $r\le z_1$ then $B\subset \Xbtp$ and the $(q,p)$-Poincar\'e
inequality holds for $B$. 
Assume therefore that $r> z_1 = |z|/\sqrt{n}$.
In particular, 
\begin{equation}  \label{eq-inclusions-balls}
B \subset B(0,(1+\sqrt{n})r) \subset (1+2\sqrt{n})B.
\end{equation}
It therefore suffices to show that the $(q,p)$-Poincar\'e inequality holds on
every ball $B_r$ centred at the origin, cf.\ \eqref{eq-417a}--\eqref{eq-417b}.
Consider the  coordinate hyperquadrant
\[ 
X'=\{(x_1, \ldots, x_n):x_1\le0 \text{ and } x_j \geq 0,\ j=2,\ldots,n \},
\]
neighbouring to $\Xbtp$.
Moreover, let 
\[ 
X''=\{(x_1, \ldots, x_n):x_2\ge|x_1| \text{ and } x_j \geq 0,\ j=3,\ldots,n \}
\]
be a congruent sector intersecting both $\Xbtp$ and $X'$.
Note that, by the choice of $X''$ and because $w$ is radial,
\[
\mu(B_r\cap\Xbtp\cap X'') = \mu(B_r\cap X'\cap X'') 
= \tfrac12 \mu(B_r\cap X'') 
= \tfrac12 \mu(B_r\cap X')
= \tfrac12 \mu(B_r\cap \Xbtp).
\]
By assumption, 
\begin{equation*}
\biggl(\int_{B_r\cap\Xbtp} |u-u_{B_r\cap\Xbtp}|^q \, d\mu\biggr)^{1/q}
  \le  C(r) \biggl(\int_{B_r\cap\Xbtp} g^p \, d\mu \biggr)^{1/p},
\end{equation*}
where $C(r)=C r\mu(B_r\cap\Xbtp)^{1/q-1/p}$.
Similar $(q,p)$-Poincar\'e inequalities with the same $C(r)$ hold also with $\Xbtp$
replaced by $X'$ and $X''$.
Lemma~\ref{lem-semilocal}, applied first to $B_r\cap\Xbtp$ and
$B_r\cap X''$, and then to $B_r\cap(\Xbtp\cup X'')$ and $B_r\cap X'$,
now implies that 
\begin{align*}
\biggl(\int_{B_r\cap(\Xbtp\cup X')} |u-u_{B_r\cap(\Xbtp\cup X')}|^q \, d\mu\biggr)^{1/q} \\
  & \kern -5em 
\le C(r)(4(1+2^{1/q}))^2 \biggl(\int_{B_r\cap(\Xbtp\cup X')} g^p \, d\mu \biggr)^{1/p}.
\end{align*}
Continuing in this way and aggregating further coordinate
hyperquadrants
using Lemma~\ref{lem-semilocal}, we obtain 
\begin{equation*}
\biggl(\int_{B_r} |u-u_{B_r}|^q \, d\mu\biggr)^{1/q}
  \le C(r)(4(1+2^{1/q}))^{2^{n+1}} 
     \biggl(\int_{B_r} g^p \, d\mu \biggr)^{1/p}.
\end{equation*}
Since $C(r)\simeq r\mu(B_r)^{1/q-1/p}$ and in view of the 
inclusions~\eqref{eq-inclusions-balls}, this implies
a $(q,p)$-Poincar\'e inequality with a dilation constant at most
$\la=1+2\sqrt{n}$ for every ball in $\R^n$,
which concludes the proof.
\end{proof}

We are now ready to prove the explicit formula for 
the capacity 
of annuli
around the origin with respect to radial weights.

\begin{proof}[Proof of Proposition~\ref{prop-est-cap-p-ny}]
By Lemma~\ref{lem-radial}, it suffices to test the capacity by radial
functions $u(x)=u(|x|)$ and their radial upper gradients $g(x)=g(|x|)$.
Since $g$ is radial, it follows that $\int_{0}^rg\,d\rho\ge1$.

Consider first the formula~\eqref{eq-est-cap-p-ny} for $p=1$.
Using
spherical coordinates,
\[
\int_{B_r} g\,d\mu 
=  \int_{0}^r g(\rho) \what(\rho) \,d\rho
\ge  \essinf_{(0,r)} \what.
\]
Taking infimum over all
such $u$ and their radial upper gradients $g$
shows one inequality in~\eqref{eq-est-cap-p-ny} for $p=1$.
To prove the reverse
inequality, let 
\[
A_\eps := \Bigl\{ \rho\in (0,r): \what(\rho) 
            \le \essinf_{(0,r)} (\what+\eps) \Bigr\}, \quad \eps>0,
\]
and consider the function
\[
u(x) = 1 -\frac{1}{|A_\eps|} \int_{0}^{\min\{r,|x|\}} \chi_{A_\eps}(\rho)\,d\rho,
\]
where $|A_\eps|$ stands for the Lebesgue measure of $A_\eps$.
Then $u$ is Lipschitz on $\R^n$, admissible in the definition of
$\coneRn(\{0\},B_r)$ 
and has $g(x)=|A_\eps|^{-1} \chi_{A_\eps}(|x|)$ as an upper gradient.
(We may assume that $\what$ is a Borel function, and thus
  $A_\eps$ is a Borel set.)
Since $\what\le \essinf_{(0,r)}(\what+\eps)$ on $A_\eps$, it then follows that
\[
\coneRn(\{0\},B_r) 
\le \frac{1}{|A_\eps|} \int_{A_\eps} \what(\rho)\,d\rho
\le \essinf_{(0,r)} ( \what +\eps).
\]
Letting $\eps\to0$ concludes the proof of~\eqref{eq-est-cap-p-ny} for $p=1$.

Next, we turn to the formula~\eqref{eq-est-cap-p-ny} for $p>1$.
Since $w$ is locally integrable, $\what(\rho)<\infty$ for a.e.\ $\rho$.
In this case, we get by H\"older's inequality
(with appropriate interpretation when $\what(\rho)=0$),
\begin{equation*} 
  1    \le \int_0^r g \what^{1/p} \what^{-1/p}\, d\rho
   \le \biggl( \int_{B_r} g^p \,d\mu\biggr)^{1/p}
       \biggl( \int_0^r \what^{1/(1-p)} \,d\rho\biggr)^{(p-1)/p},
\end{equation*}
and thus
\[
         \biggl( \int_0^r \what^{1/(1-p)} \,d\rho\biggr)^{1-p}
    \le \int_{B_r} g^p \,d\mu.
\]
Taking infimum over all
such $u$ and their radial upper gradients $g$
shows one inequality in~\eqref{eq-est-cap-p-ny} for $p>1$.

Conversely, let 
$w_\eps(\rho)=w(\rho)+ \eps$ and $\what_\eps(\rho)=\om_{n-1}w_\eps(\rho)\rho^{n-1}$.
It then follows
that
\[
0<  \int_0^r \what_\eps^{p/(1-p)} \,d\rho < \infty.
\]
Let now $g_\eps(x)=g_\eps(|x|)=c \what_\eps(|x|)^{1/(1-p)}$, where $c$ is chosen
so that $\int_0^r g_\eps \,d\rho=1$.
Also let 
\[
   u_\eps(x)=1-\int_0^{\min\{r,|x|\}} g_\eps(t) \,dt.
\]
We may again assume that $\what$ is a Borel function.
Then $g_\eps$ is an upper gradient of $u_\eps$ in $\R^n$.
Considering when equality holds in H\"older's inequality,
we have
\begin{equation*} 
   1    = \int_0^r g_\eps \what_\eps^{1/p} \what_\eps^{-1/p}\, d\rho
   = \biggl( \int_{B_r} g_\eps^p w_\eps\,dx\biggr)^{1/p}
       \biggl( \int_0^r \what_\eps^{1/(1-p)} \,d\rho\biggr)^{(p-1)/p}
\end{equation*}
which shows that
\[
   \cpRn(\{0\}, B_r)
   \le \cpRneps(\{0\}, B_r)
   \le \biggl( \int_0^r \what_\eps(\rho)^{1/(1-p)}\,d\rho  \biggr)^{1-p}.
\]
Letting $\eps \to 0$ concludes
the proof of the other inequality in~\eqref{eq-est-cap-p-ny} for $p>1$.
The last formula in the proposition is shown similarly.
\end{proof}

\begin{thm}   \label{thm-PI-bowtie-imp-Ap-R}
Assume that $\mu$ is a doubling radial measure on $\R^n$.
If $\mu$
supports a \p-Poincar\'e  inequality on $\Xbt$,
then the weight $\wt(\rho):=|\rho|^{n-1} w(|\rho|)$ is an $A_p$-weight on $\R$.
\end{thm}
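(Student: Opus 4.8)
The plan is to reduce the assertion to properties of $w$ on $\R^n$ and then read off the $A_p$-condition for $\wt$ from the capacity formula of Proposition~\ref{prop-est-cap-p-ny}.

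First I would record that, since radial weights are $T$-invariant, Proposition~\ref{prop-w=woT} (together with Lemma~\ref{lem-T-doubling}) shows that the \p-Poincar\'e inequality on $\Xbt$ forces both the \p-Poincar\'e inequality on $\Xbtp$ and the capacity condition \eqref{eq-cap-cond-Rn-X-r-p}, while by Theorem~\ref{thm-PI-Xplus-to-Rn} the former is equivalent to the \p-Poincar\'e inequality on~$\R^n$; in particular $w$ is \p-admissible on $\R^n$ and, by Corollary~\ref{cor-anncap}, $\cpRn(\{0\},B_r)\simeq r^{-p}\mu(B_r)$ for all $r>0$. Since $\mu(B_r)=\int_0^r\what\,d\rho$ with $\what=\om_{n-1}\wt$ on $(0,\infty)$, inserting Proposition~\ref{prop-est-cap-p-ny} and dividing out $\om_{n-1}$ turns this into
\[
   \Bigl(\vint_0^r\wt^{1/(1-p)}\,d\rho\Bigr)^{1-p}\simeq\vint_0^r\wt\,d\rho \ \ (p>1),
   \qquad
   \essinf_{0<\rho<r}\wt\simeq\vint_0^r\wt\,d\rho \ \ (p=1),
\]
for every $r>0$; that is, the $A_p$-inequality \eqref{eq-Ap-def} holds for $\wt$ on every interval $(0,r)$. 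As $\wt$ is even it then holds on every interval $(-r,0)$ as well, and splitting an arbitrary interval $I$ with $0\in\overline I$ at the origin into two such pieces — using that $1-p\le0$, so that shrinking the domain of integration only increases $\bigl(\int\wt^{1/(1-p)}\bigr)^{1-p}$ — gives \eqref{eq-Ap-def} for $\wt$ on every such $I$.

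It remains to verify \eqref{eq-Ap-def} for $\wt$ on $I=(a,b)$ with $0<a<b$. If $b\ge 2a$ this is immediate from the case just treated applied to $(0,b)$: $\int_a^b\wt\le\int_0^b\wt$ and $\int_a^b\wt^{1/(1-p)}\le\int_0^b\wt^{1/(1-p)}$ (resp.\ $\essinf_I\wt\ge\essinf_{(0,b)}\wt$), while $b-a\ge\tfrac12 b$, so the $A_p$-quotient of $\wt$ on $(a,b)$ is bounded by a multiple of that on $(0,b)$. The remaining case $0<a<b\le 2a$ is the substantive one. There $\rho\simeq a$ throughout $I$, so $\wt(\rho)\simeq a^{n-1}w(\rho)$ and \eqref{eq-Ap-def} for $\wt$ on $I$ is equivalent to the corresponding estimate for the one-dimensional weight $w$ on $I$. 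This I would extract from the \p-Poincar\'e inequality on $\R^n$: take the ball $B(x_0,\sigma)\subset\R^n$ with $|x_0|=\tfrac12(a+b)$ and $\sigma=b-a$, so that the middle half of its radial range is $I$; in polar coordinates the spherical-cap area of $\{|x|=\rho\}\cap B(x_0,\sigma)$ is comparable to $\sigma^{n-1}$ for $\rho\in I$, whence $\vint_{B(x_0,\sigma)}F(|x|)\,dx\simge\vint_I F\,d\rho$ for every nonnegative $F$. Restricting the \p-Poincar\'e inequality on $\R^n$ to radial functions and combining this with the doubling of $\mu$ on $\R^n$ (which in this range is the doubling of $w\,d\rho$ on $I$) yields a \p-Poincar\'e inequality for $w\,d\rho$ on $I$; Maz$'$ya's inequality \cite[Theorem~6.21]{BBbook} on $I$ together with the one-dimensional form of Proposition~\ref{prop-est-cap-p-ny} (the capacity of an interior point of $I$ relative to $I$ being comparable to $|I|^{-p}$ times the $w$-mass of $I$, which rests on $p$ exceeding the local lower dimension of $w\,d\rho$ at such points — itself a consequence of the \p-Poincar\'e inequality on $\R^n$) then converts it into \eqref{eq-Ap-def} for $w$ on $I$. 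Assembling the three cases and using evenness once more for intervals in $(-\infty,0)$ gives $\wt\in A_p(\R)$; for $p=1$ one runs the same argument with the $\essinf$-versions.

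The hard part will be the case $b\le 2a$: passing from the \p-Poincar\'e inequality on $\R^n$ to a genuine $A_p$-inequality for $w$ on the interval $I$ — in particular dealing with the spherical-cap factor produced by restricting a ball inequality to radial functions, and controlling the doubling and the local lower dimensions of $w\,d\rho$ away from the origin. (Should the implication ``\p-Poincar\'e inequality on $\R^n$ together with \eqref{eq-cap-cond-Rn-X-r-p} implies $w\in A_p(\R^n)$'' be available, this case shortens considerably: one applies the $A_p$-inequality for $w(|x|)$ directly on $B(x_0,\sigma)$ and uses the spherical-cap lower bound above for $F=w$ and $F=w^{1/(1-p)}$.) All the other steps are routine given the earlier sections.
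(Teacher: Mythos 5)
Your reduction matches the paper's: Proposition~\ref{prop-w=woT} and Theorem~\ref{thm-PI-Xplus-to-Rn} yield the \p-Poincar\'e inequality on $\R^n$ together with the capacity condition~\eqref{eq-cap-cond-Rn-X-r-p}; Corollary~\ref{cor-anncap} and Proposition~\ref{prop-est-cap-p-ny} convert the latter into the $A_p$-inequality for $\wt$ on intervals touching the origin; and the case $b\ge 2a$ reduces to these. All of that is sound and is essentially what the paper does.

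The gap is in the case $0<a<b\le 2a$, which you correctly identify as the substantive one. Two problems. First, Maz$'$ya's inequality points the wrong way: it takes a capacity lower bound as input and produces an $L^q$-estimate, so it cannot convert a one-dimensional Poincar\'e inequality into the lower bound $\capp(\{t\},I)\simge |I|^{-p}\int_I w\,d\rho$ that you need. Second, and more seriously, you justify that lower bound by asserting that $p$ exceeds the local lower mass-bound exponent of $w\,d\rho$ at interior points, "itself a consequence of the \p-Poincar\'e inequality on $\R^n$". This is unproven and essentially circular: by the paper's own Theorem~\ref{thm-p>1-uQ}, under doubling and a \p-Poincar\'e inequality the exponent condition $p>Q$ is \emph{equivalent} to the capacity lower bound, which via the one-dimensional capacity formula is equivalent to the $A_p$-inequality on $I$ --- i.e.\ to the statement being proved; moreover the \p-Poincar\'e inequality only yields lower mass bounds with exponents \emph{above} $p$ (via $(q,p)$-Sobolev embeddings and \cite[Proposition~4.20]{BBbook}), so no bound $<p$ is available from it. The paper avoids this entirely: for $I=(t-r,t+r)$ with $r<\tfrac14 t$ it tests the \p-Poincar\'e inequality on $\R^n$ on the ball $B((t,0,\dots,0),2r)$ with the explicit radial function $u(x)=\int_{t-r}^{\min\{t+r,|x|\}}(w(\rho)+\eps)^{1/(1-p)}\,d\rho$, whose upper gradient is supported in the annulus $t-r<|x|<t+r$ and which takes the two constant values $0$ and $a_{\eps,t,r}$ on subsets of $B((t,0,\dots,0),2r)$ of comparable measure; unwinding in spherical coordinates gives exactly the $A_p$-quotient (with the analogous $\essinf$ construction for $p=1$). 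Your route could be repaired by replacing the Maz$'$ya/dimension step with this kind of two-sided oscillation argument (the one used to prove condition~\ref{c-c-both} in Theorem~\ref{thm-general-PI}), applied either in $\R^n$ or to the one-dimensional weight once your spherical-cap reduction is carried out; but as written the key estimate for off-origin intervals is not established.
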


When $n=1$ this recovers
  Theorem~2 in Bj\"orn--Buckley--Keith~\cite{BBK-Ap}
  in the special case when $\mu$ is radial.

\begin{proof}
Proposition~\ref{prop-w=woT} shows that $\mu$
supports a \p-Poincar\'e  inequality
on $\Xbtp$
and that the capacity condition~\eqref{eq-cap-cond-Rn-X-r-p}
is satisfied.
Theorem~\ref{thm-PI-Xplus-to-Rn} implies that $\mu$
also supports a \p-Poincar\'e  inequality on $\R^n$.

Hence, by Proposition~\ref{prop-est-cap-p-ny},
the capacity condition~\eqref{eq-cap-cond-Rn-X-r-p} for $\cpRn( \{0\}, B_r)$ 
is for $p>1$ equivalent to 
\[
\biggl( \int_0^r \wt^{1/(1-p)}\,d\rho  \biggr)^{1-p} 
\simeq \frac{1}{r^{p}} \int_{B_r}w\,dx 
\simeq \frac{1}{r^{p}} \int_0^r  \wt \,d\rho.
\]
For $p=1$ we instead get by
Proposition~\ref{prop-est-cap-p-ny}
that
\eqref{eq-cap-cond-Rn-X-r-p} is equivalent to 
\[
\essinf_{(0,r)} \wt\simeq \frac1r \int_0^r  \wt\,d\rho.
\]
In both cases, this gives the $A_p$-condition for $\wt$ and intervals
in $\R$ centred at the origin.

If $I=(t-r,t+r)$ is an interval in $\R$ such that $r\ge\tfrac14|t|$, then
$I\subset (-5r,5r)$ and hence also
(when $p>1$)
\[
\int_I  \wt\,d\rho \biggl( \int_I \wt^{1/(1-p)}\,d\rho  \biggr)^{1-p} 
\le \int_{-5r}^{5r}  \wt\,d\rho \biggl( \int_{-5r}^{5r} \wt^{1/(1-p)}
       \,d\rho \biggr)^{p-1} 
\simeq (5r)^p.
\]
Thus, the $A_p$-condition for $\wt$ holds for $I$ when $p>1$, and
since 
\[
\essinf_{I} \wt \ge \essinf_{(0,5r)} \wt 
\simeq \frac{1}{5r} \int_0^{5r} \wt\,d\rho
\simge \vint_I  \wt\,d\rho,
\] 
also when $p=1$.

Now assume that $p>1$,
$I=(t-r,t+r)$ is an interval in $\R$ and that $r<\tfrac14|t|$.
We can assume that $t>0$.
Let $\eps>0$ be arbitrary and consider the function
\begin{equation}   \label{eq-def-u-by-int}
u(x) = \int_{t-r}^{\min\{t+r,|x|\}}  (w(\rho) +\eps)^{1/(1-p)} \,d\rho.
\end{equation}
Then $u$ is Lipschitz on $\R^n$, vanishes on $B(0,t-r)\subset\R^n$ and 
\begin{equation}   \label{eq-u=a}
u \equiv \int_{t-r}^{t+r} (w(\rho)+\eps)^{1/(1-p)} \,d\rho
=: a_{\eps,t,r} \quad \text{on } \R^n \setm B(0,t+r).
\end{equation}
We may assume that $w$ is a Borel function. Then clearly,
\[
g=(w(|x|)+\eps)^{1/(1-p)}\chi_{(t-r,t+r)}(|x|)
\] 
is an upper gradient of $u$.
We shall apply the \p-Poincar\'e inequality to $u$ in the ball
$B(z,2r)$, where $z=(t,0,\ldots,0)$.
Note that either
\begin{equation*}   
|u-u_{B(z,2r)}| = |u_{B(z,2r)}| \ge \tfrac12 a_{\eps,t,r} \quad \text{in } B(z,2r)\cap B(0,t-r)
\end{equation*}
or
\begin{equation*}
|u-u_{B(z,2r)}| = |a_{\eps,t,r}-u_{B(z,2r)}| \ge \tfrac12 a_{\eps,t,r} 
\quad \text{in } B(z,2r)\setm B(0,t+r).
\end{equation*}
Moreover, because $r<\tfrac14 t$ and $\mu$ is doubling,
\[
\mu\bigl(B(z,2r)\cap B(0,t-r)\bigr) 
\simeq \mu\bigl(B(z,2r)\setm B(0,t+r)\bigr) \simeq \mu(B(z,2r)).
\]
The \p-Poincar\'e inequality on $B(z,2r)$, together with the spherical
coordinates, 
then implies that 
\begin{align}
a_{\eps,t,r}^p &\simle r^p \vint_{B(z,2r)} g^p \, d\mu
  = r^p  \vint_{B(z,2r)} (w(|x|)+\eps)^{p/(1-p)} \chi_{(t-r,t+r)}(|x|) w(|x|) \,dx  \nonumber \\
  &\simle r^p  \frac{r^{n-1}}{\mu(B(z,2r))} \int_{t-r}^{t+r} (w(\rho)+\eps)^{1/(1-p)} 
        \,d\rho 
  =  r^p  \frac{r^{n-1}}{\mu(B(z,2r))} a_{\eps,t,r}.    
\label{eq-PI-gives-est-a}
\end{align}
Since by spherical coordinates,
\begin{equation}   \label{eq-est-mu(B(z,2r))}
  \mu(B(z,2r))\simeq
  \mu(B(z,r))\simeq
  r^{n-1} \int_{t-r}^{t+r} w \,d\rho,
\end{equation}
we conclude from~\eqref{eq-u=a} and~\eqref{eq-PI-gives-est-a} that 
\[
\int_{t-r}^{t+r} w(\rho)\,d\rho
\biggl( \int_{t-r}^{t+r} (w(\rho)+\eps)^{1/(1-p)} \,d\rho
\biggr)^{p-1}
\simle r^p.
\]
Letting $\eps\to0$ and noting that $\wt(\rho)\simeq t^{n-1}w(\rho)$
for all $\rho\in I$ gives the $A_p$-condition~\eqref{eq-Ap-def} 
for $I=(t-r,t+r)$ also when $r<\tfrac14|t|$, and concludes the proof
when $p>1$.

For $p=1$, replace~\eqref{eq-def-u-by-int} by the function
\[
u(x) = \int_{0}^{|x|} \chi_{A_\eps}(\rho) \,d\rho,
\]
where 
\[
A_\eps := \biggl\{ \rho\in I: w(\rho)  \le \essinf_{I} (w+\eps) \biggr\}.
\]
Since $\chi_{A_\eps}$ is an upper gradient of $u$
(because $w$ can be assumed to be a Borel function),
arguments similar
to those for $p>1$ lead to 
\[
|A_\eps| \simle r \vint_{B(z,2r)} \chi_{A_\eps}(|x|) w(|x|) \,dx  
  \simle \frac{r^{n}|A_\eps|\essinf_{I} (w+\eps)}{\mu(B(z,2r))}  
\]
Finally, using
\eqref{eq-est-mu(B(z,2r))},
dividing by $|A_\eps|>0$,
and letting $\eps\to0$ concludes the proof also for $p=1$.
\end{proof}

The following result is the last link in proving Theorem~\ref{thm-intro-radial}.

\begin{thm}   \label{thm-PI-Ap-R-imp-Rn+bowtie}
Assume that $w$ is radial on $\R^n$.
If the weight $\wt(\rho):=|\rho|^{n-1} w(|\rho|)$ is an
$A_p$-weight on $\R$, then $w$ is an $A_p$-weight on $\R^n$ and 
the capacity condition~\eqref{eq-cap-cond-Rn-X-r-p} holds.
\end{thm}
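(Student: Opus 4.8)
The plan is to prove the two assertions separately, in each case reducing everything to the one‑dimensional $A_p$‑condition for $\wt$. I would start with the capacity condition, which is the shorter half. Since a radial $\mu$ is $T$-invariant, Corollary~\ref{cor-anncap} reduces \eqref{eq-cap-cond-Rn-X-r-p} to $\cpRn(\{0\},B_r)\simeq r^{-p}\mu(B_r)$ for all $r>0$. Spherical coordinates give $\mu(B_r)=\int_0^r\what\,d\rho$ with $\what(\rho)=\om_{n-1}\wt(\rho)$ for $\rho>0$, and Proposition~\ref{prop-est-cap-p-ny} evaluates $\cpRn(\{0\},B_r)$ explicitly. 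For $p>1$ the desired equivalence then becomes, after cancelling $\om_{n-1}$ and using that $\wt$ is even,
\[
\bigl(\vint_{(-r,r)}\wt\bigr)\bigl(\vint_{(-r,r)}\wt^{1/(1-p)}\bigr)^{p-1}\simeq1,
\]
where $\simge1$ is Hölder's inequality and $\simle1$ is the $A_p$-condition for $\wt$; for $p=1$ it becomes $\vint_{(-r,r)}\wt\simeq\essinf_{(-r,r)}\wt$, with $\simge$ trivial and $\simle$ the $A_1$-condition.

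For the assertion that $w\in A_p(\R^n)$ I would estimate the $A_p$-quotient $\bigl(\vint_Bw\,dx\bigr)\bigl(\vint_B\sigma\,dx\bigr)^{p-1}$ of an arbitrary ball $B=B(z,r)$, where $\sigma=w^{1/(1-p)}$, splitting according to whether $t:=|z|\le8r$ or $t>8r$. (When $n=1$ there is nothing to prove, since then $\wt=w$ is even.) The key bookkeeping identity is $|\rho|^{n-1}\sigma(|\rho|)=|\rho|^{(n-1)p/(p-1)}\wt(\rho)^{1/(1-p)}$, in which the exponent on $|\rho|$ is \emph{nonnegative}; this is what will let me transfer the one‑dimensional $A_p$-bound up to $\R^n$.

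If $t\le8r$, then $B\subset B(0,9r)$, so spherical coordinates bound $\int_Bw\,dx$ and $\int_B\sigma\,dx$ by $\om_{n-1}\int_0^{9r}\wt\,d\rho$ and $\om_{n-1}\int_0^{9r}|\rho|^{(n-1)p/(p-1)}\wt^{1/(1-p)}\,d\rho$. Since $|\rho|^{(n-1)p/(p-1)}\le(9r)^{(n-1)p/(p-1)}$ on $(0,9r)$, inserting this, using $|B|\simeq r^n$ and the evenness of $\wt$, one finds that all powers of $r$ cancel and the $A_p$-quotient of $B$ is $\simle(\vint_{(-9r,9r)}\wt)(\vint_{(-9r,9r)}\wt^{1/(1-p)})^{p-1}\simle1$. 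For $p=1$ the same comparison with $B(0,9r)$, together with $\essinf_Bw=\essinf_{I^*}w$ for the interval of radii $I^*=\{|x|:x\in B\}\subseteq[0,9r)$ and the elementary bound $\essinf_{(0,9r)}\wt\le(9r)^{n-1}\essinf_{I^*}w$, gives $\vint_Bw\,dx\simle\essinf_Bw$.

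If $t>8r$, then $\rho\simeq t$ whenever $\rho\in(t-r,t+r)$, and the slice $B\cap\{|x|=\rho\}$ is a spherical cap whose $(n-1)$-measure (by the law of cosines) is $\simle\rho^{n-1}(r/t)^{n-1}$ throughout $(t-r,t+r)$ and $\simeq\rho^{n-1}(r/t)^{n-1}$ on the concentric interval $(t-\tfrac r2,t+\tfrac r2)$. Integrating in $\rho$, and using that $\wt$ and $\wt^{1/(1-p)}$ — both being $A_p$-type weights on $\R$ — are doubling on $\R$ (so the concentric half-interval carries comparable mass), I obtain $\int_Bw\,dx\simeq(r/t)^{n-1}\int_{t-r}^{t+r}\wt\,d\rho$ and, with $|\rho|^{(n-1)p/(p-1)}\wt^{1/(1-p)}\simeq t^{(n-1)p/(p-1)}\wt^{1/(1-p)}$ on $(t-r,t+r)$, the analogue for $\sigma$. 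With $|B|\simeq r^n$ the powers of $t$ cancel, leaving the $A_p$-quotient of $B$ $\simeq(\vint_{(t-r,t+r)}\wt)(\vint_{(t-r,t+r)}\wt^{1/(1-p)})^{p-1}\simle1$; for $p=1$ one argues identically with essential infima, using $\essinf_Bw=\essinf_{(t-r,t+r)}w\simeq t^{-(n-1)}\essinf_{(t-r,t+r)}\wt$. I expect the far-ball step to be the main obstacle: obtaining \emph{matching} upper and lower bounds for the cap measures (the lower bound is precisely where the doubling of $\wt$ enters) and keeping track of the exact cancellation of the powers of $t$; the $p=1$ near-origin case is also slightly delicate because of the need to identify $\essinf_Bw$ with an essential infimum over a one-dimensional interval of radii.
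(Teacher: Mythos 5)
Your proposal is correct and follows essentially the same route as the paper: both parts reduce to the one-dimensional $A_p$-condition for $\wt$ via spherical coordinates, with the same near/far ball dichotomy (enlarging near balls to balls centred at the origin and exploiting $\rho\simeq|z|$ for far balls) and the same use of Proposition~\ref{prop-est-cap-p-ny} for the capacity condition. The only difference is cosmetic: since the $A_p$-quotient only requires upper bounds on $\int_B w\,dx$ and $\int_B w^{1/(1-p)}\,dx$, your matching lower bounds for the spherical caps and the appeal to the doubling of $\wt$ in the far-ball case are superfluous.
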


\begin{proof}
Assume that $\wt$ is an $A_p$-weight on $\R$ and let
$B=B(z,r)\subset\R^n$ be a ball.
If $r\le\tfrac14|z|$ then let $I=(|z|-r,|z|+r)$ be the corresponding interval in $\R$.
Note that $\rho\simeq|z|$ for all $\ro\in I$.
Using spherical coordinates we therefore have
\begin{equation}    \label{eq-int-w-int-wt}
\int_B w\,dx \simle |z|^{1-n} r^{n-1} \int_I \wt(\rho) \,d\rho
\end{equation}
and similarly (when $p>1$),
\[
\int_B w^{1/(1-p)}\,dx \simle |z|^{(n-1)/(p-1)} r^{n-1} \int_I \wt^{1/(1-p)}(\rho) \,d\rho.
\]
Multiplying the last two estimates, cancelling $|z|$ and using the $A_p$-condition 
for $\wt$ on $I$ yields
\begin{equation}   \label{eq-Ap-B}
\int_B w\,dx \biggl( \int_B w^{1/(1-p)}\,dx \biggr)^{p-1} \simle r^{p(n-1)}r^p = r^{np},
\end{equation}
i.e.\ the $A_p$-condition for $w$ holds on $B$ when $p>1$.
For $p=1$ we instead combine~\eqref{eq-int-w-int-wt} with the
$A_1$-condition for $\wt$ and with
\[
\essinf_I \wt \simle
|z|^{n-1} \essinf_B w.
\]

If $r>\tfrac14|z|$, then $B\subset B(0,5r)$ and it therefore remains
to consider balls centred at the origin.
For such balls, spherical coordinates show that
(when $p>1$)
\begin{align*}
\int_{B_r} w^{1/(1-p)}\,dx &\simeq \int_0^r w(\rho)^{1/(1-p)} \rho^{n-1} \,d\rho
\le r^{(n-1)p/(p-1)} \int_0^r \wt(\rho)^{1/(1-p)} \,d\rho, \\
\int_{B_r} w\,dx &\simeq \int_0^r \wt(\rho) \,d\rho \quad \text{and} \quad 
\essinf_{(0,r)} \wt \le r^{n-1} \essinf_{B_r} w.
\end{align*}
This and the $A_p$-condition for $\wt$ immediately give~\eqref{eq-Ap-B}, 
as well as its analogue for $p=1$, also for $B_r$.
Thus $w$ is an $A_p$-weight on $\R^n$.

Finally, it remains
to note that by
Proposition~\ref{prop-est-cap-p-ny},
the lower bound in
the capacity condition~\eqref{eq-cap-cond-Rn-X-r-p} 
is equivalent to the assumed $A_p$-condition for $\wt$ and $B_r$.
The upper bound in~\eqref{eq-cap-cond-Rn-X-r-p}  then follows
from Lemma~\ref{lem-cp-cond-char}.
\end{proof}

We are now ready to deduce Theorem~\ref{thm-intro-radial}.

\begin{proof}[Proof of Theorem~\ref{thm-intro-radial}]
  \ref{f-1}$\imp$\ref{f-2}
  This follows from Proposition~\ref{prop1}.

  \ref{f-2}$\eqv$\ref{f-3}
  This follows from Theorem~\ref{thm-general-PI} or
  Proposition~\ref{prop-w=woT}.

  \ref{f-3}$\imp$\ref{f-5}
  This follows from Theorem~\ref{thm-PI-bowtie-imp-Ap-R}.

  \ref{f-5}$\imp$\ref{f-4}
  This follows from Theorem~\ref{thm-PI-Ap-R-imp-Rn+bowtie}.

  \ref{f-4}$\imp$\ref{f-1}
This follows from  Heinonen--Kilpel\"ainen--Martio~\cite[Theorem~15.21]{HeKiMa} (for $p>1$)
and Bj\"orn~\cite[Theorem~4]{JB-Fenn} (for $p=1$).
\end{proof}

We end this section with the following result for $p=1$, 
which is in contrast to the case $p>1$ in Theorem~\ref{thm-p>1-uQ}.

\begin{prop} \label{prop-p=1-uQ}
Assume that
$\mu$  is a doubling radial  measure on $\R^n$ which supports a
$1$-Poincar\'e inequality. 
Let $0<R_0\le\infty$.
Then
the following conditions are equivalent\/\textup{:}
\begin{enumerate}    
\item \label{k-1}
$\displaystyle \frac{\mu(B_\rho)}{\mu(B_r)} \simge \frac{\rho}{r}
   \  \text{for all } 0 < \rho <r <R_0$,
i.e.\ $\qmuoRn=\qmuoXbtp \le 1;$
\item \label{k-2}  
$ 
   \coneRn(B_\rho,B_r) \simge r^{-1}\mu(\Br)
   \ \text{for all } 0 < \rho <r<R_0;
$ 
\item \label{k-3}  
$ 
   \coneRn(\{x_0\},B_r) \simge r^{-1}\mu(\Br)
   \ \text{for all } 0<r<R_0.
$ 
\end{enumerate}
\end{prop}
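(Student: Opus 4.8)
The plan is to reduce all three conditions to statements about the radial density $\what(\rho)=\om_{n-1}w(\rho)\rho^{n-1}$. By Proposition~\ref{prop-est-cap-p-ny} with $p=1$,
\[
  \coneRn(\{0\},B_r)=\essinf_{0<\rho<r}\what(\rho)
  \qquad\text{and}\qquad
  \coneRn(B_\rho,B_r)=\essinf_{\rho<s<r}\what(s),
\]
and, writing $M(r):=\mu(B_r)=\int_0^r\what(\rho)\,d\rho$, condition \ref{k-3} reads $\essinf_{(0,r)}\what\simge M(r)/r$, condition \ref{k-2} reads $\essinf_{(\rho,r)}\what\simge M(r)/r$, and condition \ref{k-1} reads $M(\rho)/\rho\simge M(r)/r$, i.e.\ $t\mapsto M(t)/t$ is almost non-increasing on $(0,R_0)$. (The equality $\qmuoRn=\qmuoXbtp$ is immediate from $\mu(B_\rho\cap\Xbtp)\simeq\mu(B_\rho)$, cf.~\eqref{eq-Bplus-B}, and $\qmuoRn\le1$ just restates the displayed inequality in \ref{k-1}.)

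First I would dispose of the easy implications. Since $\coneRn(B_\rho,B_r)\ge\coneRn(\{0\},B_r)$, \ref{k-3}$\Rightarrow$\ref{k-2} is immediate. For \ref{k-2}$\Rightarrow$\ref{k-3} I would let $\rho\downarrow0$: as $(0,r)$ is the increasing union of the intervals $(\rho,r)$, $\essinf_{(0,r)}\what=\inf_{\rho>0}\essinf_{(\rho,r)}\what\simge M(r)/r$, the implied constant in \ref{k-2} being independent of $\rho$. For \ref{k-3}$\Rightarrow$\ref{k-1} I would use that $t\mapsto\essinf_{(0,t)}\what$ is non-increasing: for $0<\rho<r<R_0$,
\[
  M(\rho)=\int_0^\rho\what\ge\rho\,\essinf_{(0,\rho)}\what\ge\rho\,\essinf_{(0,r)}\what=\rho\,\coneRn(\{0\},B_r)\simge\frac{\rho}{r}M(r).
\]

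The substantive step is \ref{k-1}$\Rightarrow$\ref{k-3}, and it rests on two facts about $\what$. First, since $\mu$ is doubling and $\R^n$ is connected, a spherical shell carries a definite proportion of the ball it encloses: for any fixed $\theta\in(0,1)$ and $0<t<R_0$ one has $M(t)-M(\theta t)=\mu(B_t\setm B_{\theta t})\simeq M(t)$ by \cite[Lemma~3.7]{BBbook} (combined with doubling), so that $\vint_{(\theta t,\,t)}\what\simge M(t)/t$. Secondly, because $w$ is radial and $\mu$ supports a $1$-Poincar\'e inequality on $\R^n$, the density $\what$ obeys an $A_1$-condition on intervals bounded away from the origin: $\vint_I\what\simle\essinf_I\what$ whenever $I=(t-s,t+s)$ with $s\le ct$ for a suitable $c\in(0,1)$. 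This is exactly what the last part of the proof of Theorem~\ref{thm-PI-bowtie-imp-Ap-R} establishes (the case where $I$ does not surround the origin): there the Poincar\'e inequality is applied on a ball $B(z,2s)$ with $|z|=t$, which for $s\le ct$ is disjoint from the origin, so only the inequality on $\R^n$ is used. Now I would fix $\theta\in(0,1)$ close enough to $1$ that each $I_k:=(\theta^{k+1}r,\theta^k r)$ — with centre $\tfrac12(1+\theta)\theta^kr$ and radius $\tfrac12(1-\theta)\theta^kr$ — is of this admissible ``away from the origin'' type, and combine the two facts together with \ref{k-1}:
\[
  \essinf_{I_k}\what\simge\vint_{I_k}\what=\frac{M(\theta^kr)-M(\theta^{k+1}r)}{(1-\theta)\theta^kr}\simge\frac{M(\theta^kr)}{\theta^kr}\simge\frac{M(r)}{r},
\]
with all implied constants independent of $k$ and $r$. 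As $(0,r)$ is the disjoint union of the $I_k$, $k\ge0$, this gives $\essinf_{(0,r)}\what=\inf_{k\ge0}\essinf_{I_k}\what\simge M(r)/r$, i.e.\ \ref{k-3}. (When $\qmuoRn<1$ one may instead invoke \cite[Proposition~6.1]{BBLeh1} and Lemma~\ref{lem-cp-cond-char} for \ref{k-1}$\Rightarrow$\ref{k-2}; the argument above has the advantage of covering the borderline value $\qmuoRn=1$ uniformly.)

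The step I expect to demand the most care is the second fact above, extracting the ``$A_1$ away from the origin'' behaviour of the radial profile from the $1$-Poincar\'e inequality on $\R^n$ alone: one must keep the test ball disjoint from the origin, verify that the competitor obtained by integrating $\chi_{A_\eps}$ in the radial variable is Lipschitz with an upper gradient supported in the shell, and use the spherical-coordinate identity $\mu(B(z,2s))\simeq s^{n-1}\int_I w$ together with doubling precisely as in the proof of Theorem~\ref{thm-PI-bowtie-imp-Ap-R}; the surface-area factors $s^{n-1}$ are the most error-prone part of the bookkeeping. Everything else is a direct translation through Proposition~\ref{prop-est-cap-p-ny} or an elementary shell/covering estimate.
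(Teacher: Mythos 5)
Your proof is correct, but the substantive implication is handled by a genuinely different route than the paper's. The paper proves \ref{k-1}$\imp$\ref{k-2} by applying the $1$-Poincar\'e inequality once, on $B_{2r}$ with the thin-annulus test function $u=\min\{(r+\eps-|x|)/2\eps,1\}_\limplus$, and letting $\eps\to0$ to get the pointwise bound $\mu(B_r)/r\simle\wt(r)$ for a.e.\ $r$; combined with \ref{k-1} and the essinf formula from Proposition~\ref{prop-est-cap-p-ny}, this finishes the argument in three lines. You instead prove the averaged bound $\vint_{I_k}\what\simge M(\theta^kr)/\theta^kr$ on a geometric decomposition of $(0,r)$ and upgrade averages to essential infima via the ``$A_1$ away from the origin'' property of $\what$, which you correctly extract from the $p=1$, $r<\tfrac14|t|$ case of the proof of Theorem~\ref{thm-PI-bowtie-imp-Ap-R} (that case indeed uses only the Poincar\'e inequality on $\R^n$, doubling and radiality). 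Both are valid; the paper's version is shorter and avoids re-running the $A_1$ argument, at the cost of a Lebesgue-differentiation step and an ``a.e.\ $r$'' qualifier, while yours localizes away from the origin and needs no limit in $r$. Your handling of \ref{k-2}$\eqv$\ref{k-3} directly through the essinf formula (rather than via Lemma~\ref{lem-cp-cond-char}) and of \ref{k-3}$\imp$\ref{k-1} by $M(\rho)\ge\rho\essinf_{(0,\rho)}\what$ is also fine. One small point to tighten: for $\theta$ close to $1$ the corona estimate $\mu(B_t\setm B_{\theta t})\simeq\mu(B_t)$ does not follow from reverse doubling alone (the exponent bound $\mu(B_{\theta t})\le C\theta^\sigma\mu(B_t)$ is useless when $C\theta^\sigma\ge1$); justify it instead by placing a ball $B(x,\tfrac12(1-\theta)t)$ with $|x|=\tfrac12(1+\theta)t$ inside the annulus and comparing it to $B_t$ by doubling, which gives a constant depending on $\theta$ and the doubling constant only --- exactly what your fixed choice of $\theta>3/5$ requires.
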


\begin{proof}
\ref{k-2}$\eqv$\ref{k-3} This follows directly
    from Lemma~\ref{lem-cp-cond-char}.

\ref{k-2}$\imp$\ref{k-1}
For $\rho\le\tfrac12r$, this follows from the simple estimate
$\coneRn(B_\rho,B_r) \simle \rho^{-1}\mu(B_\rho)$, which is obtained by testing
the capacity with  
\[
u(x)=\min\{1,(2-|x|/\rho)_{\limplus}\}. 
\]
The monotonicity of $\mu$ takes care of $\tfrac12r<\rho<r$.

\ref{k-1}$\imp$\ref{k-2}
Let $\wt(t):=w(t) t^{n-1}$,
$0 < \eps < r$ and
\[
u(x)=\min\biggl\{\frac{r+\eps-|x|}{2\eps},1\biggr\}_\limplus.
\]
Then, by the doubling property of $\mu$
and the $1$-Poincar\'e inequality,
\[
\mu(B_{r-\eps}) \simle
  \int_{B_{2r}} |u - u_{B_{2r}}|\, d\mu
  \simle
    r \int_{r-\eps}^{r+\eps}  \frac{1}{2\eps} \wt(t) \, dt.
\]
Letting $\eps \to 0$ shows that
\[
\frac{\mu(B_r)}{r} \simle \wt(r)
  \quad \text{for a.e. } r >0.
\]
On the other hand, by Proposition~\ref{prop-est-cap-p-ny}
and using \ref{k-1},
\begin{equation*} 
  \coneRn(B_{\rho},B_r)
\simeq \essinf_{\rho<t<r} \wt(t)
\simge \essinf_{\rho<t<r} \frac{\mu(B_t)}{t}
\simge \frac{\mu(B_r)}{r}
\quad 
   \text{if } 0 <  \rho <r.
\qedhere
\end{equation*}
\end{proof}

We are now going to give an example (of a radial $1$-admissible weight)
such that the range of $p$ for which the capacity
condition~\eqref{eq-cap-cond-Rn-X-r-p} holds is
considerably smaller than the range for which 
$\cpXbtp(\{0\},\Br)>0$.  

\begin{example} \label{ex-3.2}
We will follow the construction
in Bj\"orn--Bj\"orn--Lehrb\"ack~\cite[Example~3.2]{BBLeh1}.
Let $\alp_k=2^{-2^k}$ and
 $\be_k= \alp_k^{3/2}=2^{-3\cdot 2^{k-1}}$, $k=0,1,2,\ldots$\,.
Note that $\alp_{k+1}= \alp_k^2$. 
Consider the measure $d\mu=w(|y|)\,dy$ on $\R^2$, where
\[
      w(\rho)=\begin{cases}
                     \alp_{k+1}, & \text{if } \alp_{k+1} \le \rho \le \be_k, 
                     \ k=0,1,2,\ldots,\\
                     \rho^2/ \alp_{k}, & \text{if } \be_k \le \rho \le \alp_{k}, 
                     \ k=0,1,2,\ldots,\\
                     \rho, & \text{if } \rho \ge \tfrac{1}{2}.
        \end{cases}
\] 
It follows from Example~3.2 in~\cite{BBLeh1} that $w$ is $1$-admissible,
$\qmuo=4$ and 
\[
\text{$\mu(B_r)\simge r^{s}$ for all $0<r<1$}
\quad \text{if and only if}
\quad s\ge\tfrac{10}{3}.
\]
We thus conclude from Theorem~\ref{thm-p>1-uQ} that
the capacity condition~\eqref{eq-cap-cond-Rn-X-r-p} holds
if and only if $p>4$.
Proposition~8.2 in~\cite{BBLeh1} shows that 
\[    
\cpXbtptwo(\{0\},B_r)>0 \quad \text{if } p >\tfrac{10}{3} \text{ and } r>0,
\]
so for $p\in(\tfrac{10}{3},4]$, the weighted bow-tie does not support a \p-Poincar\'e 
inequality, even though the glueing point has positive capacity.
A straightforward calculation, using the formula in Proposition~\ref{prop-est-cap-p-ny}, 
shows that $\capp_{\frac{10}{3},\mu}^{\XRtp}(\{0\},B_r)=0$.

Given $b>a>1$,
Example~3.4 in~\cite{BBLeh1} provides similar radial weights, such that 
$\qmuo=b$ and hence the capacity condition~\eqref{eq-cap-cond-Rn-X-r-p} holds
exactly when $p>b$,   
while $\cpRt(\{0\},B_r)>0$ for all $p>a$.
\end{example}

\section{Logarithmic power weights on \texorpdfstring{$\R^n$}{Rn}}
\label{sect-log-power}

Let from now on $\phi(\rho) = \max \{1, -\log \rho \}$.
The following integral estimate is
straightforward, but a bit tedious.

\begin{lem}\label{lem-intprop}
It is true that
\[ 
\int_0^r \rho^{a-1} \phi(\rho)^b \, d\rho \simeq   
\begin{cases}
r^{a}\phi(r)^b, & \text{if } a>0 \text{ and } b \in \R, \\
\phi(r)^{b+1},  & \text{if }  a=0,\  b<-1 \text{ and } r \le 1, \\
1+\log r,  & \text{if }  a=0,\  b<-1 \text{ and } r > 1, \\
\infty,  & \text{if } a <0, \text{ or } a=0 \text{ and } b \ge -1. \\
\end{cases} 
\] 
\end{lem}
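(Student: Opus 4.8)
The plan is to split the analysis according to the sign of $a$, handling each of the four cases in the bracketed formula. Throughout, recall that $\phi(\rho)=\max\{1,-\log\rho\}$ equals $1$ on $[1,\infty)$ and equals $-\log\rho\ge1$ on $(0,1]$, so the behaviour of the integrand changes at $\rho=1$, and it is convenient to treat separately the portions of $(0,r)$ inside and outside $(0,1)$.

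First I would dispose of the divergent case $a\le0$ with $b\ge-1$ (and the case $a<0$ with any $b$): near $\rho=0$ we have $\rho^{a-1}\phi(\rho)^b = \rho^{a-1}(-\log\rho)^b$, and since $\int_0^{1/2}\rho^{a-1}(-\log\rho)^b\,d\rho$ diverges whenever $a<0$, or whenever $a=0$ and $b\ge-1$ (the latter by the substitution $t=-\log\rho$, reducing to $\int_{\log 2}^\infty t^b\,dt=\infty$), the full integral is $\infty$; this gives the last line. Next, for $a>0$ and $b\in\R$ I would argue that the integral converges and is $\simeq r^a\phi(r)^b$. When $r\le1$, substitute $\rho=r\sigma$ to write the integral as $r^a\int_0^1\sigma^{a-1}\phi(r\sigma)^b\,d\sigma$; since $\phi(r\sigma)=-\log r-\log\sigma$ and $-\log r=\phi(r)$, one has $\phi(r\sigma)=\phi(r)(1-\log\sigma/\phi(r))$, and a standard estimate (splitting $\sigma\in(0,\phi(r))$-type ranges, or just using that $\int_0^1\sigma^{a-1}(1+|\log\sigma|)^{|b|}\,d\sigma<\infty$ together with monotone comparison) shows $r^a\int_0^1\sigma^{a-1}\phi(r\sigma)^b\,d\sigma\simeq r^a\phi(r)^b$ with constants depending only on $a,b$. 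When $r>1$, split $\int_0^r=\int_0^1+\int_1^r$; the first piece is a finite constant (using $a>0$) and the second is $\int_1^r\rho^{a-1}\,d\rho\simeq r^a=r^a\phi(r)^b$, so the sum is $\simeq r^a$ since $r^a\gtrsim1$.

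Finally, for the borderline case $a=0$ and $b<-1$, I would again split at $\rho=1$. For $r\le1$ the integral is $\int_0^r\rho^{-1}(-\log\rho)^b\,d\rho$; the substitution $t=-\log\rho$ turns this into $\int_{-\log r}^\infty t^b\,dt=\int_{\phi(r)}^\infty t^b\,dt = \phi(r)^{b+1}/(-b-1)\simeq\phi(r)^{b+1}$, using $b+1<0$. For $r>1$ one has $\int_0^r=\int_0^1\rho^{-1}(-\log\rho)^b\,d\rho+\int_1^r\rho^{-1}\,d\rho$; the first term is the finite constant $\int_0^\infty t^b\,dt$ and the second is $\log r$, so the sum is $\simeq 1+\log r$ (note both sides are $\ge$ a positive constant, so the additive $1$ is harmless). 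This exhausts all cases. The only mildly delicate point — the part I expect to require the most care — is the comparison $r^a\int_0^1\sigma^{a-1}\phi(r\sigma)^b\,d\sigma\simeq r^a\phi(r)^b$ for $a>0$ and $b<0$ when $r$ is very small, since there $\phi(r\sigma)$ ranges from $\phi(r)$ up to $+\infty$ as $\sigma\to0$; but the weight $\sigma^{a-1}$ is integrable at $0$ and $\phi(r\sigma)^b$ only decreases there, so the lower bound is immediate from restricting to $\sigma\in(\tfrac12,1)$, and the upper bound follows by bounding $\phi(r\sigma)^b\le(\phi(r)+|\log\sigma|)^b$ appropriately or, more cleanly, by noting $\phi(r\sigma)\ge\phi(r)$ gives nothing for $b<0$ and instead using $\phi(r\sigma)\le \phi(r)+(-\log\sigma)$ together with $(x+y)^b\le x^b$ only when... — so in practice I would just invoke the known asymptotics $\int_0^r\rho^{a-1}(-\log\rho)^b\,d\rho\sim a^{-1}r^a(-\log r)^b$ as $r\to0^+$ (a routine consequence of l'Hôpital or Karamata's theorem) to close this case quickly.
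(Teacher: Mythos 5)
The paper states Lemma~\ref{lem-intprop} without proof (``straightforward, but a bit tedious''), so there is no argument to compare against; judged on its own, your case analysis is complete and correct, and the verdict is that the proof goes through. One remark: the step you flag as delicate --- the two-sided bound $r^a\int_0^1\sigma^{a-1}\phi(r\sigma)^b\,d\sigma\simeq r^a\phi(r)^b$ for $a>0$, $b<0$ and small $r$ --- is actually the easy direction you talked yourself out of. Since $\phi$ is nonincreasing, $\phi(r\sigma)\ge\phi(r)$, and for $b<0$ this gives $\phi(r\sigma)^b\le\phi(r)^b$ pointwise, so the \emph{upper} bound is immediate ($\le\phi(r)^b/a$); the lower bound comes from $\sigma\in(\tfrac12,1)$, where $\phi(r\sigma)\le\phi(r/2)\le(1+\log 2)\,\phi(r)$. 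There is no need to invoke l'H\^opital or Karamata (though that route also closes the case, provided you note that the asymptotic at $0^{\limplus}$ combines with positivity and continuity on compact subsets of $(0,1]$ to give uniform constants). For $b>0$ the upper bound follows from $\phi(r\sigma)\le\phi(r)+\phi(\sigma)\le 2\phi(r)\phi(\sigma)$ and the finiteness of $\int_0^1\sigma^{a-1}\phi(\sigma)^b\,d\sigma$. Two cosmetic slips: in the case $a=0$, $b<-1$, the substitution $t=-\log\rho$ produces $\int\max\{1,t\}^b\,dt$, not $\int t^b\,dt$ (so the lower limit is $-\log r$, which equals $\phi(r)$ only for $r\le e^{-1}$, and the ``finite constant'' for $r>1$ is $\int_0^\infty\max\{1,t\}^b\,dt$, since $\int_0^\infty t^b\,dt$ diverges at $0$); neither affects the comparability.
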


\begin{prop}\label{prop-A1w}
Let  $w(|x|)=|x|^{\alpha} \phi(|x|)^{\beta}$
be a weight on $\R^n$,  where 
$\alp > -n$ and $\be \in \R$.
Then $w$ is an $A_1$-weight if and only if
$\al<0$ or $\al=0 \le \be $.
\end{prop}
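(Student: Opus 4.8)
The plan is to verify the $A_1$-condition~\eqref{eq-Ap-def} directly, distinguishing balls far from the origin from balls close to (or containing) the origin. For a ball $B=B(z,r)\subset\R^n$ with $r\le\tfrac14|z|$, the weight is essentially constant on $B$: for every $x\in B$ we have $|x|\simeq|z|$, and $\phi(|x|)\simeq\phi(|z|)$ since $\phi$ is slowly varying (one checks $\phi(\tfrac34 t)\simeq\phi(\tfrac54 t)$ for all $t>0$). Hence $\vint_B w\,dx\simeq w(|z|)\simeq\essinf_B w$, so the $A_1$-condition holds on such balls with no restriction on $\al,\be$. For $r>\tfrac14|z|$ we have $B\subset B(0,5r)$, so it suffices to treat balls $B_r=B(0,r)$ centred at the origin; this is where the conditions on $\al$ and $\be$ enter.

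For $B_r$, spherical coordinates give
\[
\vint_{B_r} w\,dx \simeq \frac{1}{r^n}\int_0^r \rho^{\al+n-1}\phi(\rho)^\be\,d\rho,
\]
which by Lemma~\ref{lem-intprop} (applied with $a=\al+n>0$, using $\al>-n$) is comparable to $r^\al\phi(r)^\be$. On the other hand,
\[
\essinf_{B_r} w = \essinf_{0<\rho<r}\rho^\al\phi(\rho)^\be .
\]
The proof then reduces to comparing $r^\al\phi(r)^\be$ with this essential infimum. First I would record the monotonicity behaviour of $\psi(\rho):=\rho^\al\phi(\rho)^\be$ on $(0,1)$: writing $\psi(\rho)=\rho^\al(-\log\rho)^\be$ there, a derivative computation shows that for $\al<0$ the factor $\rho^\al$ dominates and $\psi$ is eventually decreasing as $\rho\to0$, in fact $\psi(\rho)\to\infty$, so $\essinf_{(0,r)}\psi=\psi(r)\simeq r^\al\phi(r)^\be$ for small $r$ (and for $r\ge1$ one argues separately that $\essinf$ is attained near $\rho=1$ and is comparable to $\vint_{B_r}w$, since then $\vint_{B_r}w\simeq r^\al\simeq\essinf_{B_r}w$ as $\phi\equiv1$ on $[1,\infty)$ up to the behaviour near $\rho=1$). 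When $\al=0$ and $\be\ge0$, $\phi(\rho)^\be$ is nondecreasing as $\rho$ decreases, so again $\essinf_{(0,r)}\phi^\be=\phi(r)^\be$ for $r\le1$, and for $r\ge1$ the weight is bounded between two positive constants on the relevant range, giving the $A_1$-condition. This proves the "if" direction.

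For the "only if" direction, suppose the $A_1$-condition fails to be forced, i.e.\ $\al>0$, or $\al=0$ and $\be<0$. If $\al>0$ then $\psi(\rho)=\rho^\al\phi(\rho)^\be\to0$ as $\rho\to0^+$, so $\essinf_{B_r}w=0$ while $\vint_{B_r}w\simeq r^\al\phi(r)^\be>0$, contradicting $A_1$. If $\al=0$ and $\be<0$ then $\phi(\rho)^\be\to0$ as $\rho\to0^+$ (since $\phi(\rho)\to\infty$), so again $\essinf_{B_r}w=0<\vint_{B_r}w$, and $A_1$ fails. The main obstacle, such as it is, is purely bookkeeping: handling the transition at $\rho=1$ (where $\phi$ switches from $-\log\rho$ to the constant $1$) and making sure the comparisons $\phi(|x|)\simeq\phi(|z|)$ on $B(z,r)$ with $r\le\tfrac14|z|$ are justified uniformly; the case $n=1$ (with $\om_0=2$) and the degenerate values $r\simeq1$ deserve a line of care, but no genuinely new idea is needed beyond Lemma~\ref{lem-intprop} and the slow variation of $\phi$.
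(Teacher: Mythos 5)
Your proposal is correct and follows essentially the same route as the paper: reduce to balls centred at the origin (off-origin balls being handled by the near-constancy, respectively approximate monotonicity, of $w$), compute $\vint_{B_r}w\,dx\simeq r^\al\phi(r)^\be$ via Lemma~\ref{lem-intprop}, and compare with $\essinf_{B_r}w$ using the approximate monotonicity of $\rho\mapsto\rho^\al\phi(\rho)^\be$; the "only if" direction in both cases comes down to the weight having essential infimum $0$ near the origin when $\al>0$ or $\al=0>\be$. The only cosmetic difference is that you treat balls with $r\le\tfrac14|z|$ by slow variation of $\phi$ (valid for all $\al,\be$), whereas the paper uses the approximate decrease of $w$ (which needs the standing hypothesis), but this does not change the substance of the argument.
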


\begin{proof}
It follows directly from the definition that a continuous 
$A_1$-weight cannot vanish at any point. 
Hence, $\al>0$ as well as purely logarithmic weights with $\be<0=\al$ cannot 
give  $A_1$-weights. 

Conversely assume that $\al<0$ or $\al=0 \le \be $.
Note that $\phi(r') \simeq \phi(r)$ if $r \le r' \le 2r$,
and thus also $w(r') \simeq w(r)$ if $r \le r' \le 2r$.
Moreover $w$ is approximately decreasing in the sense that
\[ 
w(r) \simge w(r')
\quad \text{whenever } 0 < r < r'.
\] 
Let $B=B(x_0, r)$ be an arbitrary ball. 
If $r \le  \frac12 |{x_0}|$, then 
since $w$ is approximately decreasing we get  that
\[ 
 \vint_B w(x) \, dx \simle w(|x_0|-r) \simeq w(|x_0|+r)
\simeq \essinf_{B} w.
\]
On the other hand, if $r> \frac12 |{x_0}|$, then
$B \subset B_{3r}$.
Hence using 
Lemma~\ref{lem-intprop} we get that
\[ 
 \vint_B w \, dx \simle  
 \frac{1}{r^{n}} \int_{B_{3r}} w \, dx 
\simeq \frac{1}{r^{n}} \int_0^{3r} \rho^{\alp+n-1} \phi(\rho)^\be \, d\rho
\simeq r^{\alpha} \phi(3r)^{\beta}. 
\] 
Moreover, 
\[ 
\essinf_{B} w \geq \essinf_{B_{3r}} w 
\simeq w(3r) \simeq r^{\alpha} \phi(3r)^{\beta},
\]
which shows that the $A_1$-condition 
is satisfied
for all balls.
\end{proof}

Using Proposition~\ref{prop-A1w} we obtain the following result
which is well known when $\be=0$, see 
Heinonen--Kilpel\"ainen--Martio~\cite[p.~10]{HeKiMa}.

\begin{cor}\label{cor-wadm}
Let $w(|x|)=|x|^{\alpha} \phi(|x|)^{\beta}$
be a weight on $\R^n$, $n \geq 2$, where $\alpha > -n$ 
and $\beta \in \R$. 
Then $w$ is $1$-admissible.
\end{cor}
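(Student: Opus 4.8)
The plan is to split the argument according to whether $w$ is an $A_1$-weight. By Proposition~\ref{prop-A1w}, $w$ is an $A_1$-weight exactly when $\al<0$ or $\al=0\le\be$, and in that case $w$ is $1$-admissible because $A_1$-weights are $1$-admissible (Section~\ref{sect-prel}); so the substantive case is the complementary one, where $\al>0$ (with $\be\in\R$) or $\al=0$ and $\be<0$. In this case $w(\ro')\simle w(\ro)$ whenever $0<\ro'\le\ro$, i.e.\ $w$ is approximately increasing, and also $w(\ro')\simeq w(\ro)$ whenever $\ro\le\ro'\le2\ro$; both follow from $\phi(\ro')\simeq\phi(\ro)$ for $\ro\le\ro'\le2\ro$ together with the monotonicity of $\ro\mapsto\ro^\al$ when $\al>0$, respectively of $\ro\mapsto\phi(\ro)^\be$ near $0$ when $\al=0$ and $\be<0$. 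I would record these facts first, as they drive everything else.

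Next I would verify that $\mu$ is doubling. Writing $\wt(\ro):=\ro^{n-1}w(\ro)\simeq\ro^{n-1+\al}\phi(\ro)^\be$ and noting that the exponent $a:=n+\al$ is positive (this is precisely where $\al>-n$ enters), Lemma~\ref{lem-intprop} gives $\mu(B_r)\simeq r^{n+\al}\phi(r)^\be$ for $0<r\le1$ and $\mu(B_r)\simeq r^{n}$ for $r\ge1$, so in particular $\mu(B_{2r})\simeq\mu(B_r)$ for all $r>0$. For an arbitrary ball $B=B(x_0,r)$ the approximate monotonicity of $w$ then yields $\mu(B(x_0,r))\simeq r^{n}w(\max\{r,|x_0|\})$, from which the full doubling property follows. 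This is a routine, if slightly tedious, estimate of the same flavour as Lemma~\ref{lem-intprop} and the proof of Proposition~\ref{prop-A1w}, and I would not dwell on it.

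The heart of the matter, and the step I expect to be the main obstacle, is the $1$-Poincar\'e inequality for $\mu$ on $\R^n$ in this complementary case; this is where the hypothesis $n\ge2$ is indispensable. In spherical coordinates $\mu$ is comparable to the radial measure $\ro^{n-1+\al}\phi(\ro)^\be\,d\ro\,d\sigma$ on the cone $(0,\infty)\times S^{n-1}$ over the unit sphere; since $n\ge2$, $S^{n-1}$ is a \emph{connected} compact Ahlfors $(n-1)$-regular space supporting a $1$-Poincar\'e inequality, and the radial density is a power with exponent $n-1+\al>-1$ times a slowly varying factor, so the cone measure is doubling. The plan is to obtain the $1$-Poincar\'e inequality from the corresponding statement for metric measure cones over connected Poincar\'e spaces — the natural habitat of the classical fact that $|x|^\al$ is $1$-admissible for $\al>-n$, $n\ge2$, and closely related to the gluing constructions in the spirit of Heinonen--Koskela and to the radial examples of \cite{BBLeh1} — and then to transport it back to $(\R^n,\mu)$ via the comparability of measures. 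Concretely this means decomposing $\R^n\setminus\{0\}$ into the dyadic shells $A_k=\{2^{-k-1}<|x|<2^{-k}\}$, on each of which $\mu$ is comparable to a multiple of Lebesgue measure so that every (slightly enlarged) shell, being an annulus in $\R^n$ with $n\ge2$, carries a $1$-Poincar\'e inequality with $k$-independent constants, and then amalgamating these scale by scale in the manner of the proof of Theorem~\ref{thm-PI-Xplus-to-Rn}. The delicate point is exactly the amalgamation for Poincar\'e balls that surround or nearly touch the origin: one is then forced to combine \emph{infinitely many} shells accumulating at $0$, so a naive iteration of Lemma~\ref{lem-semilocal} would ruin the constant; instead one must run the recursion scale by scale, feeding in the $1$-Poincar\'e inequality already obtained at each scale, and control the tail using the geometric decay $\mu(B_{2^{-k}})\simeq2^{-k(n+\al)}\phi(2^{-k})^\be$ (again an avatar of $\al>-n$) before letting the puncture $B_\eps$ shrink to $\{0\}$. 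That $n\ge2$ is genuinely required is visible from the breakdown at $n=1$, where each punctured shell is disconnected and $\{0\}$ separates $\R$; and $\al>-n$ is required since the statement already fails at $\al=-n$, where $\mu(B_r)$ decays only logarithmically.
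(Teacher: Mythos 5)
Your argument is correct in outline but takes a genuinely different route from the paper's, which disposes of the corollary in two lines: for $-n<\al\le 1-n$ (so $\al<0$, since $n\ge2$) it invokes Proposition~\ref{prop-A1w} together with the fact that $A_1$-weights are $1$-admissible, and for $\al>1-n$ it simply cites Remark~10.6 of \cite{BBLeh1}, where the $1$-admissibility of such radial weights is already established. You instead split along the $A_1$/non-$A_1$ boundary of Proposition~\ref{prop-A1w} and propose to prove the $1$-Poincar\'e inequality from scratch in the non-$A_1$ range ($\al>0$, or $\al=0>\be$) by a dyadic-shell decomposition. That route is viable, and you correctly locate both hypotheses: each rescaled shell is a fixed connected annulus precisely because $n\ge2$, with $w$ comparable to a constant on it, and the tail control $\sum_{k\ge j}\mu(A_k)=\mu(B_{2^{-j}})\simeq\mu(A_j)$ is exactly where $\al>-n$ enters. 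What this buys is a self-contained proof independent of \cite{BBLeh1}; what it costs is that the decisive step --- chaining infinitely many shells accumulating at the origin --- is only described, not carried out. As you observe, iterating Lemma~\ref{lem-semilocal} ruins the constant; what is actually needed is the standard telescoping argument: estimate $|u_{A_j}-u_{A_{j+1}}|\lesssim 2^{-j}\mu(A_j)^{-1}\int_{\lambda A_j}g\,d\mu$ via the Poincar\'e inequality on overlapping enlarged shells, compare $u_{B_r}$ with $u_{A_{k_0}}$ by summing these differences against $\mu(A_k)$, and interchange the order of summation using the geometric decay above. This is routine but is real work, and it is precisely the content of the citation the paper uses; your sketch should either carry it out or fall back on that reference. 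Your preliminary reductions (approximate monotonicity of $w$ in the non-$A_1$ range and the doubling of $\mu$ via Lemma~\ref{lem-intprop}) are correct as stated.
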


For $n=1$ this is false, since 
any $1$-admissible weight on $\R$ is an $A_1$-weight
by Theorem~2 in Bj\"orn--Buckley--Keith~\cite{BBK-Ap}.

\begin{proof}
  For $\alp>1-n$ this follows from
\cite[Remark~10.6]{BBLeh1},
while for $-n < \alp \le 1-n$ it follows
from Proposition~\ref{prop-A1w}.
\end{proof}

We next characterize when these types of weights satisfy
$\cpRn(\{0\}, B_r)>0$ resp.\ $\cpRn(\{0\}, B_R)\simeq r^{-p}\mu(B_r)$.
This again shows that the former condition can hold without the latter one,
at least when $p>1$, cf.\ Example~\ref{ex-3.2}.

\begin{prop}\label{prop-Cpp}
Let $w(|x|)=|x|^{\alpha} \phi(|x|)^{\beta}$
be a weight on $\R^n$, $n \geq 1
$,
with $\alpha > -n$ and $ \beta \in \R$. 
Then $\cpRn(\{0\}, B_r)>0$ for some, or equivalently all, $r>0$
if and only if one of the following conditions holds\textup{:}
\begin{enumerate}
\item \label{al-1}
$\alpha < p-n$, 
\item \label{al-2}
$\alpha = p-n$  and $\beta > p-1$,
\item \label{al-3}
$p=1$, $\alpha=1-n$ and $\be\ge0$.
\end{enumerate}
Moreover, with a fixed $0<R_0\le\infty$, we have
\[
\cpRn(\{0\}, B_r)\simge \frac{\mu(B_r)}{r^p}  \quad \text{for all } 0<r<R_0
\]
if and only if \ref{al-1} or \ref{al-3} holds.
\end{prop}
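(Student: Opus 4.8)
The plan is to read everything off from the explicit capacity formula in Proposition~\ref{prop-est-cap-p-ny} together with the integral estimate in Lemma~\ref{lem-intprop}. With $\what(\rho)=\om_{n-1}\rho^{\alpha+n-1}\phi(\rho)^\beta$, spherical coordinates give $\mu(B_r)=\int_0^r\what(\rho)\,d\rho$, and since $\alpha+n>0$, Lemma~\ref{lem-intprop} yields $\mu(B_r)\simeq\om_{n-1}r^{\alpha+n}\phi(r)^\beta$ for all $r>0$; this identity is used throughout, and in particular $\mu(B_r)/r^p\simeq\om_{n-1}r^{\alpha+n-p}\phi(r)^\beta$.

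For the first assertion I treat $p>1$ and $p=1$ separately. When $p>1$, Proposition~\ref{prop-est-cap-p-ny} shows that $\cpRn(\{0\},B_r)>0$ precisely when $\int_0^r\what^{1/(1-p)}\,d\rho<\infty$, and $\what^{1/(1-p)}\simeq\rho^{(\alpha+n-1)/(1-p)}\phi(\rho)^{\beta/(1-p)}$, so Lemma~\ref{lem-intprop} with $a=(\alpha+n-p)/(1-p)$ and $b=\beta/(1-p)$ gives finiteness exactly when $a>0$, or $a=0$ and $b<-1$; because $1-p<0$, these translate into $\alpha<p-n$, respectively $\alpha=p-n$ and $\beta>p-1$, i.e.\ \ref{al-1} or \ref{al-2}. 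When $p=1$, $\cpRn(\{0\},B_r)=\om_{n-1}\essinf_{0<\rho<r}\rho^{\alpha+n-1}\phi(\rho)^\beta$, and since the integrand is continuous, positive and bounded away from $0$ on compact subintervals of $(0,r]$, this is positive iff $\liminf_{\rho\to0^+}\rho^{\alpha+n-1}\phi(\rho)^\beta>0$; as $\phi(\rho)\to\infty$ only like a power of $\log(1/\rho)$, the liminf is positive iff $\alpha+n-1<0$, or $\alpha+n-1=0$ and $\beta\ge0$, i.e.\ \ref{al-1} or \ref{al-3} (and for $p=1$ condition~\ref{al-2} reads $\alpha=1-n$, $\beta>0$, which is subsumed by~\ref{al-3}). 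In both cases the criterion involves only the behaviour near $0$ and is visibly independent of $r$, which gives the ``for some, or equivalently all'' clause.

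For the second assertion I again split on $p$ and on the three conditions. In case~\ref{al-1} with $p>1$, Lemma~\ref{lem-intprop} gives $\int_0^r\what^{1/(1-p)}\simeq r^{(\alpha+n-p)/(1-p)}\phi(r)^{\beta/(1-p)}$, and raising to the power $1-p$ yields $\cpRn(\{0\},B_r)\simeq r^{\alpha+n-p}\phi(r)^\beta\simeq\mu(B_r)/r^p$; in case~\ref{al-1} with $p=1$ the exponent $\alpha+n-1$ is negative, so $\what$ is approximately decreasing on $(0,\infty)$ with a constant depending only on $\alpha,\beta,n$, whence $\cpRn(\{0\},B_r)=\om_{n-1}\essinf_{(0,r)}\what\simeq\what(r)\simeq\mu(B_r)/r$. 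In case~\ref{al-3}, $\what(\rho)=\om_{n-1}\phi(\rho)^\beta$ is decreasing, so $\cpRn(\{0\},B_r)=\om_{n-1}\phi(r)^\beta$, which by Lemma~\ref{lem-intprop} is comparable to $\mu(B_r)/r$. It remains to rule out the other possibilities. In case~\ref{al-2}, the same computation gives $\int_0^r\what^{1/(1-p)}\simeq\phi(r)^{\beta/(1-p)+1}$ for $r\le1$, hence $\cpRn(\{0\},B_r)\simeq\phi(r)^{\beta+1-p}$, while $\mu(B_r)/r^p\simeq\phi(r)^\beta$; since $p>1$ the quotient $\phi(r)^{1-p}$ tends to $0$ as $r\to0^+$, so the estimate $\cpRn(\{0\},B_r)\simge\mu(B_r)/r^p$ fails. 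When none of \ref{al-1}--\ref{al-3} holds, the first part gives $\cpRn(\{0\},B_r)=0<\mu(B_r)/r^p$, so the estimate fails there too; hence it holds exactly under \ref{al-1} or \ref{al-3}.

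The only genuinely fiddly point is showing, for $p=1$ in case~\ref{al-1}, that $\what(\rho)=\om_{n-1}\rho^{\alpha+n-1}\phi(\rho)^\beta$ is approximately decreasing on all of $(0,\infty)$ with an $r$-independent constant, since when $\beta<0$ the function is genuinely non-monotone near the point where $\rho^{\alpha+n-1}\phi(\rho)^\beta$ turns around and where $\phi$ changes regime at $\rho=e^{-1}$. I would handle this via the elementary sub-multiplicativity $\phi(\rho)\simle\phi(\rho/r)\phi(r)$ for $0<\rho\le r$, so that $\what(\rho)/\what(r)\simge(\rho/r)^{\alpha+n-1}\phi(\rho/r)^\beta$, and then note $\inf_{0<s\le1}s^{\alpha+n-1}\phi(s)^\beta>0$ because $\alpha+n-1<0$ makes the power dominate the logarithm. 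Everything else is a mechanical substitution into Lemma~\ref{lem-intprop}.
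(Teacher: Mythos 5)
Your proof is correct and follows essentially the same route as the paper: reading off both assertions from the explicit capacity formula of Proposition~\ref{prop-est-cap-p-ny} combined with Lemma~\ref{lem-intprop}, splitting into $p>1$ and $p=1$. The only difference is that you supply a detail the paper states without proof, namely that for $p=1$ under condition~(a) the essential infimum $\essinf_{(0,r)}\what$ is comparable to $\what(r)$; your submultiplicativity argument $\phi(\rho)\simle\phi(\rho/r)\phi(r)$ handles this correctly.
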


\begin{proof}
Since $\al+n>0$, Lemma~\ref{lem-intprop} shows that 
$r^{-p}\mu(B_r)\simeq r^{\al+n-p}\phi(r)^\be$ for all $\be\in\R$ and $r>0$.
To estimate $\cpRn(\{0\}, B_r)$ assume first that $p>1$.
It follows from 
Proposition~\ref{prop-est-cap-p-ny}
and Lemma~\ref{lem-intprop} that
\begin{align*}
\cpRn(\{0\}, B_r) 
&\simeq  \biggl(\int_0^{r} \rho^{(\alpha+n-1)/(1-p)} \phi(\rho)^{\beta/(1-p)} \, d\rho\biggr)^{1-p}
\\
&\simeq   
\begin{cases}
r^{\alpha + n - p}\phi(r)^{\beta}, & \text{if } \alpha < p-n, \\
\phi(r)^{\beta+1-p}, & \text{if }\alpha = p-n, \ \beta > p-1 \text{ and }  r\le1, \\
(1+\log r)^{1-p}, & \text{if }\alpha = p-n, \ \beta > p-1 \text{ and }  r>1, \\
0,  & \text{otherwise}, \\
\end{cases} 
\end{align*}
which proves the statement
when $p>1$.

Assume for the rest of the proof that $p=1$.
Proposition~\ref{prop-est-cap-p-ny} implies that for all $r>0$,
\[
\cpRn(\{0\}, B_r) \simeq \essinf_{0<\rho<r} \rho^{\al+n-1}\phi(r)^\be.
\]
Thus, $\cpRn(\{0\}, B_r)>0$ if and only if either $\al<1-n$ or 
$\al=1-n$ and $\be\ge0$.
In this case also
\[
\cpRn(\{0\}, B_r) \simeq r^{\al+n-1}\phi(r)^\be \simeq \frac{\mu(B_r)}{r}, 
\]
which concludes the proof.
\end{proof}

\end{document}